\documentclass[11pt,reqno]{amsart}
\usepackage{amsmath,amssymb,amsthm,amsfonts,mathtools}
\usepackage[margin=1.2in]{geometry}
\usepackage{microtype}
\usepackage{needspace}
\usepackage{comment}
\usepackage[%backref=page,
colorlinks=true,urlcolor=blue,linkcolor=blue,citecolor=magenta]{hyperref}
\usepackage{amsrefs}
\hypersetup{
 pdftitle={Alexandrov-Fenchel inequalities for convex domains in the sphere},
 pdfsubject={Spherical quermassintegrals and a globally constrained curvature flow},
 pdfkeywords={Alexandrov-Fenchel, sphere, quermassintegral, curvature flow, polar duality}
}
\allowdisplaybreaks
\numberwithin{equation}{section}

\newtheorem{theorem}{Theorem}[section]
\newtheorem{proposition}[theorem]{Proposition}
\newtheorem{lemma}[theorem]{Lemma}

\theoremstyle{remark}

\begin{document}

\title[Alexandrov--Fenchel inequalities in the sphere]{Alexandrov--Fenchel inequalities for convex domains in the sphere}

\author[T. Luo]{Tianci Luo}
\author[Y. Wei]{Yong Wei}
\author[R. Zhou]{Rong Zhou}
\address{School of Mathematical Sciences, University of Science and Technology of China, Hefei 230026, P.R. China}
\email{Luo\_tianci@mail.ustc.edu.cn}
\email{yongwei@ustc.edu.cn}
\email{zhourong@mail.ustc.edu.cn}

\date{\today}
\subjclass[2020]{53E10, 52A39, 35K55}
\keywords{Alexandrov--Fenchel inequality, quermassintegral, constrained curvature flow, convex hypersurface, polar duality}

\begin{abstract}
We prove the Alexandrov--Fenchel inequalities between any two
spherical quermassintegrals for smooth weakly convex domains contained in an open hemisphere, with equality if and only if the domain is a geodesic ball. For strictly convex hypersurfaces, we introduce a globally constrained curvature flow which preserves one quermassintegral and decreases the next one. We establish uniform curvature estimates by combining a pinching estimate, a support function argument, and spherical polarity. As a consequence, the flow exists for all time and converges smoothly and exponentially to a geodesic sphere. The monotonicity of the quermassintegrals gives the full family of Alexandrov--Fenchel inequalities. A short-time mean curvature flow approximation and a localized rigidity argument extend the result, including the equality characterization, to weakly convex domains.
\end{abstract}

\maketitle
\tableofcontents

\section{Introduction}\label{sec:intro}
The Alexandrov--Fenchel inequalities are fundamental inequalities in
convex geometry. In Euclidean space, they give sharp comparisons
between the quermassintegrals of a convex body, with equality
precisely for balls. We refer to Schneider \cite{Schneider} for the
classical theory. Curvature flows provide an important approach to
these inequalities by deforming a hypersurface towards a sphere while
preserving one geometric quantity and decreasing another. Important
examples include the volume-preserving mean curvature flow of Huisken
\cite{Huisken}, the mixed-volume-preserving curvature flows of McCoy
\cite{McCoy}, and the inverse curvature flow of Guan and Li
\cite{GuanLi2009}. In particular, Guan and Li proved the
quermassintegral inequalities for suitable $k$-convex star-shaped
domains in Euclidean space.

For convex domains in space forms, the quermassintegrals are naturally
defined by integral geometry. Their curvature integral expressions
contain additional lower order terms arising from the ambient
curvature. We refer to Santal\'o \cite{Santalo} and Solanes
\cites{SolanesThesis,Solanes2006} for the corresponding integral
geometric formulas.

Let $\mathbb N^{n+1}(K)$ denote the simply connected space form of constant sectional curvature $K\in\{-1,0,1\}$. Let $\Omega\subset\mathbb N^{n+1}(K)$ be a smooth bounded domain with boundary $M=\partial\Omega$, and let $\sigma_k(\kappa)$ denote the $k$th elementary symmetric function of the principal curvatures $\kappa =(\kappa_1,\ldots,\kappa_n)$ of $M$ with respect to the outward unit normal $\nu$. We use the normalization of the quermassintegrals adopted in \cites{BGL,CGLS}:
\begin{equation}\label{eq:Adefinition}
\begin{aligned}
 \mathcal A_{-1}(\Omega)
 &=\operatorname{Vol}(\Omega),
 \qquad
 \mathcal A_0(\Omega)=|M|,\\
 \mathcal A_1(\Omega)
 &=\int_M\sigma_1(\kappa)\,d\mu
   +nK\mathcal A_{-1}(\Omega),\\
 \mathcal A_k(\Omega)
 &=\int_M\sigma_k(\kappa)\,d\mu
   +K\frac{n-k+1}{k-1}\mathcal A_{k-2}(\Omega),
 \qquad 2\leq k\leq n.
\end{aligned}
\end{equation}
Thus $K=0$, $K=-1$, and $K=1$ give the Euclidean, hyperbolic, and
spherical quermassintegrals, respectively.

For a geodesic ball $B_r\subset\mathbb N^{n+1}(K)$, set
\begin{equation}\label{eq:ballfunctions}
 f_{j}(r)=\mathcal A_j(B_r).
\end{equation}
The Alexandrov--Fenchel problem is to establish the sharp comparison
\begin{equation}\label{eq:introAF}
 \mathcal A_k(\Omega)
 \geq
 f_{k}\circ f_{\ell}^{-1}
 \bigl(\mathcal A_\ell(\Omega)\bigr),
 \qquad
 -1\leq\ell<k\leq n-1,
\end{equation}
with equality precisely for geodesic balls.

We first recall the hyperbolic case. A smooth bounded domain in $\mathbb H^{n+1}=\mathbb N^{n+1}(-1)$ is called horospherically convex if its principal curvatures satisfy $\kappa_i\geq1$. Wang and Xia \cite{WangXia} proved the full family of Alexandrov--Fenchel inequalities \eqref{eq:introAF} under this assumption. Hu, Li, and the second author \cite{HuLiWei} later gave another proof by locally constrained curvature flows. Related hyperbolic Alexandrov--Fenchel inequalities were obtained by Ge, Wang, and Wu \cite{GeWangWu}.

Several important cases have been established under weaker convexity assumptions. Brendle, Guan, and Li \cite{BGL} proved \eqref{eq:introAF} for convex domains with $-1\le \ell<k=n-1$. Andrews, Chen, and the second author \cite{AndrewsChenWei} proved \eqref{eq:introAF} for $\ell=-1$ and $0\le k\le n-1$ when the domain is convex and its boundary has positive intrinsic sectional curvature. More recently, Hu and Li \cite{HuLi2023} used the duality between hyperbolic and de Sitter space to prove \eqref{eq:introAF} for smooth strictly convex domains with $-1\le \ell<k=n-2$. Their duality formulas also yield Blaschke--Santal\'o-type inequalities in space forms.

The spherical problem is less complete. Guan and Li \cite{GuanLi} introduced a locally constrained mean curvature type flow in space forms which preserves the enclosed volume and decreases the surface area. Brendle, Guan, and Li \cite{BGL} introduced an inverse curvature type flow adapted to higher quermassintegrals. For smooth strictly
convex domains in $\mathbb S^{n+1}$, they proved \eqref{eq:introAF} for $-1\leq\ell<k=n-1$ by establishing long-time existence and
exponential convergence of the flow associated with the harmonic mean curvature quotient. For the remaining curvature quotients, a uniform positive lower bound for the quotient is not available from their spherical estimates, see \cite{GuanLiSurvey}*{Section~4.2}.

Chen, Guan, Li, and Scheuer \cite{CGLS} studied another fully nonlinear locally constrained flow in the sphere. They proved preservation of convexity and two-sided bounds for the curvature quotient. Their estimates, however, do not provide a uniform bound for the full second fundamental form. See also \cite{GuanLiSurvey}*{Section~4.2}.

Other special cases of the spherical Alexandrov--Fenchel inequalities have been obtained by different methods. Chen and Sun \cite{ChenSun} proved \eqref{eq:introAF} for $\ell=k-2$, $1\leq k\leq n-1$, for embedded, closed, connected convex $C^2$-hypersurfaces contained in an open hemisphere. Makowski and Scheuer \cite{MakowskiScheuer} and the second author with Xiong \cite{WeiXiong} obtained related sharp inequalities for convex hypersurfaces by curvature flow methods. More recently, Chen \cite{Chen2025} established further inequalities between curvature integrals and quermassintegrals for embedded, closed, connected convex $C^2$-hypersurfaces.

There has also been recent progress based on quermassintegral-preserving flows. Cabezas-Rivas and Scheuer \cite{CabezasScheuer} constructed such a mean curvature flow for arbitrary smooth strictly convex hypersurfaces in the sphere and proved convergence to a geodesic sphere. The convergence of this flow does not by itself give the monotonicity of the next quermassintegral needed for the adjacent Alexandrov--Fenchel inequalities. Pan and Scheuer \cite{PanScheuer} introduced spherical horo-convexity and proved the full family \eqref{eq:introAF} under this stronger convexity assumption. Albert-Nicl\`os, Cabezas-Rivas, and Pan \cite{AlbertCabezasPan} subsequently proved long-time existence and exponential convergence for fully nonlinear quermassintegral-preserving flows in the same class.

The results above leave open the full family of Alexandrov--Fenchel inequalities for general convex domains in the sphere without a horo-convexity assumption. The purpose of this paper is to prove the full family for smooth weakly convex domains contained in an open hemisphere.

\subsection{Main results}\label{subsec:main}
From now on, the ambient space is $\mathbb S^{n+1}=\mathbb N^{n+1}(1)$. A smooth weakly convex domain $\Omega\subset\mathbb S^{n+1}$ means a compact geodesically convex body with nonempty interior, contained in an open hemisphere, whose boundary is smooth and has nonnegative principal curvatures $\kappa =(\kappa_1,\ldots,\kappa_n)$ with respect to the outward unit normal $\nu$. It is called strictly convex if all its principal curvatures are positive.

Our main result is the following Alexandrov--Fenchel inequality.

\begin{theorem}\label{thm:AF}
Let $\Omega\subset\mathbb S^{n+1}$ be a smooth weakly convex domain, where $n\geq2$. For every pair of integers $-1\leq \ell<k\leq n-1$,
\begin{equation}\label{eq:AF}
 \mathcal{A}_k(\Omega)\geq f_k \circ f_\ell^{-1}(\mathcal{A}_\ell(\Omega)).
\end{equation}
Equality holds if and only if $\Omega$ is a geodesic ball.
\end{theorem}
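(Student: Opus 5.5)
We follow the strategy outlined in the abstract: first prove \eqref{eq:AF} for strictly convex $\Omega$ by a globally constrained curvature flow, then pass to weakly convex domains by approximation. Two reductions come first. Since each $f_j$ is strictly increasing in $r$ on $(0,\pi/2)$ (the relevant range for domains in an open hemisphere), the inequalities compose: $\mathcal A_{\ell+1}\ge f_{\ell+1}\circ f_\ell^{-1}(\mathcal A_\ell)$ and $\mathcal A_{\ell+2}\ge f_{\ell+2}\circ f_{\ell+1}^{-1}(\mathcal A_{\ell+1})$ give the $(\ell,\ell+2)$ case, and so on. It therefore suffices to treat adjacent pairs $k=\ell+1$. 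Second, we use the variational formula
\[
\frac{d}{dt}\mathcal A_j=(j+1)\int_M \sigma_{j+1}\,F\,d\mu \qquad (\text{with }\sigma_0=1)
\]
for a normal variation with speed $F$. This follows from \eqref{eq:Adefinition} by the standard spherical computation, where the lower order terms are chosen exactly so that this holds.

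For strictly convex $M$ and fixed $0\le \ell+1\le n-1$, we consider the flow
\[
\partial_t X=\Bigl(\phi(t)-\frac{\sigma_{\ell+2}}{\sigma_{\ell+1}}\Bigr)\nu,\qquad \phi(t)=\frac{\int_M\sigma_{\ell+2}\,d\mu}{\int_M\sigma_{\ell+1}\,d\mu}.
\]
For $\ell=-1$ we use $\sigma_1/\sigma_0$ with the volume-preserving normalization. By construction $\mathcal A_\ell$ is preserved. The second variation is
\[
\frac{d}{dt}\mathcal A_{\ell+1}=(\ell+2)\int_M\sigma_{\ell+2}\Bigl(\phi-\frac{\sigma_{\ell+2}}{\sigma_{\ell+1}}\Bigr)\le 0,
\]
which follows from Cauchy--Schwarz, with equality iff $\sigma_{\ell+2}/\sigma_{\ell+1}$ is constant, i.e.\ iff $M$ is a geodesic sphere by the spherical Alexandrov theorem. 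Granting long-time existence and smooth convergence to a sphere $\partial B_r$ with $f_\ell(r)=\mathcal A_\ell(\Omega)$, we get $\mathcal A_{\ell+1}(\Omega)\ge f_{\ell+1}(r)$. If equality holds, the monotone quantity is constant, so $\Omega$ was already a ball.

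The analytic core, and the main obstacle, is the set of a priori estimates. The steps are as follows.
\begin{enumerate}
\item Preservation of strict convexity and of containment in a fixed open hemisphere. For the latter we use a maximum principle on the support function $u=\langle X,\nu\rangle$-type quantity relative to a center, or equivalently an inradius/outradius bound from the preserved $\mathcal A_\ell$ and the decreasing $\mathcal A_{\ell+1}$.
\item A uniform upper bound on $\sigma_{\ell+2}/\sigma_{\ell+1}$ and on $\phi$. This follows from the maximum principle applied to $F/(u-c)$ (Tso's trick), using the support function lower bound from an inner ball.
\item A pinching estimate showing that the ratio $\kappa_{\max}/\kappa_{\min}$ stays controlled, obtained by a tensor maximum principle exploiting the concavity of the quotient and the positive ambient curvature term.
\item A lower bound on the principal curvatures, obtained from spherical polarity. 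The polar hypersurface $M^*$ has principal curvatures $1/\kappa_i$, so a lower bound for $\kappa$ is an upper bound for $\kappa^*$. Under duality the flow becomes a flow of $M^*$ with speed built from $\sigma_{n-\ell-2}/\sigma_{n-\ell-1}$ of $\kappa^*$, and we rerun the Tso-type bound there.
\end{enumerate}
I expect step (4), namely the lower curvature bound / uniform $|A|$ bound that earlier approaches lacked, to be hardest. My first attempt would combine steps (2) and (3): a bound on $F$ plus pinching gives $\kappa_{\max}\lesssim\kappa_{\min}$, and then the polar bound closes the estimate. Once these bounds hold, Krylov--Safonov and Schauder estimates give all higher derivatives. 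Long-time existence and subsequential convergence follow from the monotone integral and the rigidity above, and linearization at spheres upgrades this to exponential convergence.

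Finally, let $\Omega$ be weakly convex. Run mean curvature flow for a short time $\varepsilon$. By the strong maximum principle this makes the boundary strictly convex unless a flat factor splits off, and flatness is impossible for a closed hypersurface in a hemisphere. The quermassintegrals are continuous in $\varepsilon$, so the inequality passes to the limit. For equality we need the localized rigidity argument. Assume equality holds with $\Omega$ weakly convex. Perturb $\Omega$ inward along a compactly supported normal variation near a point where some $\kappa_i>0$ (or where $\sigma_{\ell+2}/\sigma_{\ell+1}$ is nonconstant), keeping $\mathcal A_\ell$ fixed, so that $\mathcal A_{\ell+1}$ decreases to first order. This is possible unless $\sigma_{\ell+2}=c\,\sigma_{\ell+1}$ on $M$, and otherwise it would violate the inequality for nearby convex domains. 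Hence $\sigma_{\ell+2}=c\,\sigma_{\ell+1}$ on $M$, with $c>0$ at elliptic points, and an Alexandrov-type argument gives that $M$ is a geodesic sphere.
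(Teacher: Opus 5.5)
\textbf{The gap is in your choice of flow.} Your architecture matches the paper's: reduction to adjacent pairs, a quermassintegral-preserving flow, pinching plus a Tso bound, polarity for the lower curvature bound, mean curvature flow smoothing, and a local variational argument for equality. But your flow breaks the steps that carry the weight.

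With the additive normalization $\partial_tX=(\phi-F)\nu$, $\phi=\int\sigma_{\ell+2}/\int\sigma_{\ell+1}$, monotonicity is fine. This is McCoy's flow, and $\frac{d}{dt}\mathcal A_{\ell+1}=-(\ell+2)\int\sigma_{\ell+1}(F-\phi)^2\,d\mu$. However, the flow is not self-dual. Since $\partial_tY\cdot N=-\nu\cdot\partial_tX$ and $F=1/F^*$ up to a constant, the polar hypersurface moves by $\partial_tY=(1/F^*-\phi)N$. This is an inverse-curvature-type flow, and no time change brings it back to the form $\phi^*-F^*$. For it, the Tso quantity $F^*/(u^*-d)$ has no cubic absorbing term:
\begin{itemize}
\item the leading coefficient $\dot F^{*ij}/F^{*2}$ is homogeneous of degree $-2$, so the support-function contribution is only linear in $F^*$;
\item meanwhile $(\partial_t-\mathcal L)F^*$ contains the positive term $\phi\,\dot F^{*ij}(h^2)_{ij}\sim\phi F^{*2}$.
\end{itemize}
So ``rerunning the Tso bound'' gives no upper bound for $\mathcal W^*$, hence no lower bound for $\mathcal W$. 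That is exactly the step you flag as the crux.

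\textbf{What the paper does instead.} It uses the multiplicative normalization $\partial_tX=(\phi/F-F)\nu$, with $F=E_{k+1}/E_k$ and $\phi=\int E_kF\,d\mu/\int E_kF^{-1}\,d\mu$.
\begin{itemize}
\item It still preserves $\mathcal A_{k-1}$ and decreases $\mathcal A_k$: the integrand against $E_k\,d\mu$ becomes $-(F+\sqrt\phi)(F-\sqrt\phi)^2/F$.
\item After the time change $d\tau=\phi\,dt$, the polar evolution is $\partial_\tau Y=(\phi^*/F^*-F^*)N$ with $\phi^*=1/\phi$, and it preserves $\mathcal A_{n-k-1}$.
\item In the Tso computation every $\phi$-term has a favorable sign; the $\cos r$ term is $-2F\cos r$, free of $\phi$.
\end{itemize}
Hence a single upper estimate, independent of any bound on $\phi$, applies to both the flow and its polar, giving $c\operatorname{Id}\le\mathcal W\le C\operatorname{Id}$. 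Bounds on $\phi$ come only afterwards.

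\textbf{Your steps (1) and (3) also fail for the additive flow.} The zeroth-order ambient term in $(\partial_t-\mathcal L)h^i{}_j$ is $(2F-\phi)\delta^i_j$, not the paper's $2F\delta^i_j$. So where $\kappa_1$ touches $0$ (or $\varepsilon H$ with $H$ small) while $F<\phi/2$, the reaction term can have the wrong sign. Concavity and inverse-concavity of $F-\phi$ still control the gradient terms. But the maximum principle no longer yields preservation of convexity or the pinching $h_{ij}\ge\varepsilon Hg_{ij}$. Without pinching you also lose the inradius bound (via the Cabezas-Rivas--Scheuer radius comparison) that the Tso argument needs. For $\ell\ge0$, a preserved $\mathcal A_\ell$ and a bounded $\mathcal A_{\ell+1}$ do not exclude thin domains.

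\textbf{The weakly convex equality case is incomplete.} To conclude that the first variation vanishes, you need variations admissible for both signs of $s$. These must be supported where $\mathcal W>0$, not merely where some $\kappa_i>0$. That yields $E_{k+1}=cE_k$ only on the open set $M_+=\{\mathcal W>0\}$, not on all of $M$. You still need $M_+$ to be closed. The paper argues at a point of $\overline{M_+}\setminus M_+$ with a zero eigenvalue:
\begin{itemize}
\item the Brendle--Choi--Daskalopoulos lower test function with $\varphi\equiv0$, together with inverse concavity of $F$, gives $\dot F^{ij}\nabla_i\nabla_jh_{11}+\ddot F^{ij,pq}\nabla_1h_{ij}\nabla_1h_{pq}\ge0$;
\item the Simons identity then forces $0=\nabla_1\nabla_1F\ge F=c>0$, a contradiction.
\end{itemize}
Only once $M=M_+$ is known does Koh's theorem apply.
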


Notice that only weak convexity is assumed in Theorem~\ref{thm:AF}. The restriction $k\leq n-1$ is natural since $\mathcal A_n(\Omega)=|\mathbb S^n|$.

For strictly convex domains, Theorem~\ref{thm:AF} follows from a new globally constrained curvature flow. Let
\begin{equation*}
 E_k(\kappa)=\binom nk^{-1}\sigma_k(\kappa),
 \qquad
 E_0=1.
\end{equation*}
Fix $k\in\{0,\ldots,n-1\}$. Starting from a smooth strictly convex
embedding $X_0:M^n\to\mathbb S^{n+1}$, we consider
\begin{equation}\label{eq:mainflow}
\begin{gathered}
 \partial_tX=\left(\frac{\phi(t)}F-F\right)\nu,\qquad X(\cdot,0)=X_0,\\
 F=\frac{E_{k+1}(\kappa)}{E_k(\kappa)},\qquad
 \phi(t)=
 \frac{\displaystyle\int_{M_t}E_k(\kappa)F\,d\mu_t}
      {\displaystyle\int_{M_t}E_k(\kappa)/F\,d\mu_t}.
\end{gathered}
\end{equation}
By the choice of $\phi(t)$, the flow \eqref{eq:mainflow} preserves $\mathcal A_{k-1}$ and decreases $\mathcal A_k$. Standard parabolic theory gives a unique smooth solution for a short time for any smooth strictly convex initial hypersurface, see \cite{HuiskenPolden}. We prove the following long-time existence and convergence theorem.

\begin{theorem}\label{thm:flow}
Let $M^n$ be a closed manifold, with $n\geq2$, and let $X_0:M^n\to\mathbb S^{n+1}$ be a smooth embedding such that $M_0=X_0(M)$ bounds a smooth strictly convex domain $\Omega_0$. For each $k\in\{0,\ldots,n-1\}$, the flow \eqref{eq:mainflow} with initial value $X_0$ has a unique smooth solution for all $t\in[0,\infty)$. Each $M_t=X(M,t)$ is strictly convex, and $M_t$ converges smoothly and exponentially as $t\to\infty$ to a geodesic sphere of radius $R_\infty\in(0,\pi/2)$ determined by
\begin{equation*}
 f_{k-1}(R_\infty)=\mathcal A_{k-1}(\Omega_0).
\end{equation*}
\end{theorem}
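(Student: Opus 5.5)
The plan is to run the standard a priori estimate program for the flow \eqref{eq:mainflow}: derive geometric bounds from the monotone quantities, then curvature bounds, then invoke parabolic regularity and a convergence argument.

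\textbf{Step 1: monotonicity and containment.} Using the variational formula $\frac{d}{dt}\mathcal A_j=(j+1)\binom{n}{j+1}^{-1}\!\cdots\int_{M_t}E_{j+1}\,\partial_tX\cdot\nu$ (suitably normalized), the choice of $\phi(t)$ gives $\frac{d}{dt}\mathcal A_{k-1}=c\int(E_k\phi/F-E_kF)=0$. For $\mathcal A_k$ we get $\frac{d}{dt}\mathcal A_k=c\int E_{k+1}(\phi/F-F)=c\int E_k(\phi-F^2)$. Since $\phi=\int E_kF/\int E_k/F$, Cauchy--Schwarz together with the Newton--Maclaurin inequality shows that this is $\le0$, with equality only if $F$ is constant. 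Preservation of strict convexity should follow from a maximum principle for the smallest principal curvature. This uses the concavity and inverse-concavity of $F=E_{k+1}/E_k$ and the favorable ambient term $+|A|^2$-type contribution in $\mathbb S^{n+1}$. Since $\mathcal A_{k-1}$ is fixed and $\mathcal A_k$ is bounded, convex domains stay in a fixed compact subset of an open hemisphere, with inradius bounded below and outer radius bounded above. This can be argued by Blaschke selection and the fact that the quermassintegrals degenerate at the boundary of the hemisphere.

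\textbf{Step 2: support function and curvature bounds.} Center at the incenter $p$ on a time interval (Tso's trick). The spherical support function $u=\langle \sin r\,\partial_r,\nu\rangle$ is then bounded below by a positive constant on short intervals. Next I bound $F$ from above by applying the maximum principle to $\frac{\phi/F-F}{u-\varepsilon}$ or to $\frac{F}{u-\varepsilon}$. For this one needs $\phi(t)$ bounded, and $\phi$ is controlled by the geometric bounds via Hölder's inequality. A lower bound on $F$ would come from spherical polarity. The polar hypersurface $M_t^*$ has principal curvatures $1/\kappa_i$, and the flow becomes a flow of the same form with speed involving $F^*=\sigma_n/\sigma_{n-1}\cdot\ldots$. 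The needed bound is an upper bound for the dual quotient, obtained by the same support-function argument on the polar body, which also lies in a hemisphere with controlled radii. Finally, a pinching estimate bounds $\kappa_{\max}/\kappa_{\min}$, or equivalently $\kappa_{\max}$ given bounds on $F$. I would try the maximum principle for $\log\kappa_{\max}-\log F$ or a function $G(\kappa)/F$ built from the inverse-concavity, which converts the two-sided bound on $F$ into two-sided bounds on all $\kappa_i$.

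\textbf{Step 3: regularity and convergence.} With uniform bounds on the curvatures, the equation is uniformly parabolic and concave in the second derivatives, after using inverse-concavity in the form $-1/F$. Krylov--Safonov and Evans--Krylov then give $C^{2,\alpha}$ estimates, and Schauder estimates give all higher derivatives, so the solution exists for all time. Integrating the monotonicity of $\mathcal A_k$ gives $\int_0^\infty\int E_k(F^2-\phi)\,dt<\infty$. Combined with the uniform smooth bounds, this forces $F\to$ const along subsequences, and by Alexandrov-type rigidity for constant $E_{k+1}/E_k$ the limit is a geodesic sphere. The radius is fixed by $f_{k-1}(R_\infty)=\mathcal A_{k-1}(\Omega_0)$, and $R_\infty<\pi/2$ holds because of the hemisphere containment. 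Exponential convergence follows by linearizing around the sphere, where the linearized operator has a spectral gap after removing the conformal modes, which correspond to the quermassintegral constraint and to rotations of centers.

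\textbf{Main obstacle.} The lower bound on $F$, equivalently the upper bound on curvature through polarity, is the hardest step. This is exactly where earlier spherical flows failed. I expect the difficulty is that the global term $\phi/F$ dualizes correctly only because of its specific form. I would first verify this carefully: under polarity, $\phi/F-F$ should become $\phi F^*-1/F^*$ for a suitably defined $F^*$, with the roles of the two terms swapped.
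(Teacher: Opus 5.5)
Your outline (monotonicity, a Tso-type upper bound, polarity for the lower bound, regularity, convergence) follows the paper's, but two steps fail as set up. First, the geometric control in Step 1 does not follow from the quermassintegrals. For $k\geq1$, fixing $\mathcal A_{k-1}$ and bounding $\mathcal A_k$ does not bound the inradius from below. Thin neighbourhoods of a totally geodesic $n$-disc of radius $R<\pi/2$ have $\mathcal A_{k-1}$ and $\mathcal A_k$ converging to positive values below $f_{k-1}(\pi/2)$ and $f_k(\pi/2)$, while $\rho_-\to0$. Blaschke selection only gives a possibly lower-dimensional limit. So the bound $u\geq c>0$ your Tso argument needs is not available at that stage.

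The paper reverses the order. It first proves the pinching $h_{ij}\geq\varepsilon_0Hg_{ij}$ directly from the flow, via Andrews' algebraic estimate and the Brendle--Choi--Daskalopoulos lower test function. The Cabezas-Rivas--Scheuer radius comparison then turns this ratio bound and the preserved $\mathcal A_{k-1}$ into $\rho_-(\Omega_t)\geq r_0$. The circumradius bound comes last, from the polar inradius and $\rho_-(\Omega^*)=\pi/2-\rho_+(\Omega)$. For this and for convexity preservation, the function that must be concave and inverse-concave is the speed $G=F-\phi/F$, not $F$. Indeed $\ddot G$ contains the extra term $-2\phi F^{-3}\dot F\otimes\dot F$. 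The property holds because $-G_a(A^{-1})=aF_*(A)-F_*(A)^{-1}$ with $F_*=E_{n-k}/E_{n-k-1}$ concave.

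Second, the polar step is circular as planned. You say the Tso bound needs $\phi$ bounded, via H\"older. But the polar evolution has the same form only after the time change $d\tau=\phi\,dt$, and its coefficient is then $\phi^*=1/\phi$. The same argument on the polar body would therefore need a lower bound for $\phi=\int E_{k+1}\,d\mu/\int E_kF^{-1}\,d\mu$. That is an upper bound for $\int E_kF^{-1}\,d\mu$, which is essentially the lower curvature bound you are trying to prove; Cauchy--Schwarz only bounds $\phi$ from above.

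The missing idea is that no bound on $\phi$ is needed. At a spatial maximum of $Z=F/(u-d)$, the $\phi$-terms appear as $-\phi(2u-d)F^{-1}(u-d)^{-2}\dot F^{ij}(h^2)_{ij}\leq0$, and $G\leq F$. Hence $(\partial_t-\mathcal L)Z\leq-d^2Z^3+2Z^2+(k+1)Z$ for every positive spatially constant $\phi$. The estimate must also keep the preserved index independent of the quotient index. The polar normalization preserves $\mathcal A_{n-k-1}$, while the polar quotient is $E_{n-k}/E_{n-k-1}$. Bounds for $\phi$ follow only afterwards, from $(\min F)^2\leq\phi\leq(\max F)^2$. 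Your spectral-gap route to exponential convergence is a possible alternative to the paper's exponentially improving pinching, but it needs these uniform bounds first.
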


\subsection{Discussion of the proof}

The main difficulty in proving Theorem~\ref{thm:flow} is to
obtain uniform two-sided curvature estimates under strict
convexity alone. Our choice of speed in \eqref{eq:mainflow}
is adapted both to the required monotonicity of the
quermassintegrals and to spherical polarity. The key observation
is that, after a change of time, the polar evolution has the
same speed form. This allows us to obtain the lower curvature
bound by applying an upper curvature estimate to the polar
hypersurface. For this argument to work, the upper estimate
must not require a priori bounds for the nonlocal factor
$\phi(t)$.

We establish this estimate using only the positivity of
$\phi(t)$. The concavity and inverse-concavity of
$G_a=F-a/F$ for every $a>0$ allow us to apply the algebraic
estimate of Andrews \cite{Andrews}, together with the lower
test function argument of Brendle, Choi, and Daskalopoulos
\cite{BCD}, to preserve strict convexity and obtain a uniform
curvature-ratio bound. This bound and the preserved
quermassintegral give a positive lower bound for the inradius
by the radius comparison of Cabezas-Rivas and Scheuer
\cite{CabezasScheuer}. We then apply a Tso-type argument to
$F/(u-d)$, where $u$ is the support function relative to an
interior point and $d>0$ is chosen from the inradius bound.
The terms involving $\phi(t)$ have a favorable sign, so the
resulting upper curvature bound is independent of its size.
Since the speed contains neither a support function nor a
radial weight, the interior point used in this estimate can
be chosen separately on successive time intervals without
changing the evolution equation.

To obtain the complementary lower bound, we use spherical
polarity \cites{Gerhardt,HuLi2023}. The polar hypersurface
satisfies
\begin{equation*}
 \mathcal W^*=\mathcal W^{-1},
 \qquad
 F^*=\frac{E_{n-k}(\mathcal W^*)}
           {E_{n-k-1}(\mathcal W^*)}
     =\frac1F.
\end{equation*}
After removing the tangential velocity and changing time,
its evolution becomes
\begin{equation*}
 \partial_\tau Y
 =\left(\frac{\phi^*}{F^*}-F^*\right)N,
 \qquad
 \frac{d\tau}{dt}=\phi(t),
 \qquad
 \phi^*(\tau(t))=\frac1{\phi(t)}.
\end{equation*}
The transformed normalization preserves
$\mathcal A_{n-k-1}$. Although the relation between the
quotient index and the preserved index differs from that
of the original flow, our upper estimate is formulated with
these indices independent. It therefore applies to the
polar evolution and bounds $\mathcal W^*$ from above.
The identity $\mathcal W^*=\mathcal W^{-1}$ then gives a
uniform positive lower bound for the original principal
curvatures. This obtains both curvature bounds from the same
estimate, without first controlling $\phi$ or $\phi^{-1}$.
Uniform bounds for $\phi$, and hence comparability of the
two time parameters, follow afterwards.

Once these curvature bounds are established, standard
parabolic regularity and continuation arguments give
higher-order estimates and long-time existence.
The positive ambient curvature also improves the preserved
pinching exponentially. Together with the derivative
estimates and the Codazzi equation, this gives exponential
decay of the normal speed and smooth convergence to a
single geodesic sphere. The monotonicity of the
quermassintegrals then yields the adjacent inequalities,
from which the full family follows by iteration.

For weakly convex domains, the inequalities follow by
approximation, but their equality characterization requires
a separate argument. If equality holds, variations supported
in the strictly convex region show that the relevant
curvature quotient is a positive constant there.
At a boundary point of this region where a principal
curvature vanishes, the lower test function comparison
and inverse-concavity control the derivative terms in the
Simons identity, while the positive ambient-curvature term
gives a contradiction. Thus the strictly convex region is
also closed, and the equality case reduces to that for
strictly convex domains.

The distinction from the locally constrained approach is
also illustrated in Appendix~\ref{sec:counterexample}.
For the flow of Chen, Guan, Li, and Scheuer \cite{CGLS},
we construct smooth strictly convex initial hypersurfaces
whose largest principal curvature becomes unbounded in
finite time when $1\leq k\leq n-1$. In the polar description,
this occurs when the least principal curvature reaches zero
while the polar solution remains smooth. Thus the missing
upper curvature estimate for that flow cannot hold under
strict convexity alone.

\subsection{Organization of the paper}
In Section~\ref{sec:prelim}, we collect some preliminaries on curvature functions, hypersurfaces and quermassintegrals in the sphere, and spherical polarity. We also derive the monotonicity and evolution formulas for the flow \eqref{eq:mainflow}. In Section~\ref{sec:flow}, we prove preservation of strict convexity and establish the curvature pinching estimates. In Section~\ref{sec:curvature}, we derive the uniform two-sided curvature estimates and prove the long-time existence of the flow. In Section~\ref{sec:convergence}, we prove the smooth exponential
convergence to a geodesic sphere and complete the proofs of Theorems~\ref{thm:flow} and~\ref{thm:AF}. In Appendix~\ref{sec:counterexample}, we give a finite-time curvature
blow-up example for the locally constrained flow of Chen, Guan, Li, and Scheuer \cite{CGLS}.

\subsection{On the use of AI}
\label{subsec:AI}

We initially approached the spherical Alexandrov--Fenchel
inequalities through locally constrained curvature flows.
In particular, we sought uniform $C^2$ estimates for the
flow of Chen, Guan, Li, and Scheuer \cite{CGLS}.
During this investigation, GPT-5.6 Pro (OpenAI) proposed
the curvature blow-up example presented in
Appendix~\ref{sec:counterexample}, which we subsequently
verified independently. This example shows that, when
$1\leq k\leq n-1$, smooth strictly convex initial hypersurfaces
can develop unbounded principal curvature in finite time
under that flow. It therefore rules out the upper curvature
estimate we had sought under strict convexity alone.
This led us to shift our focus from this locally constrained
approach to the globally constrained quermassintegral-preserving
flow \eqref{eq:mainflow} studied in this paper.

We also used ChatGPT (OpenAI) for language editing and for
comments on the clarity, exposition, and organization of
the manuscript. We independently assessed all AI-generated
suggestions and verified all mathematical arguments,
statements, calculations, and references. We take full
responsibility for the content of the manuscript.

\section{Preliminaries}\label{sec:prelim}
In this section, we collect some preliminaries on curvature functions, the geometry of hypersurfaces in the sphere, spherical quermassintegrals, and polarity. We also record the monotonicity and evolution formulas that will be used in the subsequent sections.

\subsection{Curvature functions}\label{subsec:symmetric}
We first recall the general curvature-function notation and the
inverse-concavity estimates used below. Let
\begin{equation*}
 \Gamma_+
 =\{\kappa=(\kappa_1,\ldots,\kappa_n)\in\mathbb R^n:
       \kappa_i>0\text{ for every }i\}.
\end{equation*}
A smooth symmetric function $\Phi$ on $\Gamma_+$ will also be regarded
as a function of positive definite symmetric matrices by evaluation at
their eigenvalues. For symmetric matrices $A$ and $\xi$, we write
\begin{equation*}
 \left.\frac{d}{ds}\Phi(A+s\xi)\right|_{s=0}
 =\dot\Phi^{ij}\xi_{ij},
 \qquad
 \left.\frac{d^2}{ds^2}\Phi(A+s\xi)\right|_{s=0}
 =\ddot\Phi^{ij,k\ell}\xi_{ij}\xi_{k\ell}.
\end{equation*}
If $A$ is diagonal with eigenvalues
$\kappa_1,\ldots,\kappa_n$, then
\begin{equation*}
 \dot\Phi^{ii}
 =\dot\Phi^i
 =\frac{\partial\Phi}{\partial\kappa_i},
 \qquad
 \dot\Phi^{ij}=0\quad\text{for }i\neq j.
\end{equation*}
We use the same convention for all curvature functions appearing below. Repeated tensor indices are summed from $1$ to $n$ unless otherwise stated.

Following \cite{Andrews}*{Section~2}, a smooth symmetric function $\Phi$ on $\Gamma_+$ is called \emph{inverse-concave} if
\begin{equation}\label{eq:inverseConcavityDefinition}
 A\longmapsto-\Phi(A^{-1})
\end{equation}
is concave on the cone of positive definite symmetric matrices. If $\Phi$ is inverse-concave, then \cite{Andrews}*{Lemma~3.4} gives, in a principal frame,
\begin{equation}\label{eq:inverseConcavity}
 \ddot\Phi^{ij,k\ell}\xi_{ij}\xi_{k\ell}
 +2\sum_{i,j}\frac{\dot\Phi^i}{\kappa_j}\xi_{ij}^2
 \geq0
\end{equation}
for every symmetric matrix $\xi$.

The next consequence of the algebraic estimate of Andrews \cite{Andrews}*{Theorem~4.1} will be used in the curvature pinching argument.

\begin{lemma}\label{lem:AndrewsGradient}
Let $\Phi$ be a smooth symmetric strictly increasing, concave, and
inverse-concave function on $\Gamma_+$. Let
\begin{equation*}
 A=\operatorname{diag}(\kappa_1,\ldots,\kappa_n)>0,
 \qquad
 H=\operatorname{tr}A=\sum_i\kappa_i.
\end{equation*}
Suppose that, for some $0<\varepsilon<1/n$,
\begin{equation*}
 \kappa_1=\cdots=\kappa_\mu=\varepsilon H,
 \qquad
 \kappa_j>\kappa_1\quad(j>\mu).
\end{equation*}
Let $T_{ijk}$ be a totally symmetric three-tensor satisfying
\begin{equation}\label{eq:algebraicNullData}
 T_{pij}
 =\varepsilon\left(\sum_qT_{pqq}\right)\delta_{ij}
 \qquad
 (1\leq p\leq n,\ 1\leq i,j\leq\mu).
\end{equation}
Then
\begin{equation}\label{eq:pinchingGradient}
 \ddot\Phi^{ij,k\ell}
 \left(
 T_{1ij}T_{1k\ell}
 -\varepsilon\sum_pT_{pij}T_{pk\ell}
 \right)
 +2\sum_i\sum_{j>\mu}
 \frac{\dot\Phi^i}{\kappa_j-\kappa_1}T_{i1j}^2
 \geq 0.
\end{equation}
\end{lemma}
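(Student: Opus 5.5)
The plan is to derive the lemma as a specialization of Andrews' algebraic estimate \cite{Andrews}*{Theorem~4.1}, applied to the curvature-pinching quantity $G(A)=\kappa_{\min}(A)-\varepsilon\operatorname{tr}A$. That theorem has the following content. Let $\Phi$ be concave and inverse-concave, and let $A$ be a point where the smallest eigenvalue $\kappa_1$ has multiplicity $\mu$ and equals $\varepsilon H$. Then the second-order quantity
\[
Q=\ddot\Phi^{ij,k\ell}\bigl(T_{1ij}T_{1k\ell}-\varepsilon\textstyle\sum_p T_{pij}T_{pk\ell}\bigr)+2\sum_i\sum_{j>\mu}\frac{\dot\Phi^i}{\kappa_j-\kappa_1}T_{i1j}^2
\]
is nonnegative whenever the Codazzi-type tensor $T=\nabla A$ satisfies the first-order null conditions. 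Those conditions are the ones forced by $G$ attaining a minimum zero: $\nabla_p(\kappa_1-\varepsilon H)=0$ in the multiplicity-$\mu$ sense.

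So the first step is to check that hypothesis \eqref{eq:algebraicNullData} is exactly that null condition. At a point with $\kappa_1=\dots=\kappa_\mu$, the derivative of the smallest eigenvalue in direction $p$ is an eigenvalue of the $\mu\times\mu$ block $(T_{pij})_{i,j\le\mu}$. The lower test function argument of \cite{BCD} requires this whole block to equal $\varepsilon\nabla_pH\,\delta_{ij}=\varepsilon(\sum_qT_{pqq})\delta_{ij}$, which is \eqref{eq:algebraicNullData}. The second-derivative perturbation formula for $\kappa_1$ produces the term $2\sum_{j>\mu}T_{i1j}^2/(\kappa_j-\kappa_1)$ weighted by $\dot\Phi^i$. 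Total symmetry of $T$ allows the index rearrangements.

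The second step is to verify the nonnegativity directly, in case the cited theorem is phrased differently. I would split $\xi^{(p)}=T_{p\cdot\cdot}$ and write the $\ddot\Phi$ part as $(1-\varepsilon)\ddot\Phi(\xi^{(1)},\xi^{(1)})-\varepsilon\sum_{p\ge2}\ddot\Phi(\xi^{(p)},\xi^{(p)})$. Concavity makes the terms with $-\varepsilon$ nonnegative. The remaining term $(1-\varepsilon)\ddot\Phi(\xi^{(1)},\xi^{(1)})\le0$ must be absorbed, and for this I would use inverse-concavity \eqref{eq:inverseConcavity}:
\[
\ddot\Phi(\xi,\xi)\ge-2\sum_{i,j}\frac{\dot\Phi^i}{\kappa_j}\xi_{ij}^2 .
\]
Applied to $\xi^{(1)}$, the entries with $j>\mu$ are controlled by the good term, since $\frac{1-\varepsilon}{\kappa_j}\le\frac{1}{\kappa_j-\kappa_1}$. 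This inequality is equivalent to $\kappa_j-\kappa_1\le\kappa_j/(1-\varepsilon)$, which is trivially true.

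The entries with $i,j\le\mu$ are determined by \eqref{eq:algebraicNullData}, as $T_{1ij}=\varepsilon\nabla_1H\,\delta_{ij}$. These are the delicate ones. Their contribution $(1-\varepsilon)\cdot2\dot\Phi^i\varepsilon^2(\nabla_1H)^2/\kappa_1$ must be offset by the concavity gain from the $p\ge2$ terms and by the $\mu$-block structure of every $\xi^{(p)}$. I expect this balancing, where Andrews uses $\varepsilon<1/n$ and the trace relation $\sum_q T_{pqq}=\nabla_pH$, to be the main obstacle. For it I would simply invoke \cite{Andrews}*{Theorem~4.1} (with $G=\kappa_{\min}-\varepsilon H$) instead of redoing that computation. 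Strict monotonicity of $\Phi$ ensures $\dot\Phi^i>0$, so all weights above are positive.
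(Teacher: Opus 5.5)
Your strategy of deriving the lemma from \cite{Andrews}*{Theorem~4.1}, with \eqref{eq:algebraicNullData} as the null-gradient hypothesis for $v=e_1$, is the right one. As written, however, it proves nothing beyond the citation: you paraphrase the theorem as already asserting \eqref{eq:pinchingGradient}, which is the statement itself. Andrews' estimate contains the gradient term as a supremum over all real $n\times n$ matrices $\Gamma$:
\[
\ddot\Phi^{ij,k\ell}\Bigl(T_{1ij}T_{1k\ell}-\varepsilon\sum_pT_{pij}T_{pk\ell}\Bigr)
+2\sup_\Gamma\dot\Phi^{ij}\bigl(2\Gamma_i^pZ_{jp}-\Gamma_i^p\Gamma_j^qP_{pq}\bigr)\geq0,
\]
where $P=A-\varepsilon H\operatorname{Id}$ and $Z_{ip}=T_{ip1}-\varepsilon\delta_{p1}\sum_qT_{iqq}$. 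The lemma is exactly the evaluation of this supremum.

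In the principal frame, $P=\operatorname{diag}(\kappa_p-\kappa_1)$ vanishes in the first $\mu$ diagonal slots. Moreover, \eqref{eq:algebraicNullData} together with total symmetry gives $Z_{ip}=0$ for $p\leq\mu$ and $Z_{ip}=T_{ip1}$ for $p>\mu$. Completing the square,
\[
2\Gamma_i^pT_{ip1}-(\kappa_p-\kappa_1)(\Gamma_i^p)^2\leq\frac{T_{ip1}^2}{\kappa_p-\kappa_1}\qquad(p>\mu),
\]
with equality at $\Gamma_i^p=T_{ip1}/(\kappa_p-\kappa_1)$. Hence the supremum equals $\sum_i\sum_{p>\mu}\dot\Phi^iT_{ip1}^2/(\kappa_p-\kappa_1)$, and $T_{ip1}=T_{i1p}$ gives \eqref{eq:pinchingGradient}.

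Two features of this step are missing from your sketch. First, what is needed is an \emph{upper} bound for the supremum. The perturbation formula for $\kappa_1$ from \cite{BCD} explains why such a term appears in the flow argument, but it says nothing about Andrews' supremum, and testing with a particular $\Gamma$ would only bound it from below. Second, the vanishing of $Z_{ip}$ for $p\leq\mu$ is essential. There $P_{pp}=0$, so since $\dot\Phi^i>0$, any nonzero $Z_{ip}$ would make the supremum $+\infty$. Thus \eqref{eq:algebraicNullData} is used twice: as the null-gradient hypothesis, and to make the supremum finite.

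Your second step does not repair this, since you stop exactly at the hard balancing and fall back on the same citation. Its case split is also incomplete. For $\xi^{(1)}=T_{1\cdot\cdot}$, the inverse-concavity sum $\sum_{i,j}\dot\Phi^i\kappa_j^{-1}\xi_{ij}^2$ also contains the entries $(i,1)$ with $i>\mu$. There $\xi^{(1)}_{i1}=T_{i11}=\varepsilon\sum_qT_{iqq}$ carries the weight $\dot\Phi^i/\kappa_1$. The good term contains $T_{i11}^2$ only with the weight $\dot\Phi^1/(\kappa_i-\kappa_1)$, which you already use for the $(1,i)$ entry and which does not dominate $\dot\Phi^i/\kappa_1$ in general. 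So these contributions are left unaccounted for. The clean route is to take Andrews' theorem in its actual supremum form and compute the supremum as above.
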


\begin{proof}
We apply \cite{Andrews}*{Theorem~4.1} and compute explicitly the
supremum appearing there. Set $P=A-\varepsilon H\operatorname{Id}$ and
$Z_{ip}=T_{ip1}-\varepsilon\delta_{p1}\sum_qT_{iqq}$.
The condition \eqref{eq:algebraicNullData} gives the null-gradient
hypothesis of \cite{Andrews}*{Theorem~4.1} with $v=e_1$.
Thus
\begin{equation}\label{eq:AndrewsSup}
 \ddot\Phi^{ij,k\ell}
 \left(T_{1ij}T_{1k\ell}
       -\varepsilon\sum_pT_{pij}T_{pk\ell}\right)+2\sup_\Gamma\dot\Phi^{ij}
 \left(2\Gamma_i^pZ_{jp}-\Gamma_i^p\Gamma_j^qP_{pq}\right) \geq 0,
\end{equation}
where the supremum is over all real $n\times n$ matrices $\Gamma$.
In this frame,
\begin{equation*}
 P_{pq}=(\kappa_p-\kappa_1)\delta_{pq},
 \qquad
 \dot\Phi^{ij}=\dot\Phi^i\delta_{ij}.
\end{equation*}
Moreover, \eqref{eq:algebraicNullData} and the symmetry of $T$ imply
that $Z_{ip}=0$ for $p\leq\mu$, while $Z_{ip}=T_{ip1}$ for
$p>\mu$. Hence
\begin{align*}
&\dot\Phi^{ij}
 \left(2\Gamma_i^pZ_{jp}
       -\Gamma_i^p\Gamma_j^qP_{pq}\right)\\
=&\sum_i\sum_{p>\mu}\dot\Phi^i
 \left(
 2\Gamma_i^pT_{ip1}
 -(\kappa_p-\kappa_1)(\Gamma_i^p)^2
 \right)\\
=&\sum_i\sum_{p>\mu}\dot\Phi^i
 \left[
 \frac{T_{ip1}^2}{\kappa_p-\kappa_1}
 -(\kappa_p-\kappa_1)
 \left(
 \Gamma_i^p-\frac{T_{ip1}}{\kappa_p-\kappa_1}
 \right)^2
 \right].
\end{align*}
Therefore the supremum is attained by taking
\begin{equation*}
 \Gamma_i^p=\frac{T_{ip1}}{\kappa_p-\kappa_1}
 \quad\text{for }p>\mu,
 \qquad
 \Gamma_i^p=0
 \quad\text{for }p\leq\mu,
\end{equation*}
and hence
\begin{equation*}
 \sup_\Gamma\dot\Phi^{ij}
 \left(2\Gamma_i^pZ_{jp}
       -\Gamma_i^p\Gamma_j^qP_{pq}\right)
 =
 \sum_i\sum_{p>\mu}
 \frac{\dot\Phi^i}{\kappa_p-\kappa_1}T_{ip1}^2.
\end{equation*}
Substituting this into \eqref{eq:AndrewsSup} and using
$T_{ip1}=T_{i1p}$ proves \eqref{eq:pinchingGradient}.
\end{proof}

We now specialize to the elementary symmetric functions. For
$\kappa=(\kappa_1,\ldots,\kappa_n)\in\mathbb R^n$, let
\begin{equation*}
 E_k(\kappa)=\binom nk^{-1}\sigma_k(\kappa)
 =\binom nk^{-1}\sum_{i_1<\cdots<i_k}
                    \kappa_{i_1}\cdots\kappa_{i_k},
 \qquad 1\leq k\leq n.
\end{equation*}
We set $E_0=1$ and $E_{n+1}=0$. The standard identities in
\cite{HuLiWei}*{Lemma~2.1} give
\begin{equation}\label{eq:Ejidentities}
\begin{aligned}
 \dot E_k^{ij}A_{ij}&=kE_k,\\
 \dot E_k^{ij}\delta_{ij}&=kE_{k-1},\\
 \dot E_k^{ij}(A^2)_{ij}&=nE_1E_k-(n-k)E_{k+1},
\end{aligned}
\qquad 1\leq k\leq n.
\end{equation}
For $1\leq j\leq k\leq n-1$, the Newton--MacLaurin inequality
\cite{HuLiWei}*{Lemma~2.2} is
\begin{equation}\label{eq:NewtonMaclaurin}
 E_{k+1}E_{j-1}\leq E_kE_j,
\end{equation}
with equality if and only if all eigenvalues are equal.

For $0\leq k\leq n-1$, set
\begin{equation}\label{eq:Fdefinition}
 F=\frac{E_{k+1}}{E_k}.
\end{equation}
By \cite{Andrews}*{Theorem~2.6}, $F$ is smooth, symmetric,
homogeneous of degree one, strictly increasing, concave, and
inverse-concave on $\Gamma_+$. We shall also use its reciprocal dual
\begin{equation}\label{eq:dualF}
 F_*(A)=\frac1{F(A^{-1})}
       =\frac{E_{n-k}(A)}{E_{n-k-1}(A)}.
\end{equation}
Thus $F_*$ is again an adjacent quotient and is concave on $\Gamma_+$
by the same theorem. Homogeneity and monotonicity give
\begin{equation}\label{eq:minmaxF}
 \dot F^{ij}A_{ij}=F,
 \qquad
 F(\operatorname{Id})=1,
 \qquad
 \kappa_{\min}\leq F\leq\kappa_{\max}.
\end{equation}
Moreover, \cite{HuLiWei}*{Corollary~2.3} gives
\begin{equation}\label{eq:quotientTraces}
 1\leq\dot F^{ij}\delta_{ij}\leq k+1,
 \qquad
 F^2\leq\dot F^{ij}(A^2)_{ij}\leq(n-k)F^2.
\end{equation}

For $a>0$, define
\begin{equation}\label{eq:Ga}
 G_a=F-\frac{a}{F}.
\end{equation}

\begin{lemma}\label{lem:Gconcavity}
For every $a>0$, the function $G_a$ is smooth, symmetric, strictly
increasing, concave, and inverse-concave on $\Gamma_+$.
\end{lemma}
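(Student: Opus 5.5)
We write $G_a=\psi_a\circ F$ with $\psi_a(s)=s-a/s$ on $(0,\infty)$, and reduce every property to properties of $F$ recorded after \eqref{eq:Fdefinition}, namely that $F$ is smooth, symmetric, positive, homogeneous of degree one, strictly increasing, concave and inverse-concave on $\Gamma_+$, together with elementary facts about $\psi_a$.

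\emph{Smoothness, symmetry and monotonicity.} Smoothness and symmetry are inherited from $F$, since $F>0$ on $\Gamma_+$. Because $\psi_a'(s)=1+a/s^2>0$, the chain rule gives $\partial G_a/\partial\kappa_i=\psi_a'(F)\,\partial F/\partial\kappa_i>0$, so $G_a$ is strictly increasing.

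\emph{Concavity.} We have $\psi_a''(s)=-2a/s^3<0$, so $\psi_a$ is increasing and concave. For $A,B$ positive definite and $\lambda\in[0,1]$, concavity of $F$ gives $F(\lambda A+(1-\lambda)B)\geq\lambda F(A)+(1-\lambda)F(B)$. Applying the increasing function $\psi_a$ and then its concavity,
\begin{equation*}
 \psi_a\bigl(F(\lambda A+(1-\lambda)B)\bigr)\geq\psi_a\bigl(\lambda F(A)+(1-\lambda)F(B)\bigr)\geq\lambda\psi_a(F(A))+(1-\lambda)\psi_a(F(B)).
\end{equation*}
Hence $G_a$ is concave as a matrix function.

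\emph{Inverse-concavity.} This is the only step that needs an idea. Using \eqref{eq:dualF}, $F(A^{-1})=1/F_*(A)$, so
\begin{equation*}
 -G_a(A^{-1})=-\frac{1}{F_*(A)}+aF_*(A)=\chi_a(F_*(A)),\qquad \chi_a(s)=as-\frac1s .
\end{equation*}
Since $\chi_a'(s)=a+s^{-2}>0$ and $\chi_a''(s)=-2/s^3<0$, the function $\chi_a$ is increasing and concave on $(0,\infty)$. By \eqref{eq:dualF}, $F_*=E_{n-k}/E_{n-k-1}$ is a positive concave function on positive definite matrices. The same composition argument as in the concavity step then shows that $A\mapsto-G_a(A^{-1})$ is concave, which is exactly inverse-concavity in the sense of \eqref{eq:inverseConcavityDefinition}.

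The structural observation is that the dual of $G_a$ is $s\mapsto as-1/s$ composed with $F_*$, which is again "increasing concave of a concave function." The term $F$ turns into $-1/F_*$ and the term $-a/F$ turns into $aF_*$, so the roles of the two pieces swap and $G_a$ is essentially self-dual in form. No second-derivative computation is required.
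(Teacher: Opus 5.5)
Your proposal is correct and follows the paper's own proof. Both write $G_a$ and $-G_a(A^{-1})=aF_*(A)-F_*(A)^{-1}$ as increasing concave functions ($s\mapsto s-a/s$ and $s\mapsto as-1/s$) composed with the positive concave functions $F$ and $F_*$, and both obtain strict monotonicity from $\dot G_a^i=(1+a/F^2)\dot F^i>0$.
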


\begin{proof}
The functions $s\mapsto s-a/s$ and $s\mapsto as-1/s$ are increasing and concave on $(0,\infty)$. Since $F$ and $F_*$ are concave, so are $G_a$ and
\begin{equation*}
 -G_a(A^{-1})=aF_*(A)-F_*(A)^{-1}.
\end{equation*}
Moreover, $\dot G_a^i=(1+a/F^2)\dot F^i>0$.
\end{proof}

\subsection{Hypersurfaces in the sphere}\label{subsec:geometry}
Fix a point $p\in\mathbb S^{n+1}$ and let $r$ denote the geodesic distance from
$p$. Away from $p$ and its antipodal point, the unit round sphere is
represented in geodesic polar coordinates as the warped product
$(0,\pi)\times\mathbb S^n$ equipped with the metric
\begin{equation*}
 \overline g=dr^2+\sin^2r\,\sigma,
 \qquad 0<r<\pi,
\end{equation*}
where $\sigma$ is the standard round metric on $\mathbb S^n$. We denote by
$\overline\nabla$ the Levi-Civita connection of $\overline g$. The
radial vector field $\sin r\,\partial_r$ satisfies
\begin{equation}
 \overline\nabla_U(\sin r\,\partial_r)=\cos r\,U   \label{equ-conformal}
\end{equation}
for every tangent vector $U$ on $\mathbb S^{n+1}$.

Let $X:M^n\to\mathbb S^{n+1}$ be a smooth closed hypersurface with outward unit
normal $\nu$. We denote by $g$, $\nabla$, and $d\mu$ the induced
metric, its Levi-Civita connection, and the area element, respectively.
Let $\{x^1,\ldots,x^n\}$ be local coordinates on $M$ and write
\begin{equation*}
 \partial_i=X_*\left(\frac{\partial}{\partial x^i}\right).
\end{equation*}
The induced metric and the second fundamental form are defined by
\begin{equation*}
 g_{ij}=\overline g(\partial_i,\partial_j),
 \qquad
 h_{ij}=\overline g(\overline\nabla_{\partial_i}\nu,\partial_j).
\end{equation*}
The associated Weingarten map is the $g$-self-adjoint endomorphism
\begin{equation*}
 \mathcal W=(h^i{}_j),\qquad h^i{}_j=g^{ik}h_{kj}.
\end{equation*}
Its eigenvalues are the principal curvatures, which we order as
$\kappa_1\leq\cdots\leq\kappa_n$. Although the matrix $(h^i{}_j)$
need not be symmetric in arbitrary coordinates, $\mathcal W$ is
self-adjoint with respect to $g$. Hence it is diagonalizable with real
eigenvalues. We also use $A$ for the second fundamental form and write
\begin{equation*}
    H=\operatorname{tr}\mathcal W,\qquad \mathring{\mathcal W}=\mathcal W-\frac{H}{n}\operatorname{Id},
    \qquad
    |A|^2=h^i{}_jh^j{}_i,\qquad
    (h^2)_{ij}=h_{ik}g^{k\ell}h_{\ell j}.
\end{equation*}
Thus a geodesic sphere of radius $r<\pi/2$, with the outward normal of
the enclosed ball, has $\mathcal W=\cot r\,\operatorname{Id}$. The Gauss and Codazzi
equations are
\begin{equation*}
 R_{ijk\ell}=g_{ik}g_{j\ell}-g_{i\ell}g_{jk}+h_{ik}h_{j\ell}-h_{i\ell}h_{jk},
 \qquad \nabla_i h_{jk}=\nabla_j h_{ik}.
\end{equation*}
For a curvature function, its matrix derivatives on $M$ are evaluated
in an orthonormal frame. Equivalently, since $\mathcal W=g^{-1}h$ is
$g$-self-adjoint, one may regard the curvature function as an
$O(n)$-invariant function of the symmetric matrix
$g^{-1/2}hg^{-1/2}$. If the principal curvatures lie in a compact
subset of $\Gamma_+$, then all derivatives of $F$ are uniformly
bounded and $(\dot F^{ij})$ is uniformly positive definite.

Suppose now that $p$ lies in the interior of the domain $\Omega$
enclosed by $M$. The support function with respect to $p$ is
\begin{equation*}
 u=\overline g(\sin r\,\partial_r,\nu).
\end{equation*}
The conformal identity above and the Codazzi equation give the standard formulas \cite{CGLS}*{Lemma 2.7}
\begin{equation}\label{eq:supportHessians}
\begin{aligned}
 \nabla_i u&=\sin r\,h^j{}_i\nabla_j r,\\
 \nabla_i\nabla_j u
 &=\sin r\,\nabla^k r\nabla_k h_{ij}
                   +\cos r\,h_{ij}-u(h^2)_{ij}.
\end{aligned}
\end{equation}
An interior centre need not satisfy $r<\pi/2$ on all of $M$. We retain
the full coordinate range $0<r<\pi$. The estimate in
Section~\ref{sec:curvature} uses only $\cos r\leq1$. 

We shall use the standard support function estimate
\cite{CabezasScheuer}*{Lemma~2.1}, cf.
\cite{GerhardtSphere}*{Section~6}. If
$B_\rho(p)\subset\Omega$, with $0<\rho<\pi/2$, then
\begin{equation}\label{eq:supportBound}
 u\geq\sin\rho,
 \qquad
 \rho\leq r\leq\pi-\rho
 \quad\text{on }M.
\end{equation}
The second pair of inequalities follows from $u\leq\sin r$ and
$0<r<\pi$.

We shall also use the rigidity theorem of Koh \cite{Koh}, in the form recalled in \cite{KohLee}*{Theorem~1}. A smooth connected closed hypersurface embedded in an open hemisphere is a geodesic sphere if $E_{k+1}/E_k$ is constant and $E_k>0$ everywhere, for some $0\leq k\leq n-1$. This includes $k=0$, with $E_0=1$, and applies
in particular to the boundary of a strictly convex domain.

\subsection{Spherical quermassintegrals}\label{subsec:quermass}
We use $\mathcal A_j$ and $f_j$ as defined in
\eqref{eq:Adefinition} and \eqref{eq:ballfunctions}. Under a normal variation $\partial_tX=\eta\nu$, their first variations are
\cite{CGLS}*{Proposition 2.9}
\begin{equation}\label{eq:Avariation}
 \frac{d}{dt}\mathcal A_{-1}(\Omega_t)=\int_{M_t}\eta\,d\mu_t,
 \qquad
 \frac{d}{dt}\mathcal A_m(\Omega_t)=(m+1)\int_{M_t}\eta\sigma_{m+1}\,d\mu_t
 \quad(0\leq m\leq n-1).
\end{equation}
To write the dimensional constants uniformly, put
\begin{equation*}
 d_0=1,\qquad d_j=j\binom nj\quad(1\leq j\leq n).
\end{equation*}
Thus $\frac{d}{dt}\mathcal A_{j-1}(\Omega_t)=d_j\int_{M_t}\eta E_j\,d\mu_t$ for
$0\leq j\leq n$. Since $\mathcal W=\cot r\,\operatorname{Id}$ on a geodesic sphere,
\begin{equation}\label{eq:ball}
\begin{aligned}
 f_{-1}(r)&=|\mathbb S^n|\int_0^r(\sin \rho)^n\,d\rho,\\
 f_m'(r)&=d_{m+1}|\mathbb S^n|\sin^{n-m-1}r\cos^{m+1}r
                    &&(0\leq m\leq n-1),\\
 f_m(0)&=0&&(-1\leq m\leq n-1).
\end{aligned}
\end{equation}
In particular $f_m^{-1}$ is defined on $(0,f_m(\pi/2))$.
The endpoint value is understood as a limit from below.

The normalization \eqref{eq:Adefinition} differs from the
integral-geometric normalization only by positive dimensional factors. More precisely, $\mathcal A_m$ is $(n+1)\binom nm$ times the integral-geometric quermassintegral of index $m+1$ for $m\geq0$, while the volume is unchanged. The endpoint identity consequently is
\begin{equation*}
 \mathcal A_n(\Omega)=|\mathbb S^n|.
\end{equation*}
For the integral-geometric definition and its relation to curvature integrals, see Solanes \cite{SolanesThesis}*{Section 3.2} and Cabezas-Rivas and Scheuer \cite{CabezasScheuer}*{Section 2}.

\begin{lemma}\label{lem:inclusion}
For $-1\leq m\leq n-1$, $\mathcal A_m$ is monotone under inclusion of spherical convex bodies contained in an open hemisphere. Every smooth weakly convex domain satisfies
\begin{equation*}
 0<\mathcal A_m(\Omega)<f_m(\pi/2).
\end{equation*}
\end{lemma}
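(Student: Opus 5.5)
The plan is to derive both claims from the integral-geometric (Crofton-type) description of the spherical quermassintegrals, which the text above links to $\mathcal A_m$ through positive dimensional factors. Let $\mathcal L_{r}$ be the Grassmannian of totally geodesic $r$-dimensional great subspheres $L\subset\mathbb S^{n+1}$, with its invariant measure. For $m\geq0$, the integral-geometric quermassintegral of index $m+1$ of a spherical convex body $K$ is, up to a positive constant, the measure of the set of great subspheres of a fixed dimension (depending on $m$) that meet $K$. See Santal\'o \cite{Santalo} and Solanes \cite{SolanesThesis}*{Section 3.2}. By the comparison recorded just before the lemma, $\mathcal A_m=c_{n,m}W_{m+1}$ with $c_{n,m}>0$, and $\mathcal A_{-1}$ is the volume.

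\emph{Monotonicity.} For $m=-1$ this is monotonicity of volume. For $m\geq0$, suppose $K\subset K'$. Every great subsphere meeting $K$ also meets $K'$. Hence the measure of $\{L: L\cap K\neq\emptyset\}$ is at most that of $\{L: L\cap K'\neq\emptyset\}$. Multiplying by the positive constant gives $\mathcal A_m(K)\leq\mathcal A_m(K')$.

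\emph{Lower bound.} Since $\Omega$ has nonempty interior, it contains a small geodesic ball $B_\rho(p)$. Monotonicity then gives $\mathcal A_m(\Omega)\geq f_m(\rho)$. By \eqref{eq:ball}, $f_m(0)=0$ and $f_m'>0$ on $(0,\pi/2)$, so $f_m(\rho)>0$.

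\emph{Upper bound.} Since $\Omega$ is compact and lies in an open hemisphere, it lies in a closed geodesic ball $\overline B_{R}(q)$ with $R<\pi/2$. Monotonicity gives $\mathcal A_m(\Omega)\leq f_m(R)$. Because $f_m$ is strictly increasing on $(0,\pi/2)$, $f_m(R)<f_m(\pi/2)$, where $f_m(\pi/2)$ is understood as the limit from below.

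To place $\Omega$ in such a ball, compactness is enough. If $\Omega\subset\{x\cdot e>0\}$, then $x\cdot e\geq\delta>0$ on $\Omega$, so every point of $\Omega$ is at distance at most $\arccos\delta<\pi/2$ from $e$.

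\emph{Main obstacle.} The main point is to justify that the curvature-integral definition \eqref{eq:Adefinition} agrees with the Crofton-type integral-geometric definition for smooth convex domains, including weakly convex ones. This identification is what the text cites (Solanes, Cabezas-Rivas and Scheuer \cite{CabezasScheuer}*{Section 2}), so I would take it as given.

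One point needs care: the convex body being compared in the monotonicity argument may be a ball, so the integral-geometric formula must hold for both bodies. Balls are smooth strictly convex, so the formula applies to them directly. If a nonsmooth body were needed, I would define $\mathcal A_m$ on all convex bodies by the integral-geometric formula, which is consistent with \eqref{eq:Adefinition} on smooth ones. Here only smooth bodies (balls and $\Omega$) are compared, so no approximation is required.
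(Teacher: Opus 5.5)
Your proposal is correct and follows the paper's proof: inclusion monotonicity is taken from the integral-geometric (Crofton-type) definition as in Cabezas-Rivas--Scheuer, and the bounds come from sandwiching $B_\rho(p)\subset\Omega\subset B_R(z)$ with $0<\rho\leq R<\pi/2$ and using the strict monotonicity of $f_m$ with $f_m(0)=0$ from \eqref{eq:ball}. Your extra argument that a compact set in an open hemisphere lies in a ball of radius $\arccos\delta<\pi/2$ fills in a step the paper leaves implicit.
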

\begin{proof}
The inclusion monotonicity follows from the integral-geometric definition, see \cite{CabezasScheuer}*{Section 2}. Choose $B_\rho(p)\subset\Omega\subset B_R(z)$ with
$0<\rho\leq R<\pi/2$. Then \eqref{eq:ball} gives
\begin{equation*}
 0<f_m(\rho)\leq\mathcal A_m(\Omega)\leq f_m(R)<f_m(\pi/2).
 \qedhere
\end{equation*}
\end{proof}

Denote the spherical inradius and circumradius by $\rho_-$ and $\rho_+$. We use \cite{CabezasScheuer}*{Proposition 3.1}. It states that if $\kappa_{\max}\leq C_0\kappa_{\min}$, then $\rho_+\leq C_1(n,C_0)\rho_-$. Its consequence for a fixed
quermassintegral is the following result.
\begin{lemma}[{\cite{CabezasScheuer}*{Corollary 3.2}}]\label{lem:scale}
Suppose $\kappa_{\max}\leq C_0\kappa_{\min}$ and
$\mathcal A_m(\Omega)=a$ for some $-1\leq m\leq n-1$.
If $r_a=f_m^{-1}(a)$, then
\begin{equation}\label{eq:scale}
 C_1^{-1}r_a\leq\rho_-(\Omega)\leq r_a<\pi/2.
\end{equation}
\end{lemma}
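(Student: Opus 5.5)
The plan is to combine the circumradius--inradius comparison of \cite{CabezasScheuer}*{Proposition~3.1}, namely $\rho_+\leq C_1\rho_-$, with the inclusion monotonicity of Lemma~\ref{lem:inclusion} and the strict monotonicity of $f_m$ on $(0,\pi/2)$ coming from \eqref{eq:ball}.

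First I record the basic facts. By \eqref{eq:ball}, $f_m'(r)>0$ for $0<r<\pi/2$ when $0\leq m\leq n-1$, and also for $m=-1$. So $f_m$ is a strictly increasing bijection from $(0,\pi/2)$ onto $(0,f_m(\pi/2))$. Lemma~\ref{lem:inclusion} gives $0<a<f_m(\pi/2)$, so $r_a=f_m^{-1}(a)$ is well defined and satisfies $r_a<\pi/2$.

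Next I take an inball $B_{\rho_-}(p)\subset\Omega$ and a circumball $\Omega\subset B_{\rho_+}(z)$. Since $\Omega$ lies in an open hemisphere, we have $\rho_+<\pi/2$; this is the standard fact that a convex body in an open hemisphere has circumradius less than $\pi/2$. Inclusion monotonicity then gives
\begin{equation*}
 f_m(\rho_-)\leq\mathcal A_m(\Omega)=a=f_m(r_a)\leq f_m(\rho_+).
\end{equation*}
Because $f_m$ is strictly increasing, this yields $\rho_-\leq r_a\leq\rho_+$. The upper bound in \eqref{eq:scale} is exactly the first of these inequalities.

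For the lower bound I invoke \cite{CabezasScheuer}*{Proposition~3.1}. The pinching hypothesis $\kappa_{\max}\leq C_0\kappa_{\min}$ gives $\rho_+\leq C_1(n,C_0)\rho_-$. Combining this with $r_a\leq\rho_+$ gives $r_a\leq C_1\rho_-$, which is the lower bound $C_1^{-1}r_a\leq\rho_-$.

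The only delicate point is justifying $\rho_+<\pi/2$ and checking that the balls used lie in the range where inclusion monotonicity and the formula $f_m=\mathcal A_m(B_r)$ apply. I would settle this by citing the hemisphere containment, exactly as in the proof of Lemma~\ref{lem:inclusion}, which already produces some $R<\pi/2$ with $\Omega\subset B_R(z)$. Taking the infimum over such $R$ then gives $\rho_+\leq R<\pi/2$.
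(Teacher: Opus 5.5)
Your proposal is correct and takes essentially the paper's route. The paper writes out no proof; it presents the lemma as the consequence of \cite{CabezasScheuer}*{Proposition~3.1} for a fixed quermassintegral, which is your argument: inclusion monotonicity gives $f_m(\rho_-)\leq a\leq f_m(\rho_+)$, hence $\rho_-\leq r_a\leq\rho_+\leq C_1\rho_-$, with $\rho_+<\pi/2$ ensuring that $f_m$ is strictly increasing on the relevant range. Because you work directly with $\mathcal A_m$ and $f_m=\mathcal A_m(B_r)$ in the paper's own normalization, you also avoid the normalization issue the paper remarks on after the statement.
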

The dimensional change of normalization cancels in the inverse ball
function. The upper bound in \eqref{eq:scale} concerns the inradius,
not the circumradius.

\subsection{Spherical polarity}\label{subsec:polarity}
In this subsection, $\Omega$ is a smooth strictly convex domain.
We use the spherical polarity of Gerhardt
\cite{Gerhardt}*{Section~9.2}, recalled by Hu and Li
\cite{HuLi2023}*{Section~2.2}. Identify $\mathbb S^{n+1}$ with the
unit sphere in $\mathbb R^{n+2}$, and write $x\cdot y$ for the
Euclidean scalar product. The polar body is
\begin{equation}\label{eq:polarBody}
 \Omega^*
 =\{y\in\mathbb S^{n+1}:x\cdot y\leq0
                         \text{ for all }x\in\Omega\}.
\end{equation}
If $X$ parametrizes $M=\partial\Omega$ with outward unit normal
$\nu$, then the Gauss map parametrizes $M^*=\partial\Omega^*$ by
\begin{equation}\label{eq:polarGauss}
 Y=\nu,\qquad N=X,
\end{equation}
where $N$ is the outward unit normal of $M^*$. Indeed,
$\Omega^*\subset\{y:y\cdot X\leq0\}$ and $Y\cdot X=0$.
The Weingarten equation gives
\begin{equation*}
 dY=\mathcal W\,dX,\qquad dN=dX,\qquad
 \mathcal W^*=\mathcal W^{-1},\qquad
 d\mu^*=E_n(\mathcal W)\,d\mu.
\end{equation*}
Here the endomorphisms are identified by the Gauss map, as in
\cite{HuLi2023}*{Theorem~2.1}. Consequently,
\begin{equation}\label{eq:polarE}
 E_j(\mathcal W)\,d\mu
 =E_{n-j}(\mathcal W^*)\,d\mu^*,\qquad 0\leq j\leq n.
\end{equation}
Polarity reverses inclusion, satisfies $(\Omega^*)^*=\Omega$, and
sends a closed geodesic ball to
\begin{equation*}
 \bigl(\overline{B_R(p)}\bigr)^*
 =\overline{B_{\pi/2-R}(-p)}
 \qquad(0<R<\pi/2).
\end{equation*}
Thus
\begin{equation*}
 \overline{B_\rho(z)}\subset\Omega^*
 \quad\Longleftrightarrow\quad
 \Omega\subset\overline{B_{\pi/2-\rho}(-z)}.
\end{equation*}
Taking optimal radii gives
\begin{equation}\label{eq:radiusdual}
 \rho_-(\Omega^*)=\pi/2-\rho_+(\Omega).
\end{equation}

For the radial quantities, suppose that $p\in\operatorname{int}\Omega$
and $\Omega\subset B_{\pi/2}(p)$, and put $p^*=-p$. Then
$p^*\in\operatorname{int}\Omega^*$ and
$\Omega^*\subset B_{\pi/2}(p^*)$. Measure $r$ and $u$ from $p$, and
$r^*$ and $u^*$ from $p^*$. Since $u=-p\cdot\nu$ and
$u^*=-p^*\cdot N$, \eqref{eq:polarGauss} gives
\begin{equation}\label{eq:polarSupport}
 \cos r^*=p^*\cdot Y=u,
 \qquad
 u^*=-p^*\cdot N=\cos r.
\end{equation}

\subsection{Preservation and monotonicity}\label{subsec:monotonicity}
We record the normalization identities on any time interval on which
the solution of \eqref{eq:mainflow} is smooth and strictly convex. Define the time-dependent
weighted measure on $M_t$ by
\begin{equation*}
 d\omega_t=E_k(\mathcal W_t)\,d\mu_t.
\end{equation*}
Then
\begin{equation*}
 \phi(t)=\frac{\displaystyle\int_{M_t}F\,d\omega_t}
              {\displaystyle\int_{M_t}F^{-1}\,d\omega_t}.
\end{equation*}

\begin{proposition}\label{prop:moment}
Along any smooth strictly convex solution of \eqref{eq:mainflow},
\begin{equation}\label{eq:monotonicity}
\begin{aligned}
 \frac{d}{dt}\mathcal A_{k-1}(\Omega_t)&=0,\\
 \frac{d}{dt}\mathcal A_k(\Omega_t)
 &=-d_{k+1}\int_{M_t}
 \frac{F+\sqrt{\phi(t)}}{F}
 \bigl(F-\sqrt{\phi(t)}\bigr)^2\,d\omega_t
 \leq0.
\end{aligned}
\end{equation}
Moreover, $\frac{d}{dt}\mathcal A_k(\Omega_t)=0$ if and only if
$M_t$ is a geodesic sphere.
\end{proposition}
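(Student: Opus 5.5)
The plan is to compute both derivatives directly from the first variation formula \eqref{eq:Avariation}, written in the form $\frac{d}{dt}\mathcal A_{j-1}(\Omega_t)=d_j\int_{M_t}\eta E_j\,d\mu_t$. Here the normal speed is $\eta=\phi/F-F$.

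\textbf{Preservation of $\mathcal A_{k-1}$.} Take $j=k$. Then
\begin{equation*}
 \frac{d}{dt}\mathcal A_{k-1}(\Omega_t)
 =d_k\int_{M_t}\Bigl(\frac{\phi}{F}-F\Bigr)\,d\omega_t
 =d_k\Bigl(\phi\int_{M_t}F^{-1}\,d\omega_t-\int_{M_t}F\,d\omega_t\Bigr),
\end{equation*}
and this vanishes by the very definition of $\phi$. The case $k=0$ is included: there $d_0=1$, $E_0=1$, and the preserved quantity is the volume. On a strictly convex solution both integrals are positive, so $\phi>0$ is well defined.

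\textbf{Monotonicity of $\mathcal A_k$.} Take $j=k+1$. Since $E_{k+1}=FE_k$,
\begin{equation*}
 \frac{d}{dt}\mathcal A_k(\Omega_t)
 =d_{k+1}\int_{M_t}(\phi-F^2)\,d\omega_t .
\end{equation*}
To bring in the sign, I subtract $\sqrt\phi$ times the vanishing quantity $\int(\phi/F-F)\,d\omega_t=0$. Writing $s=\sqrt\phi$, a pointwise computation gives
\begin{equation*}
 \phi-F^2-s\Bigl(\frac{\phi}{F}-F\Bigr)
 =\frac{(s^2-F^2)(F-s)}{F}
 =-\frac{(F+s)(F-s)^2}{F}.
\end{equation*}
Integrating against $d\omega_t$ yields exactly the claimed formula \eqref{eq:monotonicity}. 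The integrand is nonnegative because $F>0$, $E_k>0$ and $s>0$, so $\frac{d}{dt}\mathcal A_k\le 0$. Finding the right multiple of the constraint to subtract is the only real step here; the choice $\sqrt\phi$ is forced by wanting a perfect square.

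\textbf{Equality case.} If $\frac{d}{dt}\mathcal A_k(\Omega_t)=0$, then the integrand vanishes identically on $M_t$. Since $\frac{F+\sqrt\phi}{F}E_k>0$, this gives $F\equiv\sqrt{\phi(t)}$, a constant. Koh's rigidity theorem, as recalled in Subsection~\ref{subsec:geometry}, then applies: $M_t$ is a smooth connected closed hypersurface in an open hemisphere with $E_k>0$ and $E_{k+1}/E_k$ constant, so $M_t$ is a geodesic sphere.

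Conversely, on a geodesic sphere $\mathcal W=\cot r\,\operatorname{Id}$, so $F=\cot r$ is constant. Then $\phi=F^2$ and the integrand vanishes.

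The only points needing care are that $M_t$ stays in an open hemisphere and is connected, which holds for the strictly convex solution considered.
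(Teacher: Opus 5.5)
Your proposal is correct and is essentially the paper's proof. The paper also uses $\int_{M_t}\eta\,d\omega_t=0$ to replace $F$ by $F-\sqrt{\phi(t)}$ in $d_{k+1}\int_{M_t}F\eta\,d\omega_t$, reaching the same perfect-square integrand, then applies Koh's rigidity theorem for the equality case and the identity $\phi=F^2$ for the converse on geodesic spheres.
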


\begin{proof}
Let $\eta=\phi/F-F$. By the definition of $\phi$,
\begin{equation*}
 \int_{M_t}\eta\,d\omega_t
 =\phi(t)\int_{M_t}F^{-1}\,d\omega_t
  -\int_{M_t}F\,d\omega_t=0.
\end{equation*}
The first variation formula \eqref{eq:Avariation} therefore gives
\begin{equation*}
 \frac{d}{dt}\mathcal A_{k-1}(\Omega_t)=0
\end{equation*}
and
\begin{equation*}
\begin{aligned}
 \frac{d}{dt}\mathcal A_k(\Omega_t)
 &=d_{k+1}\int_{M_t}F\eta\,d\omega_t\\
 &=d_{k+1}\int_{M_t}\bigl(F-\sqrt{\phi(t)}\bigr)
                      \left(\frac{\phi(t)}F-F\right)\,d\omega_t\\
 &=-d_{k+1}\int_{M_t}\frac{F+\sqrt{\phi(t)}}{F}
                      \bigl(F-\sqrt{\phi(t)}\bigr)^2\,d\omega_t.
\end{aligned}
\end{equation*}
All weights are positive, so equality holds if and only if
$F=\sqrt{\phi(t)}$ on $M_t$. Since $M_t$ is closed and embedded in
an open hemisphere, with $E_k>0$, the rigidity theorem of Koh \cite{Koh}, in the form recalled in \cite{KohLee}*{Theorem~1} implies that $M_t$ is a geodesic sphere. Conversely, on a geodesic sphere $F$ is constant and the normalization gives $\phi(t)=F^2$, so $\frac{d}{dt}\mathcal A_k(\Omega_t)=0$.
\end{proof}

\subsection{Evolution equations}\label{subsec:evolution}
We rewrite the flow \eqref{eq:mainflow} as
\begin{equation}\label{eq:speed}
 \partial_tX=-G\nu,\qquad G=F-\frac{\phi(t)}F,
 \qquad F=\frac{E_{k+1}}{E_k}.
\end{equation}
The global term $\phi$ is defined in \eqref{eq:mainflow}. In the spatial
calculations below it is held fixed. By the chain rule,
\begin{equation}\label{eq:compositeDerivatives}
\begin{aligned}
 \dot G^{ij}&=\left(1+\frac\phi{F^2}\right)\dot F^{ij},\\
 \ddot G^{ij,k\ell}
 &=\left(1+\frac\phi{F^2}\right)\ddot F^{ij,k\ell}
                         -\frac{2\phi}{F^3}\dot F^{ij}\dot F^{k\ell}.
\end{aligned}
\end{equation}
The corresponding linearized spatial operator is
\begin{equation}\label{eq:linearizedOperator}
 \mathcal L=\dot G^{ij}\nabla_i\nabla_j
     =\left(1+\frac\phi{F^2}\right)\dot F^{ij}\nabla_i\nabla_j.
\end{equation}
This is the local spatial operator, not the full linearization of the
nonlocal equation. The following identities use only that $\phi$ is
positive and constant on each time slice. Thus they also apply to the
polar equation after the substitutions made in
Section~\ref{subsec:polarflow}.

\begin{lemma}
\label{lem:evolution}
Along \eqref{eq:speed}, the metric and area element satisfy
\begin{equation}\label{eq:variations}
 \partial_tg_{ij}=-2Gh_{ij},
 \qquad
 \partial_tg^{ij}=2Gh^{ij},
 \qquad
 \partial_t(d\mu_t)=-GH\,d\mu_t.
\end{equation}
With $\mathcal L$ as in \eqref{eq:linearizedOperator}, we have
\begin{equation}\label{eq:Bevol}
\begin{aligned}
 (\partial_t-\mathcal L)h^i{}_j={}&
 \ddot G^{k\ell,pq}\nabla^i h_{k\ell}\nabla_j h_{pq}\\
 &+\left(1+\frac{\phi}{F^2}\right)
 \dot F^{k\ell}\bigl((h^2)_{k\ell}-g_{k\ell}\bigr)h^i{}_j
 -\frac{2\phi}{F}h^i{}_kh^k{}_j
 +2F\delta^i_j,
\end{aligned}
\end{equation}
\begin{equation}\label{eq:hevol}
\begin{aligned}
 (\partial_t-\mathcal L)h_{ij}={}&
 \ddot G^{k\ell,pq}\nabla_i h_{k\ell}\nabla_j h_{pq}\\
 &+\left(1+\frac{\phi}{F^2}\right)
 \dot F^{k\ell}\bigl((h^2)_{k\ell}-g_{k\ell}\bigr)h_{ij}
 -2F(h^2)_{ij}
 +2Fg_{ij},
\end{aligned}
\end{equation}
and
\begin{equation}\label{eq:HpinchEvolution}
\begin{aligned}
 (\partial_t-\mathcal L)H={}&
 \ddot G^{ij,k\ell}\nabla_p h_{ij}\nabla^p h_{k\ell}\\
 &+\left(1+\frac{\phi}{F^2}\right)
 \dot F^{ij}\bigl((h^2)_{ij}-g_{ij}\bigr)H
 -\frac{2\phi}{F}|A|^2
 +2nF.
\end{aligned}
\end{equation}
Furthermore,
\begin{equation}\label{eq:Fscalar}
 (\partial_t-\mathcal L)F
 =
 -\frac{2\phi}{F^3}
 \dot F^{ij}\nabla_iF\nabla_jF
 +
 G\dot F^{ij}\bigl((h^2)_{ij}+g_{ij}\bigr),
\end{equation}
\begin{equation}\label{eq:reciprocal}
 (\partial_t-\mathcal L)F^{-1}
 =
 -\frac{2}{F^3}
 \dot F^{ij}\nabla_iF\nabla_jF
 -
 \frac{G}{F^2}
 \dot F^{ij}\bigl((h^2)_{ij}+g_{ij}\bigr),
\end{equation}
and the support function about a fixed point satisfies
\begin{equation}\label{eq:supportEvolution}
 (\partial_t-\mathcal L)u
 =
 -2F\cos r
 +
 u\left(1+\frac{\phi}{F^2}\right)
 \dot F^{ij}(h^2)_{ij}.
\end{equation}
For $1\leq j\leq n$,
\begin{equation}\label{eq:Ejvariation}
 \partial_tE_j
 =
 \dot E_j^{k\ell}\nabla_k\nabla_\ell G
 +
 G\bigl(HE_j-(n-j)E_{j+1}+jE_{j-1}\bigr).
\end{equation}
\end{lemma}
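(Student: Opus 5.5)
The identities in Lemma~\ref{lem:evolution} are derived from the general variation formulas for a normal flow $\partial_tX=-G\nu$ in a space form of curvature $1$. The metric formulas \eqref{eq:variations} follow at once from $\partial_t g_{ij}=-2Gh_{ij}$, together with $\partial_tg^{ij}=-g^{ik}g^{j\ell}\partial_tg_{k\ell}$ and $\partial_t d\mu=\frac12 g^{ij}\partial_tg_{ij}\,d\mu$. For the second fundamental form we use the standard identity
\begin{equation*}
 \partial_t h_{ij}=\nabla_i\nabla_jG-G\bigl((h^2)_{ij}-g_{ij}\bigr).
\end{equation*}
The $+g_{ij}$ comes from the ambient curvature term $\overline R(\nu,\partial_i,\nu,\partial_j)=g_{ij}$. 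We then express $\nabla_i\nabla_jG$ through the chain rule,
\begin{equation*}
 \nabla_i\nabla_jG=\dot G^{k\ell}\nabla_i\nabla_jh_{k\ell}+\ddot G^{k\ell,pq}\nabla_ih_{k\ell}\nabla_jh_{pq},
\end{equation*}
where $\phi$ is frozen on the slice.

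The key step is to commute derivatives using the Simons identity in the sphere. Applying Codazzi twice and Gauss with $R_{ijk\ell}$ as given, we get
\begin{equation*}
 \nabla_i\nabla_jh_{k\ell}=\nabla_k\nabla_\ell h_{ij}+h_{ij}(h^2)_{k\ell}-h_{k\ell}(h^2)_{ij}+h_{k\ell}g_{ij}-h_{ij}g_{k\ell}
 +(\text{terms antisymmetric in }(k,\ell)\text{ and killed by }\dot G^{k\ell}\text{ in a principal frame}).
\end{equation*}
Contracting with $\dot G^{k\ell}$ and using $\dot G^{k\ell}h_{k\ell}=(1+\phi/F^2)F=G+2\phi/F$, which follows from \eqref{eq:compositeDerivatives} and \eqref{eq:minmaxF}, yields
\begin{equation*}
 \dot G^{k\ell}\nabla_i\nabla_jh_{k\ell}=\mathcal Lh_{ij}+\Bigl(1+\frac{\phi}{F^2}\Bigr)\dot F^{k\ell}\bigl((h^2)_{k\ell}-g_{k\ell}\bigr)h_{ij}-\Bigl(F+\frac{\phi}{F}\Bigr)\bigl((h^2)_{ij}-g_{ij}\bigr).
\end{equation*}
Adding $-G((h^2)_{ij}-g_{ij})$ with $G=F-\phi/F$ combines the zero-order terms into $-2F((h^2)_{ij}-g_{ij})$, which is \eqref{eq:hevol}. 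Raising an index with \eqref{eq:variations} contributes $2Gh^{ik}h_{kj}$, and $-2F+2G=-2\phi/F$ gives \eqref{eq:Bevol}. Tracing gives \eqref{eq:HpinchEvolution}.

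For $F$ we contract \eqref{eq:Bevol} with $\dot F^j{}_i$, giving $\partial_tF=\dot F^{ij}\nabla_i\nabla_jG+G\dot F^{ij}((h^2)_{ij}+g_{ij})$. The last term comes from $\partial_th_{ij}$ together with $\partial_tg^{ij}$, which combine to $+G(h^2)$ and $+Gg$. Since $\nabla G=(1+\phi/F^2)\nabla F$, we have
\begin{equation*}
 \nabla^2G=\Bigl(1+\frac{\phi}{F^2}\Bigr)\nabla^2F-\frac{2\phi}{F^3}\nabla F\otimes\nabla F,
\end{equation*}
which gives \eqref{eq:Fscalar}. Then \eqref{eq:reciprocal} follows from $\mathcal LF^{-1}=-F^{-2}\mathcal LF+2F^{-3}(1+\phi/F^2)\dot F^{ij}\nabla_iF\nabla_jF$, where the gradient terms combine to $-2F^{-3}\dot F^{ij}\nabla_iF\nabla_jF$.

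For $u$ we write $\partial_t\nu=\nabla G$ as a tangential vector. With $V=\sin r\,\partial_r$ and \eqref{equ-conformal},
\begin{equation*}
 \partial_tu=-G\cos r+\langle V,\partial_k\rangle\nabla^kG .
\end{equation*}
From \eqref{eq:supportHessians}, we have $\mathcal Lu=\dot G^{ij}(\sin r\,\nabla^kr\nabla_kh_{ij})+\cos r\,\dot G^{ij}h_{ij}-u\dot G^{ij}(h^2)_{ij}$. The first term equals $\langle V,\partial_k\rangle\nabla^kG$, so it cancels. What remains is $-\cos r(G+F+\phi/F)=-2F\cos r$ plus the $u$ term, which is \eqref{eq:supportEvolution}. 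Finally, \eqref{eq:Ejvariation} comes from $\partial_tE_j=\dot E_j^{k}{}_{\ell}\,\partial_th^\ell{}_k$ with $\partial_th^\ell{}_k=\nabla^\ell\nabla_kG+G((h^2)^\ell{}_k+\delta^\ell_k)$, combined with the identities \eqref{eq:Ejidentities}.

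The main obstacle is bookkeeping the zero-order terms in the Simons commutation, in particular the sign of the ambient-curvature contributions and the exact collapse to $-2F(h^2)+2Fg$. I will verify this step on a geodesic sphere, where $h=\cot r\,g$ and every gradient term vanishes, as a consistency check.
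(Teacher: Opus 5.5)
Your proposal is correct and follows essentially the same route as the paper: the first variation formulas for normal speed $-G$, the contracted Simons identity in $\mathbb S^{n+1}$ with $\phi$ frozen on each time slice, the chain rule for $G=F-\phi/F$, and direct computations for $u$ and $E_j$. The paper derives the $h^i{}_j$ equation first and lowers the index, while you derive \eqref{eq:hevol} first and raise it. There are two minor wording slips, neither of which affects the result. First, the leftover commutator terms in your uncontracted Simons identity are not antisymmetric in $(k,\ell)$; they vanish after contraction because $\dot G^{k\ell}$ is diagonal in a principal frame. Second, the $F$ equation comes from contracting the uncommuted evolution $\partial_th^i{}_j=\nabla^i\nabla_jG+G\bigl((h^2)^i{}_j+\delta^i_j\bigr)$, not \eqref{eq:Bevol}.
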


\begin{proof}
The first variation formulas in \cite{CGLS}*{Proposition~2.8}, with
normal speed $-G$, give \eqref{eq:variations} and
\begin{equation}\label{eq:uncommutedEvolution}
 \partial_t h^i{}_j
 =
 \nabla^i\nabla_jG
 +
 G\bigl(h^i{}_kh^k{}_j+\delta^i_j\bigr).
\end{equation}
The contracted Simons identity
\cite{CGLS}*{Lemma~2.12} gives
\begin{equation}\label{eq:Simons}
\begin{aligned}
 \nabla_i\nabla_jF={}&
 \dot F^{k\ell}\nabla_k\nabla_\ell h_{ij}
 +
 \ddot F^{k\ell,pq}
 \nabla_i h_{k\ell}\nabla_j h_{pq}\\
 &+
 \dot F^{k\ell}\bigl((h^2)_{k\ell}-g_{k\ell}\bigr)h_{ij}
 -
 F(h^2)_{ij}
 +
 Fg_{ij}.
\end{aligned}
\end{equation}
Using \eqref{eq:compositeDerivatives} in
\eqref{eq:uncommutedEvolution} and then
\eqref{eq:Simons}, we obtain \eqref{eq:Bevol}. Lowering the index and using
\eqref{eq:variations} gives \eqref{eq:hevol}.
Tracing \eqref{eq:Bevol} gives \eqref{eq:HpinchEvolution}.

Applying the chain rule to \eqref{eq:uncommutedEvolution},
\begin{equation*}
\begin{aligned}
 \partial_tF
 &=
 \dot F^{ij}\nabla_i\nabla_jG
 +
 G\dot F^{ij}\bigl((h^2)_{ij}+g_{ij}\bigr)\\
 &=
 \mathcal LF
 -
 \frac{2\phi}{F^3}
 \dot F^{ij}\nabla_iF\nabla_jF
 +
 G\dot F^{ij}\bigl((h^2)_{ij}+g_{ij}\bigr),
\end{aligned}
\end{equation*}
which proves \eqref{eq:Fscalar}. Since
\begin{equation*}
 \mathcal L(F^{-1})
 =
 -F^{-2}\mathcal LF
 +
 2F^{-3}
 \left(1+\frac{\phi}{F^2}\right)
 \dot F^{ij}\nabla_iF\nabla_jF,
\end{equation*}
\eqref{eq:reciprocal} follows directly.

For the support function, using
$\overline\nabla_{\partial_t}\nu=\nabla G$ and
\eqref{equ-conformal}, we obtain
\begin{equation*}
 \partial_tu
 =
 -G\cos r
 +
 \sin r\,g(\nabla r,\nabla G).
\end{equation*}
On the other hand, \eqref{eq:supportHessians} and Euler's identity give
\begin{equation*}
 \mathcal Lu
 =
 \left(1+\frac{\phi}{F^2}\right)
 \left(
 \sin r\,g(\nabla r,\nabla F)
 +
 F\cos r
 -
 u\dot F^{ij}(h^2)_{ij}
 \right).
\end{equation*}
Since
\begin{equation*}
 \nabla G
 =
 \left(1+\frac{\phi}{F^2}\right)\nabla F,
\end{equation*}
subtraction gives \eqref{eq:supportEvolution}.

Finally, \eqref{eq:Ejvariation} follows from
\eqref{eq:uncommutedEvolution}, the chain rule, and
\eqref{eq:Ejidentities}.
\end{proof}

\section{Curvature pinching estimates}\label{sec:flow}
In this section we prove that strict convexity is preserved along
\eqref{eq:speed} and establish a curvature pinching estimate which
improves exponentially in time.

Let $M_t$, $t\in[0,T_{\max})$, denote the maximal smooth parabolic
solution of \eqref{eq:speed} given by the short-time existence theory
recalled in Section~\ref{subsec:main}.

\subsection{Lower test functions for the least principal curvature}
\label{subsec:eigenvalue}

We first recall the lower test function lemma of Brendle, Choi, and Daskalopoulos \cite{BCD}*{Lemma 5} for the least principal curvature.

\begin{lemma}\label{lem:BCD}
Suppose that $\varphi$ is a smooth function on $M$ such that
$\kappa_1\geq\varphi$ everywhere and $\kappa_1=\varphi$ at $\bar p$.
Let $\mu$ denote the multiplicity of the smallest principal curvature at
$\bar p$, so that
\begin{equation*}
 \kappa_1=\cdots=\kappa_\mu<\kappa_{\mu+1}\leq\cdots\leq\kappa_n.
\end{equation*}
The sums over $j>\mu$ below are understood to be empty if $\mu=n$. Choose an orthonormal principal frame at $\bar p$. Then, at $\bar p$,
\begin{equation}\label{eq:BCDfirst}
 \nabla_i h_{j\ell}=\nabla_i\varphi\,\delta_{j\ell},\qquad 1\leq i\leq n,\ 1\leq j,\ell\leq\mu.
\end{equation}
Moreover, for each $i$,
\begin{equation}\label{eq:BCDsecond}
 \nabla_i\nabla_i\varphi\leq\nabla_i\nabla_i h_{11}
 -2\sum_{j>\mu}\frac{(\nabla_i h_{1j})^2}{\kappa_j-\kappa_1}.
\end{equation}
\end{lemma}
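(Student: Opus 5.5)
The plan is to reduce everything to calculus along curves through $\bar p$, comparing the smooth function $\varphi$ with a smooth quantity that stays above the least eigenvalue. Work in an orthonormal principal frame $\{e_i\}$ at $\bar p$ and extend it locally, for instance by parallel transport along geodesics from $\bar p$, so that $\nabla e_i=0$ at $\bar p$.

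\textbf{Step 1: the first-order identity.} For any unit vector field $v$ near $\bar p$ that is smooth, the quadratic form satisfies $h(v,v)\geq\kappa_1\geq\varphi$. The function $h(v,v)-\varphi$ is therefore nonnegative near $\bar p$. If moreover $v(\bar p)$ lies in the $\kappa_1$-eigenspace $\operatorname{span}\{e_1,\dots,e_\mu\}$, this function vanishes at $\bar p$.

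\begin{itemize}
\item Take $v=\sum_{j\le\mu}a_je_j$ with $|a|=1$. Vanishing of the gradient at the minimum $\bar p$ gives $\nabla_i h_{j\ell}a_ja_\ell=\nabla_i\varphi$ for every such unit $a$.
\item Polarizing this quadratic identity in $a$ yields \eqref{eq:BCDfirst}.
\end{itemize}

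\textbf{Step 2: the second-order inequality.} Fix $i$ and let $\gamma(s)$ be the geodesic with $\gamma(0)=\bar p$ and $\gamma'(0)=e_i$. Consider the vector field
\begin{equation*}
 v(s)=e_1+s\sum_{j>\mu}b_je_j,\qquad b_j=-\frac{\nabla_ih_{1j}}{\kappa_j-\kappa_1},
\end{equation*}
where the $e_j$ are parallel along $\gamma$, and form
\begin{equation*}
 q(s)=\frac{h(v,v)}{|v|^2}-\varphi(\gamma(s))\geq0,\qquad q(0)=0.
\end{equation*}
Then $q'(0)=0$ and $q''(0)\geq0$. Expanding to second order, with $|v|^2=1+s^2|b|^2$, gives
\begin{equation*}
 q''(0)=\nabla_i\nabla_ih_{11}+4\sum_{j>\mu}b_j\nabla_ih_{1j}+2\sum_{j>\mu}b_j^2(\kappa_j-\kappa_1)-\nabla_i\nabla_i\varphi.
\end{equation*}
Here the term $2\sum_j b_j^2\kappa_j$ comes from $h(v,v)$, and the term $-2\kappa_1|b|^2$ comes from the normalization by $|v|^2$. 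Optimizing over $b$ at the choice above, this equals
\begin{equation*}
 \nabla_i\nabla_ih_{11}-2\sum_{j>\mu}\frac{(\nabla_ih_{1j})^2}{\kappa_j-\kappa_1}-\nabla_i\nabla_i\varphi\geq0,
\end{equation*}
which is \eqref{eq:BCDsecond}.

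\textbf{Main obstacle.} The only delicate step is the second-order expansion in Step 2. One must use parallel fields and geodesics so that no Christoffel terms appear, so that $\frac{d^2}{ds^2}h(e_1,e_1)=\nabla_i\nabla_ih_{11}$ at $s=0$. One must also keep the cross term $2s\,b_j\nabla_ih_{1j}$ from $h(v,v)$, which contributes $4b_j\nabla_ih_{1j}$ to the second derivative. Alternatively, the whole statement can be quoted from \cite{BCD}*{Lemma 5}.
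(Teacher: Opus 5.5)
Your proof is correct. Step 1 takes the vanishing gradient of $h(v,v)-\varphi$ at its minimum for unit $v$ in the $\kappa_1$-eigenspace and then polarizes. Step 2 expands the Rayleigh quotient along the geodesic with the parallel test vector $e_1+s\sum_{j>\mu}b_je_j$, and at the minimizing choice $b_j=-\nabla_ih_{1j}/(\kappa_j-\kappa_1)$ this gives exactly $\nabla_i\nabla_ih_{11}-2\sum_{j>\mu}(\nabla_ih_{1j})^2/(\kappa_j-\kappa_1)-\nabla_i\nabla_i\varphi\geq0$. The paper gives no proof of its own and simply quotes \cite{BCD}*{Lemma~5}; your argument follows the standard proof of that lemma, so it makes the cited result self-contained.
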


We next derive the corresponding time comparison.
\begin{lemma}\label{lem:parabolicContact}
	Let $M_t$, $t\in[0,T_{\max})$, be a smooth solution
	of \eqref{eq:speed}. Suppose that a smooth function $\psi$
	satisfies
	\begin{equation*}
	\psi\leq\kappa_1
	\quad\text{on }U\times(t_0-\eta,t_0],
	\qquad
	\psi(x_0,t_0)=\kappa_1(x_0,t_0),
	\end{equation*}
	where $U$ is a neighborhood of $x_0$ and
	$0<\eta<t_0<T_{\max}$.
	Choose an orthonormal principal frame at $(x_0,t_0)$, and let
	$\mu$ denote the multiplicity of $\kappa_1$ there, so that
	\begin{equation*}
	\kappa_1=\cdots=\kappa_\mu,
	\qquad
	\kappa_j>\kappa_1\quad(j>\mu).
	\end{equation*}
	Then, at $(x_0,t_0)$,
	\begin{equation}\label{eq:eigenvalueContact}
		\begin{aligned}
			(\partial_t-\mathcal L)\psi\geq{}&
			\ddot G^{ij,k\ell}\nabla_1h_{ij}\nabla_1h_{k\ell}
			+2\sum_{i=1}^n\sum_{j>\mu}
			\frac{\dot G^i(\nabla_i h_{1j})^2}{\kappa_j-\kappa_1}\\
			&+\left(1+\frac{\phi}{F^2}\right)
			\dot F^{ij}\bigl((h^2)_{ij}-g_{ij}\bigr)\kappa_1
			-\frac{2\phi}{F}\kappa_1^2+2F,
		\end{aligned}
	\end{equation}
	where $\mathcal L$ is defined by \eqref{eq:linearizedOperator}.
	The sum over $j>\mu$ is understood to be empty if $\mu=n$.
\end{lemma}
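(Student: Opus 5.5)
The plan is to combine the time monotonicity of the contact with the spatial conclusions of Lemma~\ref{lem:BCD}, applied on the slice $t=t_0$, and then substitute the evolution equation \eqref{eq:hevol} for $h_{11}$.

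\emph{Time derivative.} Since $\psi\leq\kappa_1$ on $U\times(t_0-\eta,t_0]$ with equality at $(x_0,t_0)$, the difference $\kappa_1-\psi$ attains a minimum over earlier times at $t_0$. Because $\kappa_1$ may fail to be differentiable, I will compare with the smooth function $h_{11}(x_0,t)=h(e_1,e_1)$. Here $e_1$ is extended as a fixed vector (or via the frame transported so that $\partial_t$ commutes with the orthonormalization, as in \eqref{eq:hevol}). Then $\kappa_1(x_0,t)\leq h_{11}(x_0,t)$ for the $g_t$-normalized extension, with equality at $t_0$. Hence $\psi(x_0,t)\leq h_{11}(x_0,t)$ for $t\leq t_0$ with equality at $t_0$, which gives $\partial_t\psi\geq\partial_t h_{11}$ at $(x_0,t_0)$. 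To handle the metric change correctly, I will use $h^1{}_1$ and \eqref{eq:Bevol}, or equivalently $h(e,e)/g(e,e)$ with a fixed coordinate vector. Its time derivative equals $(\partial_t h^i{}_j)$ contracted in the principal frame, which is exactly the left side of \eqref{eq:Bevol} at $i=j=1$.

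\emph{Spatial part.} On the slice $t_0$, $\psi(\cdot,t_0)$ is a lower test function for $\kappa_1$ touching at $x_0$, so Lemma~\ref{lem:BCD} applies. We get \eqref{eq:BCDfirst}, and for each $i$,
\[
\nabla_i\nabla_i\psi\leq\nabla_i\nabla_ih_{11}-2\sum_{j>\mu}\frac{(\nabla_ih_{1j})^2}{\kappa_j-\kappa_1}.
\]
In the principal frame, $\dot G^{ij}=\dot G^i\delta_{ij}$ with $\dot G^i>0$. Multiplying by $\dot G^i$ and summing gives
\[
\mathcal L\psi\leq\mathcal Lh_{11}-2\sum_i\sum_{j>\mu}\frac{\dot G^i(\nabla_ih_{1j})^2}{\kappa_j-\kappa_1}.
\]
Subtracting, $(\partial_t-\mathcal L)\psi\geq(\partial_t-\mathcal L)h^1{}_1+2\sum\dot G^i(\nabla_ih_{1j})^2/(\kappa_j-\kappa_1)$.

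\emph{Substitution.} I will insert \eqref{eq:Bevol} at $i=j=1$, using $h^1{}_1=\kappa_1$ and $h^1{}_kh^k{}_1=\kappa_1^2$. The gradient term is $\ddot G^{k\ell,pq}\nabla_1h_{k\ell}\nabla_1h_{pq}$, since $\nabla^1=\nabla_1$ in an orthonormal frame. The zeroth order terms match \eqref{eq:eigenvalueContact} exactly.

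The only delicate point is the time comparison. There I must make sure the quantity differentiated is the mixed tensor $h^1{}_1$, so that the metric evolution is already accounted for; this is why \eqref{eq:Bevol} rather than \eqref{eq:hevol} is used. I also need to check that $\kappa_1(x_0,t)\leq$ the Rayleigh quotient $h(e_1,e_1)/g_t(e_1,e_1)$ for a fixed tangent vector $e_1$. This is immediate from the min-max characterization. Its $t$-derivative at $t_0$ equals $\partial_t h_{11}+2Gh_{11}h_{11}$ in the frame, which equals $\partial_t h^1{}_1$ there.
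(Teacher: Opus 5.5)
Your proposal is correct and takes essentially the same route as the paper. Both arguments compare $\psi$ in time with the Rayleigh quotient of a fixed vector, apply Lemma~\ref{lem:BCD} on the slice $t=t_0$ weighted by $\dot G^i>0$, and substitute the curvature evolution. The only difference is cosmetic: you insert the mixed-tensor equation \eqref{eq:Bevol} at $i=j=1$ directly. The paper instead uses \eqref{eq:hevol}, adds the metric-variation term $2G\kappa_1^2$, and simplifies with $G=F-\phi/F$. Both give \eqref{eq:eigenvalueContact}.
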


\begin{proof}
	At $x_0$, keep the tangent vector $v=e_1\in T_{x_0}M$ fixed
	as $t$ varies. For $t\leq t_0$ sufficiently close to $t_0$,
	\begin{equation*}
	\frac{h(v,v)}{g(v,v)}-\psi(x_0,t)
	\geq \kappa_1(x_0,t)-\psi(x_0,t)\geq0,
	\end{equation*}
	with equality at $t=t_0$. The left time derivative of this
	expression at $t_0$ is therefore nonpositive.
	Using \eqref{eq:variations}, we obtain
	\begin{equation}\label{eq:eigenvalueTimeContact}
		\partial_t\psi
		\geq (\partial_t h)_{11}
		-\kappa_1(\partial_tg)_{11}
		=(\partial_t h)_{11}+2G\kappa_1^2
	\end{equation}
	at $(x_0,t_0)$.
	
	Apply Lemma~\ref{lem:BCD} on the time slice $t=t_0$ to
	$\varphi=\psi(\cdot,t_0)$. Since
	$\dot G^{ij}=\dot G^i\delta_{ij}$ with $\dot G^i>0$
	in the chosen frame, we obtain
	\begin{equation*}
	\mathcal L\psi
	\leq (\mathcal Lh)_{11}
	-2\sum_{i=1}^n\sum_{j>\mu}
	\frac{\dot G^i(\nabla_i h_{1j})^2}{\kappa_j-\kappa_1}.
	\end{equation*}
	Combining this with \eqref{eq:eigenvalueTimeContact} and
	\eqref{eq:hevol}, and using $G=F-\phi/F$, proves
	\eqref{eq:eigenvalueContact}.
\end{proof}

\subsection{Preservation of strict convexity}

\begin{lemma}
\label{prop:convexity}
If $M_0$ is strictly convex, then along \eqref{eq:speed} the evolving hypersurface $M_t$ remains strictly convex for
all $t\in[0,T_{\max})$.
\end{lemma}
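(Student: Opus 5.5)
The plan is a maximum principle argument for the least principal curvature. It uses the contact inequality of Lemma~\ref{lem:parabolicContact} with a test function $\psi$ that is constant in space and time. Fix $T<T_{\max}$ and suppose, for contradiction, that $\min_{M_t}\kappa_1$ reaches zero on $[0,T]$.

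\emph{Step 1: uniform bounds on $[0,T]$.} The speed $G=F-\phi/F$ requires $F>0$. I will use that the smooth parabolic solution on the compact interval $[0,T]$ has $F\geq c_T>0$ and $\phi\leq C_T$. I will also use that $h^i{}_j$ and its derivatives are bounded there. The justification I intend is that the solution is smooth up to $T$ and stays in the cone on which $F$ is defined and parabolic. This step is the one I am least sure of (see the last paragraph).

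\emph{Step 2: the first contact time.} Since $M_0$ is strictly convex, $\kappa_1>0$ at $t=0$. Choose $\delta>0$ smaller than $\min_{M_0}\kappa_1$ and small in terms of $c_T$ and $C_T$, as fixed in Step 4. Let $t_0\in(0,T]$ be the first time with $\min_{M_{t_0}}\kappa_1=\delta$, attained at $x_0$. Put $\psi\equiv\delta$. Then $\psi\leq\kappa_1$ on $M\times[0,t_0]$, with equality at $(x_0,t_0)$. Lemma~\ref{lem:parabolicContact} applies, and the left-hand side of \eqref{eq:eigenvalueContact} is $(\partial_t-\mathcal L)\psi=0$.

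\emph{Step 3: the gradient terms are nonnegative.} By Lemma~\ref{lem:BCD}, $\nabla_i h_{j\ell}=\nabla_i\psi\,\delta_{j\ell}=0$ for $j,\ell\leq\mu$. Set $\xi_{ij}=\nabla_1h_{ij}=\nabla_ih_{1j}$, which is symmetric by Codazzi. Then $\xi_{ij}=0$ whenever $i,j\leq\mu$. By Lemma~\ref{lem:Gconcavity}, $G$ is inverse-concave, so \eqref{eq:inverseConcavity} gives
\begin{equation*}
\ddot G^{ij,k\ell}\xi_{ij}\xi_{k\ell}\geq-2\sum_{i,j}\frac{\dot G^i}{\kappa_j}\xi_{ij}^2 .
\end{equation*}
I bound the right-hand sum pair by pair.
\begin{itemize}
\item Pairs with $j>\mu$: here $1/\kappa_j\leq 1/(\kappa_j-\kappa_1)$, so these terms are absorbed by the term $2\sum_i\sum_{j>\mu}\dot G^i(\nabla_ih_{1j})^2/(\kappa_j-\kappa_1)$ in \eqref{eq:eigenvalueContact}.
\item Pairs with $j\leq\mu<i$: these do not fit the same way. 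I would handle them with Lemma~\ref{lem:AndrewsGradient}, or by the symmetric-swap estimate used in \cite{BCD}, to show that the combined gradient contribution is nonnegative.
\end{itemize}
If this balancing becomes delicate, a fallback is available. Lemma~\ref{lem:AndrewsGradient} is stated for ratios $\kappa_1=\varepsilon H$, but the analogous computation with $\psi$ constant should go through.

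\emph{Step 4: the zeroth-order terms are positive.} Using $\kappa_1=\delta$, \eqref{eq:quotientTraces}, $F\geq\kappa_1$ and $F\geq c_T$, the zeroth-order part of \eqref{eq:eigenvalueContact} is at least
\begin{equation*}
2F-(k+1)\Bigl(1+\frac{\phi}{F^2}\Bigr)\delta-\frac{2\phi}{F}\delta^2\geq 2c_T-(k+1)\bigl(1+C_Tc_T^{-2}\bigr)\delta-2C_Tc_T^{-1}\delta^2 .
\end{equation*}
This is positive once $\delta$ is small. Combined with Step 3, the right-hand side of \eqref{eq:eigenvalueContact} is strictly positive, contradicting $(\partial_t-\mathcal L)\psi=0$. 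Hence $\kappa_1>\delta$ on $[0,T]$. Since $T<T_{\max}$ was arbitrary, strict convexity holds on $[0,T_{\max})$. The good term $2F$ here comes from the ambient curvature in \eqref{eq:hevol}.

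\emph{Main obstacle.} I expect Step 1 to be the hardest point. When $k=n-1$, $F=E_n/E_{n-1}\to0$ as $\kappa_1\to0$, so a uniform $c_T$ cannot be taken for granted. It must come from smoothness of the maximal solution, meaning the flow being parabolic and well defined, on $[0,T]$. If that fails, I would instead use a test function $\psi=\lambda F$ with $\lambda>0$ small. Using \eqref{eq:Fscalar}, the zeroth-order terms should again have a favorable sign near $\kappa_1=0$, while the first-order gradient term $-\frac{2\phi}{F^3}\dot F^{ij}\nabla_iF\nabla_jF$ in \eqref{eq:Fscalar} would have to be absorbed using Lemma~\ref{lem:AndrewsGradient}. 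That cross-term bookkeeping would be the real work.
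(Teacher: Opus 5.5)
Your argument is essentially the paper's proof. It is a first-contact argument for $\kappa_1$ against a small spatially constant barrier via Lemma~\ref{lem:parabolicContact}. It combines the null-gradient condition of Lemma~\ref{lem:BCD} with Codazzi, and uses the inverse-concavity of $G=G_{\phi(t_0)}$ from Lemma~\ref{lem:Gconcavity}.

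The one step you leave open, the pairs $j\leq\mu<i$ in Step~3, is not an obstacle, because those terms vanish identically. With $\nabla\psi=0$, Lemma~\ref{lem:BCD} gives $\nabla_ih_{j\ell}=0$ for every $i$ and all $j,\ell\leq\mu$. Since $1\leq\mu$, total symmetry of $\nabla h$ gives $\xi_{ij}=\nabla_ih_{1j}=\nabla_jh_{1i}$. This is zero as soon as \emph{one} of $i,j$ is at most $\mu$, not only when both are. Hence only pairs with $i,j>\mu$ survive in $2\sum_{i,j}\dot G^i\xi_{ij}^2/\kappa_j$. These are absorbed exactly as in your first bullet, using $0<\kappa_j-\kappa_1<\kappa_j$; this is the paper's estimate \eqref{eq:convexityGradient}. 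No appeal to Lemma~\ref{lem:AndrewsGradient} or to a symmetric-swap estimate is needed.

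Your Step~1 concern is resolved just as you intend, and as the paper does. On the compact interval $[0,T]$ with $T<T_{\max}$ the solution is smooth and parabolic, so $F\geq c_T>0$ and $\phi\leq C_T$ by continuity. The fallback with $\psi=\lambda F$ is therefore unnecessary.

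The only genuine difference is the barrier. The paper takes $\delta(t)=\delta_0e^{-C_Tt}$ with $C_T$ large, so that $(\partial_t-\mathcal L)\delta=-C_T\delta$ dominates the bad zeroth-order terms, and $2F$ only supplies strictness. You keep $\delta$ constant and small relative to $c_T$ and $C_T$, and let the ambient-curvature term $2F\geq 2c_T$ absorb them. Both versions use the same $T$-dependent bounds, and both are valid.
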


\begin{proof}
Fix $T<T_{\max}$ and choose $0<\delta_0<\min_{M_0}\kappa_1$. Since the solution is smooth and parabolic on $[0,T]$, $F$ has a
positive lower bound and $\phi$ is bounded above on this interval. Set
\begin{equation*}
 C_T=1+\max_{M\times[0,T]}
       \left\{(k+1)\left(1+\frac \phi{F^2}\right)
                              +\frac{2\phi\delta_0}F\right\},
 \qquad \delta(t)=\delta_0e^{-C_Tt}.
\end{equation*}

Suppose that $\kappa_1\geq\delta$ does not hold on $M\times[0,T]$. Since $\kappa_1(\cdot,0)>\delta_0$, there is a first contact point $(x_0,t_0)$ with $t_0>0$ such that
\begin{equation*}
 \kappa_1(x_0,t_0)=\delta(t_0).
\end{equation*}
Choose an orthonormal principal frame at $(x_0,t_0)$ and let $\mu$ be the multiplicity of $\kappa_1$. Thus
\begin{equation*}
 0<\kappa_1=\cdots=\kappa_\mu=\delta
 <\kappa_{\mu+1}\leq\cdots\leq\kappa_n.
\end{equation*}
In particular, $\mathcal W(x_0,t_0)\in\Gamma_+$.

Applying Lemma~\ref{lem:BCD} on the time slice $t=t_0$
with $\varphi\equiv\delta(t_0)$, we obtain from
\eqref{eq:BCDfirst} that
\begin{equation*}
    \nabla_i h_{j\ell}=0 \qquad (1\leq i\leq n,\ 1\leq j,\ell\leq\mu).
\end{equation*}
The Codazzi equation then gives
\begin{equation}\label{equ-dihjl}
 \nabla_i h_{1j}=\nabla_1h_{ij}=0
 \qquad(1\leq i\leq n,\ 1\leq j\leq\mu).
\end{equation}
At the contact time $t_0$, $G=G_{\phi(t_0)}$. By
Lemma~\ref{lem:Gconcavity}, $G$ is strictly increasing, concave, and inverse-concave. Hence \eqref{eq:inverseConcavity} applies with $\Phi=G$. Moreover, using $\kappa_j-\delta<\kappa_j$ for $j>\mu$ and \eqref{equ-dihjl}, we obtain

\begin{align}
 &\ddot G^{ij,k\ell}\nabla_1h_{ij}\nabla_1h_{k\ell}
       +2\sum_i\sum_{j>\mu}
          \frac{\dot G^i(\nabla_i h_{1j})^2}{\kappa_j-\delta} \nonumber\\
 \geq&\ddot G^{ij,k\ell}\nabla_1h_{ij}\nabla_1h_{k\ell}
       +2\sum_{i,j}\frac{\dot G^i}{\kappa_j}
                                  (\nabla_1h_{ij})^2\geq0. \label{eq:convexityGradient}
\end{align}

We now apply Lemma~\ref{lem:parabolicContact} with
$\psi(x,t)=\delta(t)$, which touches $\kappa_1$ from below
at $(x_0,t_0)$ on the preceding space-time neighborhood.
Since $(\partial_t-\mathcal L)\delta=-C_T\delta$,
\eqref{eq:eigenvalueContact} and
\eqref{eq:convexityGradient} give

\begin{equation*}
 -C_T\delta \geq \delta\left(1+\frac{\phi}{F^2}\right)
 \dot F^{ij}\bigl((h^2)_{ij}-g_{ij}\bigr)
 -\frac{2\phi\delta^2}{F}
 +2F
\end{equation*}
at $(x_0,t_0)$. Using \eqref{eq:quotientTraces}, $0<\delta\leq\delta_0$, we obtain
\begin{align*}
 0
 &\geq
 \delta
 \left\{
 C_T
 -(k+1)\left(1+\frac{\phi}{F^2}\right)
 -\frac{2\phi\delta_0}{F}
 \right\}
 +2F\\
 &\geq
 \delta+2F>0,
\end{align*}
which is impossible. Hence no first contact occurs, and
\begin{equation*}
 \kappa_1(\cdot,t)\geq 
 \delta(t)=\delta_0e^{-C_Tt}>0
 \qquad(0\leq t\leq T).
\end{equation*}
Since $T<T_{\max}$ is arbitrary, the solution remains strictly convex
throughout $[0,T_{\max})$. By the global convexity theorem of
do Carmo and Warner \cite{DoCarmoWarner}*{Theorem~1.1},
each $M_t$ is embedded and bounds a strictly convex domain
$\Omega_t$ contained in an open hemisphere.
\end{proof}

\subsection{Curvature pinching estimates}

\begin{lemma}[Pinching]
\label{lem:pinch}
Let
\begin{equation*}
 0<\varepsilon_0<
 \min_{M_0}\frac{\kappa_1}{H}.
\end{equation*}
Then along \eqref{eq:speed},
\begin{equation}\label{eq:pinch}
 h_{ij}\geq\varepsilon_0 H g_{ij}
 \qquad\text{for all }t\in[0,T_{\max}).
\end{equation}
In particular,
\begin{equation}\label{eq:curvatureRatio}
 \frac{\kappa_{\max}}{\kappa_{\min}}
 \leq\varepsilon_0^{-1}.
\end{equation}
\end{lemma}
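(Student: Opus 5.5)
We argue by a first-contact maximum principle for the smallest principal curvature against the test function $\psi=\varepsilon_0H$. By Lemma~\ref{prop:convexity}, $M_t$ stays strictly convex on $[0,T_{\max})$, so $\mathcal W\in\Gamma_+$ throughout. Note that $\varepsilon_0<\min_{M_0}\kappa_1/H\leq 1/n$, so $0<\varepsilon_0<1/n$. Suppose \eqref{eq:pinch} fails. Since $\kappa_1>\varepsilon_0H$ strictly on $M_0$, there is a first time $t_0>0$ and a point $x_0$ such that $\kappa_1\geq\varepsilon_0H$ on $M\times[0,t_0]$ and $\kappa_1(x_0,t_0)=\varepsilon_0H(x_0,t_0)$. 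Then $\psi=\varepsilon_0H$ touches $\kappa_1$ from below at $(x_0,t_0)$ in the sense of Lemma~\ref{lem:parabolicContact}. Let $\mu$ be the multiplicity of $\kappa_1$ there. We have $\mu<n$, since $\mu=n$ would give $\kappa_1=H/n>\varepsilon_0H$.

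\emph{Step 1 (contact inequality).} Apply Lemma~\ref{lem:parabolicContact} with $\psi=\varepsilon_0H$, and compute $(\partial_t-\mathcal L)\psi$ as $\varepsilon_0$ times \eqref{eq:HpinchEvolution}. Subtracting gives, at $(x_0,t_0)$,
\begin{align*}
0\geq{}&\ddot G^{ij,k\ell}\Bigl(\nabla_1h_{ij}\nabla_1h_{k\ell}-\varepsilon_0\sum_p\nabla_ph_{ij}\nabla_ph_{k\ell}\Bigr)
+2\sum_i\sum_{j>\mu}\frac{\dot G^i(\nabla_ih_{1j})^2}{\kappa_j-\kappa_1}\\
&+\Bigl(1+\frac\phi{F^2}\Bigr)\dot F^{ij}\bigl((h^2)_{ij}-g_{ij}\bigr)(\kappa_1-\varepsilon_0H)
+\frac{2\phi}{F}\bigl(\varepsilon_0|A|^2-\kappa_1^2\bigr)+2F(1-n\varepsilon_0).
\end{align*}
The third term vanishes because $\kappa_1=\varepsilon_0H$ at the contact point.

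\emph{Step 2 (gradient terms).} Set $T_{pij}=\nabla_ph_{ij}$, which is totally symmetric by Codazzi. Lemma~\ref{lem:BCD} with $\varphi=\varepsilon_0H$ gives $\nabla_ph_{ij}=\varepsilon_0\nabla_pH\,\delta_{ij}$ for $i,j\leq\mu$. This is exactly the null condition \eqref{eq:algebraicNullData}, since $\sum_qT_{pqq}=\nabla_pH$. We use that $G=G_{\phi(t_0)}$ is strictly increasing, concave and inverse-concave by Lemma~\ref{lem:Gconcavity}. Lemma~\ref{lem:AndrewsGradient} with $\Phi=G$ and $\varepsilon=\varepsilon_0$ then shows that the sum of the two gradient terms is nonnegative. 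Here we use $T_{i1j}=\nabla_ih_{1j}$.

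\emph{Step 3 (reaction terms).} Since all $\kappa_i\geq\kappa_1>0$, we have $|A|^2=\sum\kappa_i^2\geq\kappa_1H=\varepsilon_0H^2$. Hence $\varepsilon_0|A|^2-\kappa_1^2\geq\varepsilon_0^2H^2-\kappa_1^2=0$. Together with $\phi>0$, the term $\frac{2\phi}{F}(\varepsilon_0|A|^2-\kappa_1^2)$ is nonnegative. Finally, $2F(1-n\varepsilon_0)>0$ because $F>0$ and $\varepsilon_0<1/n$. Therefore the right-hand side in Step 1 is strictly positive, which is a contradiction. This proves $h_{ij}\geq\varepsilon_0Hg_{ij}$, and \eqref{eq:curvatureRatio} follows from $\kappa_{\max}\leq H\leq\varepsilon_0^{-1}\kappa_{\min}$.

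The main obstacle is Step 2. One must check that the gradient terms produced by the contact lemma match precisely the quadratic form of Lemma~\ref{lem:AndrewsGradient}, including the $-\varepsilon_0\sum_p$ piece coming from $\varepsilon_0\ddot G\nabla_ph\nabla^ph$ in the $H$ equation. One must also check that the null-data hypothesis holds, with the tensor $T$ and index placement consistent. It is also important that only $\phi>0$ is used, and no bound on $\phi$, so that the estimate later transfers to the polar flow.
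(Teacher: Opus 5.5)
Your proposal is correct and follows the paper's proof essentially verbatim. You use the same first-contact argument with $\psi=\varepsilon_0H$, Lemma~\ref{lem:parabolicContact} combined with \eqref{eq:HpinchEvolution}, the null data from Lemma~\ref{lem:BCD}, and Lemma~\ref{lem:AndrewsGradient} applied to $\Phi=G_{\phi(t_0)}$. The only cosmetic difference is in the reaction terms. You bound $\varepsilon_0|A|^2-\kappa_1^2\geq0$ via $|A|^2\geq\kappa_1H$, whereas the paper gets strict positivity from $|A|^2\geq H^2/n$. Either works, since $2F(1-n\varepsilon_0)>0$ already supplies the strict inequality.
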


\begin{proof}
By Lemma~\ref{prop:convexity}, the solution remains strictly
convex. Since $H\geq n\kappa_1$, the choice of $\varepsilon_0$ implies $0<\varepsilon_0<1/n$.

We first derive a consequence of
Lemma~\ref{lem:parabolicContact} that will also be used
in Lemma~\ref{lem:improvedPinch}.
Let $\varepsilon(t)\in(0,1/n)$ be smooth, and suppose that
\begin{equation*}
 \kappa_1\geq\varepsilon(t)H
\end{equation*}
holds up to a first contact at $(x_0,t_0)$, where $t_0>0$. Choose an
orthonormal principal frame at this point, and let $\mu$ be the
multiplicity of the least principal curvature. Thus
\begin{equation*}
 \kappa_1=\cdots=\kappa_\mu=\varepsilon H,
 \qquad
 \kappa_j>\kappa_1\quad(j>\mu).
\end{equation*}
Applying Lemma~\ref{lem:BCD} on the time slice $t=t_0$
with $\varphi=\varepsilon(t_0)H(\cdot,t_0)$, we obtain
from \eqref{eq:BCDfirst} that
\begin{equation}\label{eq:pinchNullData}
 \nabla_p h_{ij}=\varepsilon\nabla_pH\,\delta_{ij}
 \qquad(1\leq p \leq n,\ 1\leq i,j\leq\mu).
\end{equation}
In particular,
\begin{equation*}
 \nabla_p h_{11}=\varepsilon\nabla_pH
 =\varepsilon\sum_q\nabla_p h_{qq},
 \qquad
 \nabla_p h_{1j}=0\quad(2\leq j\leq\mu).
\end{equation*}

We next apply Lemma~\ref{lem:parabolicContact} with
$\psi(x,t)=\varepsilon(t)H(x,t)$.
Since
\begin{equation*}
 (\partial_t-\mathcal L)(\varepsilon H)
 =\varepsilon(\partial_t-\mathcal L)H+ \frac{d}{dt}\varepsilon(t) H,
\end{equation*}
\eqref{eq:eigenvalueContact} yields
\begin{align}
 \varepsilon(\partial_t-\mathcal L)H + \frac{d}{dt}\varepsilon(t) H
 \geq&
 \ddot G^{ij,k\ell}\nabla_1h_{ij}\nabla_1h_{k\ell}
 +2\sum_i\sum_{j>\mu}
   \frac{\dot G^i(\nabla_i h_{1j})^2}{\kappa_j-\kappa_1}   \nonumber\\
 &+\left(1+\frac{\phi}{F^2}\right)
   \dot F^{ij}\bigl((h^2)_{ij}-g_{ij}\bigr)\varepsilon H
 -\frac{2\phi}{F}\varepsilon^2H^2+2F.  \label{equ-1}
\end{align}
On the other hand, \eqref{eq:HpinchEvolution} gives
\begin{align}
 \varepsilon(\partial_t-\mathcal L)H
 ={}&
 \varepsilon\ddot G^{ij,k\ell}
       \nabla_p h_{ij}\nabla^p h_{k\ell}   \nonumber\\
 &+\varepsilon\left(1+\frac{\phi}{F^2}\right)
   \dot F^{ij}\bigl((h^2)_{ij}-g_{ij}\bigr)H
 -\frac{2\phi\varepsilon}{F}|A|^2+2n\varepsilon F.  \label{equ-2}
\end{align}
Substituting \eqref{equ-2} into \eqref{equ-1} cancels the terms linear in $H$ and gives
\begin{equation}\label{eq:pinchingContact}
\begin{aligned}
 0\geq{}&
 \ddot G^{ij,k\ell}
 \left(
   \nabla_1h_{ij}\nabla_1h_{k\ell}
   -\varepsilon\sum_p\nabla_ph_{ij}\nabla_ph_{k\ell}
 \right)\\
 &+2\sum_i\sum_{j>\mu}
   \frac{\dot G^i(\nabla_i h_{1j})^2}{\kappa_j-\kappa_1}\\
 &+\frac{2\phi\varepsilon}{F}
      \bigl(|A|^2-\varepsilon H^2\bigr)
   +2F(1-n\varepsilon)-\frac{d}{dt}\varepsilon(t) H.
\end{aligned}
\end{equation}

By the Codazzi equation,
\begin{equation*}
 T_{pij}:=\nabla_p h_{ij}
\end{equation*}
is a totally symmetric three-tensor, and \eqref{eq:pinchNullData} is precisely
\eqref{eq:algebraicNullData}. At the contact time $t_0$, $G=G_{\phi(t_0)}$. By Lemma~\ref{lem:Gconcavity}, $G$ is strictly increasing, concave, and inverse-concave. Applying
Lemma~\ref{lem:AndrewsGradient} with $\Phi=G$, we conclude from
\eqref{eq:pinchingGradient} that the sum of the first two terms on the right-hand side of \eqref{eq:pinchingContact} is nonnegative. Consequently, at the first contact point $(x_0,t_0)$,
\begin{equation}\label{eq:pinchingReaction}
 0\geq
 \frac{2\phi\varepsilon}{F}
 \bigl(|A|^2-\varepsilon H^2\bigr)
 +2F(1-n\varepsilon)- \frac{d}{dt}\varepsilon(t)H.
\end{equation}

We now take $\varepsilon(t)\equiv\varepsilon_0$. Then
$\frac{d}{dt}\varepsilon(t)=0$. Since $|A|^2\geq H^2/n$,
\begin{equation*}
 |A|^2-\varepsilon_0H^2
 \geq\left(\frac1n-\varepsilon_0\right)H^2>0,
\end{equation*}
and also
\begin{equation*}
 2F(1-n\varepsilon_0)>0.
\end{equation*}
Since $F>0$ and $\phi>0$, these inequalities make the
right-hand side of \eqref{eq:pinchingReaction} strictly
positive, a contradiction. Hence \eqref{eq:pinch} holds
throughout the smooth existence interval.

Finally,
\begin{equation*}
 \kappa_{\min}\geq\varepsilon_0H,
 \qquad
 \kappa_{\max}\leq H,
\end{equation*}
and therefore \eqref{eq:curvatureRatio} holds.
\end{proof}

The positive ambient-curvature term in \eqref{eq:pinchingContact}
also improves this ratio with time.

\begin{lemma}[Improved pinching]\label{lem:improvedPinch}
Let $\varepsilon_0$ be as in Lemma~\ref{lem:pinch}. For any
$0<\gamma\leq2n\varepsilon_0$, the solution along \eqref{eq:speed} satisfies
\begin{equation}\label{eq:improve}
 h_{ij}\geq
 \left[
 \frac1n-
 \left(\frac1n-\varepsilon_0\right)e^{-\gamma t}
 \right]Hg_{ij}
 \qquad\text{for all }t\in[0,T_{\max}).
\end{equation}
\end{lemma}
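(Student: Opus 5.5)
The plan is to rerun the first-contact argument from the proof of Lemma~\ref{lem:pinch}, now with the time-dependent function
\begin{equation*}
 \varepsilon(t)=\frac1n-\left(\frac1n-\varepsilon_0\right)e^{-\gamma t}.
\end{equation*}
This function is smooth and satisfies $\varepsilon(0)=\varepsilon_0$ and $\varepsilon(t)\in(0,1/n)$ for all $t$. Moreover,
\begin{equation*}
 \frac{d}{dt}\varepsilon(t)=\gamma\left(\frac1n-\varepsilon_0\right)e^{-\gamma t}=\gamma\left(\frac1n-\varepsilon(t)\right).
\end{equation*}
By the choice of $\varepsilon_0$, the strict inequality $\kappa_1>\varepsilon(0)H$ holds on $M_0$. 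If \eqref{eq:improve} fails, then, on any compact interval $[0,T]\subset[0,T_{\max})$, there is a first contact point $(x_0,t_0)$ with $t_0>0$ at which $\kappa_1=\varepsilon(t_0)H$.

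The general computation in the proof of Lemma~\ref{lem:pinch} was written for an arbitrary smooth $\varepsilon(t)\in(0,1/n)$. At such a contact point it uses Lemma~\ref{lem:BCD}, Lemma~\ref{lem:parabolicContact}, and Lemma~\ref{lem:AndrewsGradient} applied to $\Phi=G_{\phi(t_0)}$, with concavity and inverse-concavity supplied by Lemma~\ref{lem:Gconcavity}. It yields \eqref{eq:pinchingReaction}:
\begin{equation*}
 0\geq\frac{2\phi\varepsilon}{F}\bigl(|A|^2-\varepsilon H^2\bigr)+2F(1-n\varepsilon)-\gamma\left(\frac1n-\varepsilon\right)H.
\end{equation*}
The first term is positive, because $|A|^2\geq H^2/n>\varepsilon H^2$ and $\phi>0$, so we discard it. The remaining contradiction must then come from comparing $2F(1-n\varepsilon)=2n(1/n-\varepsilon)F$ with $\gamma(1/n-\varepsilon)H$.

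This comparison is the only genuinely new step. Dividing by $1/n-\varepsilon>0$, we need $2nF>\gamma H$ at the contact point. The pinching from Lemma~\ref{lem:pinch} gives $\kappa_i\geq\varepsilon_0 H$ for every $i$, and \eqref{eq:minmaxF} gives $F\geq\kappa_{\min}$, so $F\geq\varepsilon_0H$ there. Hence
\begin{equation*}
 2nF-\gamma H\geq(2n\varepsilon_0-\gamma)H\geq0
 \qquad\text{whenever }\gamma\leq2n\varepsilon_0.
\end{equation*}
Together with the discarded term, which is strictly positive, the right-hand side of \eqref{eq:pinchingReaction} is therefore strictly positive, which contradicts $0\geq\cdots$. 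This is exactly why the hypothesis reads $\gamma\leq2n\varepsilon_0$. Note that the needed bound $F\geq\varepsilon_0H$ follows from Lemma~\ref{lem:pinch} itself, which holds globally; the weaker bound $\varepsilon(t_0)\geq\varepsilon_0$ at the contact point is not enough on its own, so we use the global one.

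The main point requiring care is the treatment of the first contact with a time-dependent barrier. The function $\psi=\varepsilon(t)H$ touches $\kappa_1$ from below on a backward space-time neighborhood, as Lemma~\ref{lem:parabolicContact} requires. Because $\varepsilon(0)H<\kappa_1$ strictly on the compact $M_0$ and everything is continuous, the first contact time is well defined and positive. Once this is in place, the contradiction gives $\kappa_1\geq\varepsilon(t)H$ on $[0,T]$, and letting $T\to T_{\max}$ proves \eqref{eq:improve}.
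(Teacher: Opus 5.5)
Your proposal is correct and follows the paper's proof: it uses the same barrier $\varepsilon(t)$, reuses \eqref{eq:pinchingReaction}, discards the positive $\phi$-term, and bounds $(2nF-\gamma H)(1/n-\varepsilon)\geq(2n\varepsilon_0-\gamma)H(1/n-\varepsilon)\geq0$ via $F\geq\kappa_1\geq\varepsilon_0H$. One small correction, which does not affect the argument: your side remark that $\varepsilon(t_0)\geq\varepsilon_0$ is not enough on its own is wrong, since at the contact point $F\geq\kappa_1=\varepsilon(t_0)H\geq\varepsilon_0H$ already.
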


\begin{proof}
Set
\begin{equation*}
 \varepsilon(t)
 =
 \frac1n-\left(\frac1n-\varepsilon_0\right)e^{-\gamma t}.
\end{equation*}
Suppose that the asserted estimate fails, and let $(x_0,t_0)$ be a
first contact point, so that $\kappa_1=\varepsilon H$ at $(x_0,t_0)$. The inequality \eqref{eq:pinchingReaction}, established
for a general time-dependent $\varepsilon$, applies
at $(x_0,t_0)$.

Since
\begin{equation*}
 \varepsilon_0\leq\varepsilon (t)<\frac1n,
 \qquad
 \frac{d}{dt}\varepsilon (t)
 =
 \gamma\left(\frac1n-\varepsilon\right),
\end{equation*}
\eqref{eq:minmaxF} and Lemma~\ref{lem:pinch} give
\begin{equation*}
 F\geq\kappa_1\geq\varepsilon_0H.
\end{equation*}
Therefore
\begin{equation*}
\begin{aligned}
 2F(1-n\varepsilon)-\frac{d}{dt}\varepsilon(t) H
 &=
 (2nF-\gamma H)
 \left(\frac1n-\varepsilon\right)\\
 &\geq
 (2n\varepsilon_0-\gamma)H
 \left(\frac1n-\varepsilon\right)
 \geq0.
\end{aligned}
\end{equation*}
Moreover, using $|A|^2\geq H^2/n$,
\begin{equation*}
 \frac{2\phi\varepsilon}{F}
 \bigl(|A|^2-\varepsilon H^2\bigr)
 \geq
 \frac{2\phi\varepsilon H^2}{F}
 \left(\frac1n-\varepsilon\right)
 >0.
\end{equation*}
These inequalities contradict \eqref{eq:pinchingReaction}.
Hence \eqref{eq:improve} holds throughout the smooth
existence interval.
\end{proof}

\section{Uniform curvature estimates}\label{sec:curvature}

In this section we establish uniform two-sided curvature estimates and
the higher order estimates required for long-time existence. We first
prove an upper curvature bound by a Tso-type argument. The lower bound
then follows by applying the same estimate to the polar flow. Finally,
we derive the higher order estimates and extend the solution for all
time.

\subsection{An upper curvature bound}

The following estimate will be applied to both the original flow and the polar flow. Its proof uses only that the global coefficient is positive and spatially constant.

\begin{lemma}\label{prop:upper}
Let $M_t$, $t\in[0,T)$, be a smooth strictly convex solution of
\begin{equation*}
 \partial_tX=\left(\frac{\phi(t)}F-F\right)\nu,
 \qquad
 F=\frac{E_{k+1}}{E_k},
\end{equation*}
where $\phi(t)>0$ is spatially constant. Suppose that
$\mathcal A_m(\Omega_t)$ is preserved for some
$-1\leq m\leq n-1$. Then
\begin{equation}\label{eq:upperCurvature}
 \kappa_i\leq C,
 \qquad 1\leq i\leq n,
\end{equation}
where $C$ depends only on $M_0$, $n$, $k$, and $m$.
\end{lemma}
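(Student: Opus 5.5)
We combine the pinching estimate, the preserved quermassintegral, and a Tso-type maximum principle argument applied to $F/(u-d)$, where $u$ is a support function relative to a suitable interior point. By Lemma~\ref{lem:pinch} we have $\kappa_{\max}\leq\varepsilon_0^{-1}\kappa_{\min}$ on $[0,T)$, with $\varepsilon_0$ depending only on $M_0$. Since $\mathcal A_m(\Omega_t)=\mathcal A_m(\Omega_0)=a$, Lemma~\ref{lem:scale} gives $\rho_-(\Omega_t)\geq\rho_0:=C_1^{-1}f_m^{-1}(a)>0$. Moreover $\rho_-(\Omega_t)\leq r_a<\pi/2$. Hence for each $t$ there is a ball $B_{\rho_0}(p_t)\subset\Omega_t$. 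By \eqref{eq:supportBound}, the support function about $p_t$ satisfies $u\geq\sin\rho_0$. Because $F\leq\kappa_{\max}\leq\varepsilon_0^{-1}F$, it suffices to bound $F$ from above.

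The centre cannot be fixed for all time, so we argue on successive intervals. The speed satisfies $|G|\leq F+\phi/F$, and this is not uniformly controlled without a bound on $\phi$. To avoid using one, we choose $p=p_{t_0}$ at a time $t_0$ and use it for as long as $B_{\rho_0/2}(p)\subset\Omega_t$. On that interval $u\geq\sin(\rho_0/2)=:2d$ holds, so $u-d\geq d$. The evolution equation contains no support function or radial weight, so changing $p$ at the endpoint of an interval does not alter the flow. We then restart with a new centre and a new test function, carrying over the bound on $F$ already obtained. Summing over intervals produces no loss, because the estimate at each step depends only on the constants and on $\max F$ at the start of the step.

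On such an interval, set $W=F/(u-d)$. From \eqref{eq:Fscalar} and \eqref{eq:supportEvolution},
\begin{equation*}
 (\partial_t-\mathcal L)W
 =\frac{(\partial_t-\mathcal L)F}{u-d}
 -\frac{F(\partial_t-\mathcal L)u}{(u-d)^2}
 +\frac{2}{u-d}\dot G^{ij}\nabla_iW\nabla_ju .
\end{equation*}
At a new spatial maximum of $W$ the gradient term vanishes. The term $-\frac{2\phi}{F^3}\dot F^{ij}\nabla_iF\nabla_jF$ is nonpositive. The reaction terms are
\begin{equation*}
 \frac{(F-\phi/F)\dot F^{ij}(h^2_{ij}+g_{ij})}{u-d}
 +\frac{2F^2\cos r}{(u-d)^2}
 -\frac{Fu(1+\phi/F^2)\dot F^{ij}h^2_{ij}}{(u-d)^2}.
\end{equation*}
We collect the $\phi$-terms. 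They are $-\frac{\phi}{F}\dot F^{ij}(h^2+g)_{ij}/(u-d)-\frac{\phi u}{F(u-d)^2}\dot F^{ij}h^2_{ij}$, and both are nonpositive, so they can be discarded; this is the favourable sign that makes the bound independent of $\phi$. The remaining terms are
\begin{equation*}
 \frac{\dot F^{ij}h^2_{ij}\,F}{(u-d)^2}\bigl[(u-d)-u\bigr]
 +\frac{F\,\dot F^{ij}g_{ij}}{u-d}
 +\frac{2F^2}{(u-d)^2}.
\end{equation*}
We now apply \eqref{eq:quotientTraces}, namely $\dot F^{ij}h^2_{ij}\geq F^2$ and $\dot F^{ij}g_{ij}\leq k+1$, together with $\cos r\leq1$ and $u\leq1$. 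This gives
\begin{equation*}
 (\partial_t-\mathcal L)W\leq -dW^3+C(d)W^2+C W .
\end{equation*}
Hence $W$ is bounded by $\max\{W(t_0),C'(d,n,k)\}$, and therefore $F\leq W$ is bounded.

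The main obstacle is the interval-length argument: we must show that each interval on which $B_{\rho_0/2}(p)\subset\Omega_t$ persists has a definite length, so that the restarted bound does not deteriorate. Because of $\phi$, the inward speed is not controlled a priori. We plan to handle this without $\phi$ in one of two ways. The first is to avoid a fixed interval length altogether: whenever $u$ about $p$ reaches $2d$ at some point, we switch to $p_t$, and we observe that the maximum-principle bound for $F$ is a universal constant $\max\{\max_{M_0}F/d,\,C'\}$. Each restart therefore begins from a bound of the same form, and no accumulation occurs. Only continuity of $u$ in $t$ is needed to ensure that $u-d>0$ on each interval. The second option, if the first fails, is to use that $\Omega_t$ moves outward wherever $F<\sqrt\phi$, combined with the inradius bound. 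The curvature bound for $\kappa_i$ then follows from pinching.
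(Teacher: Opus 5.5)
There is a genuine gap in the proposal: it is the step you flag as the main obstacle, and neither of your two options closes it. Your computation of $(\partial_t-\mathcal L)W$ at a spatial maximum matches the paper's and is fine, except for one slip: since $F=W(u-d)\geq dW$, the good term is $-dF^3/(u-d)^2\leq -d^2W^3$, not $-dW^3$.

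\textbf{Option 1 fails.} At a restart time $t_1$ with a new centre $p'$, the new quantity $W'=F/(u'-d)$ is only bounded by $\max_{M_{t_1}}F/\min(u'-d)$. From the previous interval you only know $\max F(t_1)\leq(1-d)\sup W$, because the maximum of $F$ may occur where $u-d$ is close to $1-d$. The only lower bound for $u'-d$ is $\sin\rho_0-d$, which is smaller than $1-d$. So each restart can multiply the bound by $(1-d)/(\sin\rho_0-d)>1$. The plain maximum-principle bound $\max\{W(t_0),C'\}$ carries forward the data at $t_0$, so it is not a universal constant. Moreover, without a lower bound on the interval lengths, the number of restarts on $[0,T)$ is uncontrolled.

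\textbf{Option 2 is not an argument.} Where $F>\sqrt\phi$ the surface moves inward with speed up to $F$, which is exactly the quantity you are trying to bound.

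The paper supplies two ingredients that you are missing.

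\textbf{First ingredient: a definite interval length.} This comes from comparison with a shrinking geodesic sphere. If $B_{r_0}(p)\subset\Omega_{t_0}$, let $\rho$ solve $\frac{d}{dt}\rho=-\cot\rho$ with $\rho(t_0)=r_0/2$. At a first touching from inside, $\mathcal W\leq\cot\rho\operatorname{Id}$, so $F\leq\cot\rho$. Hence the normal velocity satisfies $\phi/F-F>-F\geq-\cot\rho=\frac{d}{dt}\rho$, which rules out contact. This uses only $\phi>0$, with no bound on $\phi$ or $F$. It gives $B_{r_0/4}(p)\subset\Omega_t$ and $u\geq\sin(r_0/4)$ on $[t_0,t_0+T_0]$, where $T_0=\log\bigl(\cos(r_0/4)/\cos(r_0/2)\bigr)$.

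\textbf{Second ingredient: an estimate independent of the data at the restart time.} Using the cubic term, compare $Z=F/(u-d)$ with the barrier $Z_0+\frac{2}{d\sqrt{t-t_0}}$, which is infinite at $t_0$. Applying this on $[t-T_0/2,t]$ gives $F\leq Z_0+\frac{2\sqrt2}{d\sqrt{T_0}}$ for all $t\geq T_0/2$. The ordinary maximum principle handles $[0,T_0/2]$.

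With these two pieces nothing accumulates across restarts, and the bound on $\kappa_i$ then follows from pinching as you say.
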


\begin{proof}
By Lemma~\ref{lem:pinch}, the principal curvatures have a uniform ratio
bound. Since $\mathcal A_m$ is preserved, Lemma~\ref{lem:scale} gives a
constant $r_0>0$ such that
\begin{equation*}
 \rho_-(\Omega_t)\geq r_0
\end{equation*}
throughout the smooth existence interval.

Fix $t_0$ and choose $B_{r_0}(p)\subset\Omega_{t_0}$. Let $\rho(t)$ solve
\begin{equation*}
 \frac{d}{dt}\rho=-\cot\rho,
 \qquad
 \rho(t_0)=\frac{r_0}{2}.
\end{equation*}
Since
\begin{equation*}
 \cos\rho(t)=e^{t-t_0}\cos\frac{r_0}{2},
\end{equation*}
we have $\rho(t)\geq r_0/4$ whenever
\begin{equation*}
 0\leq t-t_0\leq T_0,
 \qquad
 T_0=\log\frac{\cos(r_0/4)}{\cos(r_0/2)}>0.
\end{equation*}
The corresponding shrinking sphere remains inside $\Omega_t$ on this
interval. Indeed, at a first contact from inside,
$\mathcal W\leq\cot\rho\,\operatorname{Id}$, and hence \eqref{eq:minmaxF} gives
$F\leq\cot\rho$. Therefore
\begin{equation*}
 \frac{\phi}{F}-F>-F\geq-\cot\rho=\frac{d}{dt}\rho,
\end{equation*}
which rules out first contact. Consequently, by \eqref{eq:supportBound},
\begin{equation}\label{eq:shortsupport}
 B_{r_0/4}(p)\subset\Omega_t,
 \qquad
 u\geq\sin\frac{r_0}{4}.
\end{equation}
The center $p$ is fixed on this interval and may be chosen again when the
argument is restarted.

Following the support function argument of Tso \cite{Tso}, we set
\begin{equation*}
 d=\frac12\sin\frac{r_0}{4},
 \qquad
 Z=\frac{F}{u-d}.
\end{equation*}
By \eqref{eq:shortsupport}, $d\leq u-d\leq1$. At a spatial maximum of
$Z$,
\begin{equation*}
 \nabla F=\frac{F}{u-d}\nabla u.
\end{equation*}
Using \eqref{eq:Fscalar}, \eqref{eq:supportEvolution}, and the operator
\eqref{eq:linearizedOperator}, the gradient terms produced by
differentiating the quotient cancel, while the remaining quadratic
gradient term is nonpositive. We obtain
\begin{equation*}
\begin{aligned}
 (\partial_t-\mathcal L)Z
 &\leq
 -\frac{dF+\phi(2u-d)/F}{(u-d)^2}
       \dot F^{ij}(h^2)_{ij}\\
 &\quad
 +\frac{G}{u-d}\dot F^{ij}g_{ij}
 +\frac{2\cos r\,F^2}{(u-d)^2}.
\end{aligned}
\end{equation*}
Here we used
\begin{equation*}
 (u-d)G-uF\left(1+\frac{\phi}{F^2}\right)
 =-dF-\frac{\phi(2u-d)}F.
\end{equation*}
Since $\phi>0$, the term containing $\phi\dot F^{ij}(h^2)_{ij}$ is
nonpositive. Using $G\leq F$, $\cos r\leq1$, and
\eqref{eq:quotientTraces}, we therefore obtain, at a spatial maximum of
$Z$,
\begin{equation*}
 (\partial_t-\mathcal L)Z
 \leq-d^2Z^3+2Z^2+(k+1)Z.
\end{equation*}
Choose $Z_0>0$, depending only on $d$ and $k$, such that
\begin{equation*}
 -d^2Z^3+2Z^2+(k+1)Z
 \leq-\frac12d^2Z^3
 \qquad\text{whenever }Z\geq Z_0.
\end{equation*}
For $t>t_0$, set
\begin{equation*}
 \overline Z(t)=Z_0+\frac{2}{d\sqrt{t-t_0}}.
\end{equation*}
Since $\overline Z(t)\to+\infty$ as $t\downarrow t_0$, the maximum
principle gives
\begin{equation}\label{eq:TsoTimeBound}
 Z(\cdot,t)\leq
 Z_0+\frac{2}{d\sqrt{t-t_0}}.
\end{equation}
Indeed, at a first contact point with $Z=\overline Z>Z_0$,
\begin{align*}
 0
 &\leq(\partial_t-\mathcal L)(Z-\overline Z)\\
 &\leq-\frac12d^2\overline Z^3-\overline Z'
 \leq-\frac{3}{d(t-t_0)^{3/2}}<0,
\end{align*}
which is impossible.

For the interval starting at $t=0$, the usual maximum principle gives
\begin{equation*}
 Z\leq\max\left\{\max_{M_0}Z,Z_0\right\}.
\end{equation*}
For $t\geq T_0/2$, apply \eqref{eq:TsoTimeBound} with
$t_0=t-T_0/2$. Since $u-d\leq1$,
\begin{equation*}
 F(\cdot,t)\leq Z(\cdot,t)
 \leq Z_0+\frac{2\sqrt2}{d\sqrt{T_0}}.
\end{equation*}
Thus $F$ is uniformly bounded from above. Finally,
Lemma~\ref{lem:pinch} and \eqref{eq:minmaxF} give
\begin{equation*}
 \kappa_{\max}
 \leq H
 \leq\varepsilon_0^{-1}\kappa_{\min}
 \leq\varepsilon_0^{-1}F.
\end{equation*}
This proves the desired curvature bound.
\end{proof}

\subsection{The polar flow and the lower curvature bound}\label{subsec:polarflow}
Return to the solution of \eqref{eq:mainflow}. For each $t$, let
$M_t^*=\partial\Omega_t^*$ be the polar hypersurface. By
\eqref{eq:dualF} and the polar identities in
Section~\ref{subsec:polarity}, if $\ell=n-k$, then
\begin{equation*}
 F^*=\frac{E_\ell(\mathcal W^*)}{E_{\ell-1}(\mathcal W^*)}=\frac1F.
\end{equation*}
By \eqref{eq:polarGauss}, $Y=\nu$ and $N=X$. Differentiating
$\nu\cdot X=0$, we obtain the normal component of the polar velocity,
\begin{equation*}
 \partial_tY\cdot N
 =\partial_t\nu\cdot X
 =-\nu\cdot\partial_tX
 =F-\frac{\phi(t)}F.
\end{equation*}
We remove the tangential component by a time-dependent change of
parametrization, as in \cite{HuLi2023}*{Section~2.3}. The polar flow equations below are written in this normal parametrization.

Define
\begin{equation}\label{eq:polarTime}
 \tau(t)=\int_0^t\phi(s)\,d s,
 \qquad
 \phi^*(\tau(t))=\frac1{\phi(t)}.
\end{equation}
Since $\phi(t)>0$, this is an increasing change of time. Writing
$M_\tau^*=M_{t(\tau)}^*$ and $\Omega_\tau^*=\Omega_{t(\tau)}^*$, we have
$dt/d\tau=1/\phi(t)$ and hence
\begin{equation}\label{eq:polarflow}
 \partial_\tau Y
 =\left(\frac{\phi^*}{F^*}-F^*\right)N.
\end{equation}

We next identify the global term. At corresponding times,
\eqref{eq:polarE} gives
\begin{equation*}
 E_k(\mathcal W_t)\,d\mu_t
 =E_\ell(\mathcal W_\tau^*)\,d\mu_\tau^*.
\end{equation*}
Together with $F=1/F^*$, this yields
\begin{equation*}
 \int_{M_t}E_kF\,d\mu_t
 =\int_{M_\tau^*}\frac{E_\ell}{F^*}\,d\mu_\tau^*,
 \qquad
 \int_{M_t}\frac{E_k}{F}\,d\mu_t
 =\int_{M_\tau^*}E_\ell F^*\,d\mu_\tau^*.
\end{equation*}
Therefore
\begin{equation*}
 \phi^*(\tau)
 =\frac{\displaystyle\int_{M_\tau^*}E_\ell F^*\,d\mu_\tau^*}
        {\displaystyle\int_{M_\tau^*}E_\ell/F^*\,d\mu_\tau^*}.
\end{equation*}
By the first variation formula \eqref{eq:Avariation},
\begin{equation}\label{eq:polarPreserve}
 \frac{d}{d\tau}\mathcal A_{\ell-1}(\Omega_\tau^*)
 =d_\ell\int_{M_\tau^*}E_\ell
       \left(\frac{\phi^*}{F^*}-F^*\right)\,d\mu_\tau^*=0.
\end{equation}
Thus the polar evolution has the speed form considered in
Lemma~\ref{prop:upper}, with quotient index $\ell-1$ and preserved
quantity $\mathcal A_{\ell-1}$. These indices are independent in
that lemma.

\begin{lemma}\label{prop:two}
Along the flow \eqref{eq:mainflow}, there exists a constant $C>1$,
depending only on $M_0$, $n$, and $k$, such that
\begin{equation}\label{eq:two}
 C^{-1}\leq\kappa_i\leq C,
 \qquad
 C^{-1}\leq F\leq C,
 \qquad
 C^{-1}\leq\phi(t)\leq C
\end{equation}
for $1\leq i\leq n$. Moreover, there are constants
$r_0>0$ and $R_0<\pi/2$, depending only on $M_0$, $n$, and $k$, such that
\begin{equation}\label{eq:radiusBounds}
 \rho_-(\Omega_t),\ \rho_-(\Omega_t^*)\geq r_0,
 \qquad
 \rho_+(\Omega_t),\ \rho_+(\Omega_t^*)\leq R_0.
\end{equation}
\end{lemma}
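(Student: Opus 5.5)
The plan is to obtain the upper curvature bound directly from Lemma~\ref{prop:upper} and the lower bound by applying the same lemma to the polar flow; bounds on $F$, $\phi$ and the radii then follow. Throughout, $M_t$ is the solution of \eqref{eq:mainflow} on its maximal interval.

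\emph{Upper bound.} By Proposition~\ref{prop:moment}, $\mathcal A_{k-1}(\Omega_t)$ is preserved, and $\phi(t)>0$ is spatially constant. Lemma~\ref{prop:upper} with $m=k-1$ therefore gives $\kappa_i\le C$, with $C$ depending only on $M_0$, $n$ and $k$.

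\emph{Lower bound via the polar flow.} By Section~\ref{subsec:polarflow}, after reparametrization and the time change \eqref{eq:polarTime}, $M^*_\tau$ solves \eqref{eq:polarflow}. This has the speed form of Lemma~\ref{prop:upper}, with quotient $F^*=E_\ell/E_{\ell-1}$ (quotient index $\ell-1=n-k-1\in\{0,\dots,n-1\}$) and global term $\phi^*>0$. By \eqref{eq:polarPreserve}, it preserves $\mathcal A_{\ell-1}$ with $-1\le \ell-1\le n-1$. The polar solution is smooth and strictly convex, since $\mathcal W^*=\mathcal W^{-1}$ and the original solution is strictly convex by Lemma~\ref{prop:convexity}. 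Lemma~\ref{prop:upper} then bounds $\mathcal W^*$ above by a constant depending on $M_0^*$, and hence on $M_0$. Because $\mathcal W^*=\mathcal W^{-1}$, this gives $\kappa_i\ge C^{-1}$.

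Two points need care here. First, the pinching constant used inside Lemma~\ref{prop:upper} for the polar flow comes from applying Lemma~\ref{lem:pinch} to \eqref{eq:polarflow}. The evolution identities of Lemma~\ref{lem:evolution} and the pinching argument use only that $\phi^*>0$ is spatially constant, as the paper remarks. Second, the $\tau$-interval may be infinite or finite; the estimate is uniform in time either way, so this does not matter.

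\emph{Bounds on $F$ and $\phi$.} From \eqref{eq:minmaxF}, $\kappa_{\min}\le F\le\kappa_{\max}$, so $C^{-1}\le F\le C$. Since $\phi$ is a ratio of weighted averages of $F$ and $F^{-1}$ with respect to the positive measure $d\omega_t$, we get $\min F^2\le\phi\le\max F^2$, which gives the bound on $\phi$.

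\emph{Radius bounds.} For the inradius, the ratio bound \eqref{eq:curvatureRatio} together with preservation of $\mathcal A_{k-1}$ and Lemma~\ref{lem:scale} gives $\rho_-(\Omega_t)\ge r_0$. The same argument applied to the polar flow, using its pinching and the preserved $\mathcal A_{\ell-1}$, gives $\rho_-(\Omega^*_t)\ge r_0$. By \eqref{eq:radiusdual}, $\rho_+(\Omega_t)=\pi/2-\rho_-(\Omega_t^*)\le\pi/2-r_0=:R_0$, and symmetrically $\rho_+(\Omega_t^*)\le R_0$.

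\emph{Main obstacle.} The delicate step is checking that Lemma~\ref{prop:upper} genuinely applies to the polar evolution. One must confirm that the tangential reparametrization and the time change leave a smooth solution of exactly the form \eqref{eq:polarflow}. One must also confirm that all constants (the initial pinching, $r_0$ from Lemma~\ref{lem:scale}, and $\max_{M_0^*}Z$) are controlled by $M_0$ alone. The latter holds because $M_0^*$ is determined by $M_0$.
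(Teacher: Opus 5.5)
Your proposal is correct and follows the paper's proof essentially step for step. Both obtain the upper bound from Lemma~\ref{prop:upper} with $m=k-1$, and the lower bound by applying the same lemma to the time-changed polar flow, with quotient index and preserved index both equal to $\ell-1$, followed by $\mathcal W^*=\mathcal W^{-1}$. Both then derive the $F$ and $\phi$ bounds from \eqref{eq:minmaxF} and the weighted-average form of $\phi$, and deduce the circumradius bounds from the two inradius bounds via \eqref{eq:radiusdual}.
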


\begin{proof}
By Proposition~\ref{prop:moment}, the original flow preserves
$\mathcal A_{k-1}(\Omega_t)$. Lemma~\ref{prop:upper} therefore gives
$\mathcal W\leq C\operatorname{Id}$, while Lemmas~\ref{lem:pinch} and
\ref{lem:scale} give a uniform positive lower bound for
$\rho_-(\Omega_t)$.

For the polar flow, \eqref{eq:polarflow} and \eqref{eq:polarPreserve}
allow us to apply Lemma~\ref{prop:upper} with the quotient index
$k$ replaced by $\ell-1$ and with $m=\ell-1$. Hence
$\mathcal W^*\leq C\operatorname{Id}$. The same application of
Lemmas~\ref{lem:pinch} and \ref{lem:scale} gives a uniform positive
lower bound for $\rho_-(\Omega_\tau^*)$. Since
$\mathcal W^*=\mathcal W^{-1}$, we conclude that
\begin{equation*}
 c\operatorname{Id}\leq\mathcal W\leq C\operatorname{Id}.
\end{equation*}
By \eqref{eq:minmaxF}, this also gives $c\leq F\leq C$.

Using the representation of $\phi$ in
Section~\ref{subsec:monotonicity}, we have
\begin{equation}\label{eq:phiBounds}
 \left(\min_{M_t}F\right)^2
 \leq\phi(t)\leq
 \left(\max_{M_t}F\right)^2.
\end{equation}
Thus $\phi$ has uniform positive upper and lower bounds. In particular,
\eqref{eq:polarTime} shows that $t$ and $\tau$ are uniformly comparable.
Finally, the positive inradius bounds for the original and polar domains,
together with \eqref{eq:radiusdual}, give the asserted circumradius
bounds.
\end{proof}

\subsection{Higher regularity and long-time existence}
\label{sec:regularity}

We use radial graphs as in
\cite{AndrewsChenWei}*{Section~6.4} and
\cite{AndrewsWei}*{Section~5}. Lemma~\ref{prop:two} and
\eqref{eq:shortsupport} provide an interior point $p$ which is fixed
on each time interval of a uniform length. On such an interval,
write $M_t$ as a radial graph $r(\cdot,t)$ about $p$.
Let $D$ be the Levi-Civita connection of $\sigma$, and set
\begin{equation*}
 r_i=D_i r,\qquad r_{ij}=D_iD_jr,\qquad
 v=\sqrt{1+\frac{|Dr|_\sigma^2}{\sin^2r}}.
\end{equation*}
The standard graph formulas \cite{CGLS}*{(2.18)--(2.21)} give
\begin{equation}\label{eq:radial}
\begin{aligned}
 g_{ij}&=\sin^2r\,\sigma_{ij}+r_ir_j,\\
 h_{ij}&=\frac1v\left(-r_{ij}+\sin r\cos r\,\sigma_{ij}
                                  +2\cot r\,r_ir_j\right),\\
 u&=\frac{\sin r}{v},\qquad
 d\mu=\sin^nr\,v\,d\mu_\sigma.
\end{aligned}
\end{equation}
The Weingarten map is $h^i{}_j=g^{ik}h_{kj}$.
By \eqref{eq:supportBound}, $r$ is uniformly separated from $0$
and $\pi$, and $v$ is uniformly bounded. The curvature bounds
\eqref{eq:two} and \eqref{eq:radial} therefore give
$\|r(\cdot,t)\|_{C^2(\mathbb S^n)}\leq C$.

The radial velocity has normal component $r_t/v$, so
\eqref{eq:speed} becomes
\begin{equation}\label{eq:radialFlow}
 \partial_t r=v\left(\frac{\phi(t)}F-F\right)=-vG.
\end{equation}
In particular, $\|\partial_t r\|_{C^0(\mathbb S^n)}\leq C$.
In a local $\sigma$-orthonormal frame, the symmetric matrix
$g^{-1/2}hg^{-1/2}$ is affine in $D^2r$ when $r$ and $Dr$ are
fixed. Thus Lemma~\ref{lem:Gconcavity} implies that the right-hand
side of \eqref{eq:radialFlow} is convex in $D^2r$. The graph bounds
and \eqref{eq:two} give uniform parabolicity.

There is one difference from the scalar equations in
\cite{AndrewsChenWei}*{(6.14)} and \cite{AndrewsWei}*{(5.9)}.
In those equations the global term is additive and does not enter the
local Hessian derivative of the right-hand side. Here, by
\eqref{eq:compositeDerivatives}, the leading coefficient in
\eqref{eq:linearizedOperator} is
\begin{equation*}
 \dot G^{ij}=\left(1+\frac{\phi(t)}{F^2}\right)\dot F^{ij},
\end{equation*}
which depends explicitly on $\phi(t)$.
The bounds \eqref{eq:two} give uniform ellipticity but do not yet
provide a uniform time H\"older estimate for this coefficient.
We therefore establish that estimate before applying Schauder theory.

By \eqref{eq:Fscalar}, \eqref{eq:reciprocal}, and \eqref{eq:two},
\begin{equation*}
 (\partial_t-\mathcal L)F\leq C,
 \qquad
 (\partial_t-\mathcal L)F^{-1}\leq C,
\end{equation*}
where $\mathcal L$ is defined in \eqref{eq:linearizedOperator}.
In fixed radial coordinates, both inequalities have the same uniformly
parabolic operator with bounded drift.
The standard oscillation argument based on the weak Harnack inequality
\cite{KrylovSafonov}, applied to this pair of inequalities, gives a
uniform parabolic H\"older estimate for $F$ on smaller cylinders.
This step uses only bounded measurable coefficients and requires no
time modulus for $\phi$.
Using \eqref{eq:radial} and the concavity of $F$, we apply the elliptic
Evans--Krylov estimates on each time slice
(see Evans \cite{Evans} and Caffarelli and Cabr\'e
\cite{CaffarelliCabre}*{Theorem~8.1}) to obtain uniform spatial
$C^{2+\beta}$ bounds for $r$, for some $0<\beta<1$.
Interpolation with the bound for $\partial_t r$ yields time
H\"older continuity of $D^2r$ with exponent $\beta/(2+\beta)$.
The integral formula \eqref{eq:mainflow} gives the same estimate for
$\phi$, since its integrands depend smoothly on $r$, $Dr$, and $D^2r$,
and its denominator is uniformly positive by \eqref{eq:two} and
\eqref{eq:radial}.

With these H\"older estimates, spatial differentiation of the radial
equation and parabolic Schauder estimates \cite{Lieberman} give all
higher spatial estimates, as in \cite{AndrewsWei}*{Section~5}.
No time derivative of $\phi$ is needed in this step, since $\phi$ is
spatially constant. Successive differentiation of the graph equation
and the normalization then controls all time derivatives. In particular,
for every integer $j\geq0$,
\begin{equation}\label{eq:allest}
 \sup_{0\leq t<T_{\max}}
 \left(\sup_{M_t}|\nabla^j\mathcal W|
       +|\phi^{(j)}(t)|\right)\leq C_j,
\end{equation}
where $C_j$ depends only on $M_0$, $n$, $k$, and $j$.
All interior estimates are taken on shortened overlapping graph
intervals, with the initial interval controlled by short-time existence. Thus the constants are independent of $T_{\max}$.
If $T_{\max}<\infty$, the bounds for $\partial_t r$ in every spatial
$C^j$ norm give a smooth limit on a terminal fixed-center graph interval. By \eqref{eq:two}, the limit is strictly convex. The local theory in Section~\ref{subsec:main} extends the solution beyond $T_{\max}$, a contradiction. Hence $T_{\max}=\infty$.

\section{Convergence and the Alexandrov--Fenchel inequalities}\label{sec:convergence}

In this section we prove the smooth exponential convergence of
\eqref{eq:mainflow} and complete the proofs of
Theorems~\ref{thm:flow} and~\ref{thm:AF}. The improved pinching
estimate and the higher order estimates give exponential decay of the
trace-free second fundamental form and of the normal speed. This
yields convergence in fixed radial coordinates to a geodesic sphere.
The Alexandrov--Fenchel inequalities then follow from the monotonicity
formula, together with a short-time mean curvature flow approximation
for weakly convex domains.

\subsection{Exponential decay}

\begin{lemma}\label{lem:decay}
Along the flow \eqref{eq:mainflow}, for every integer $j\geq0$,
there exist constants $C_j,\lambda_j>0$, depending only on
$M_0$, $n$, $k$, and $j$, such that
\begin{equation}\label{eq:decay}
 \sup_{M_t}|\nabla^j\mathring{\mathcal W}|
 +\sup_{M_t}|\nabla^jG|
 \leq C_je^{-\lambda_jt}
 \qquad\text{for all }t\geq0.
\end{equation}
Moreover, there exist constants $C,\lambda>0$, depending only on
$M_0$, $n$, and $k$, such that
\begin{equation}\label{eq:FoscDecay}
 \operatorname{osc}_{M_t}F\leq Ce^{-\lambda t}
 \qquad\text{for all }t\geq0.
\end{equation}
\end{lemma}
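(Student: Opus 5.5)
The plan starts from the improved pinching estimate, Lemma~\ref{lem:improvedPinch}, with a fixed admissible rate, say $\gamma=2n\varepsilon_0$. It gives $\kappa_1\geq(\frac1n-(\frac1n-\varepsilon_0)e^{-\gamma t})H$. Since $\sum_i(\kappa_i-\kappa_1)=H-n\kappa_1\leq n(\frac1n-\varepsilon_0)e^{-\gamma t}H$ and every term is nonnegative, we obtain $\kappa_n-\kappa_1\leq Ce^{-\gamma t}H$. The bound $H\leq C$ from Lemma~\ref{prop:two} then yields $|\mathring{\mathcal W}|\leq Ce^{-\gamma t}$. This is the case $j=0$ of the first term in \eqref{eq:decay}.

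Next I would pass to derivatives. Interpolating the $C^0$ decay of $\mathring{\mathcal W}$ against the uniform bounds \eqref{eq:allest} on $\nabla^j\mathcal W$ (Gagliardo--Nirenberg on the closed manifold $M_t$, whose geometry is uniformly controlled) gives $|\nabla^j\mathring{\mathcal W}|\leq C_je^{-\lambda_jt}$ with $\lambda_j=\gamma/2^{j}$ or similar. The Codazzi equation then converts this into control of $\nabla H$. Tracing gives $\nabla^ih_{ij}=\nabla_jH$, and writing $h_{ij}=\frac Hn g_{ij}+\mathring h_{ij}$ gives $\frac{n-1}{n}\nabla_jH=\nabla^i\mathring h_{ij}$. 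Hence $|\nabla^{j+1}H|\leq C e^{-\lambda_{j+1}t}$. Since $F$ is a smooth function of $\mathcal W$ in a compact subset of $\Gamma_+$, we have $\nabla F=\dot F^{ij}\nabla h_{ij}$, and all its derivatives decay likewise. Integrating $\nabla F$ along curves, using the diameter bound from \eqref{eq:radiusBounds}, gives \eqref{eq:FoscDecay}.

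For $G=F-\phi/F$ itself, note that $\nabla^jG$ for $j\geq1$ is a smooth expression in $\nabla F,\ldots,\nabla^jF$ and $F$ with bounded coefficients, because $\phi$ is spatially constant and bounded by \eqref{eq:two}. So those derivatives decay. For $j=0$ I use the representation of $\phi$ as a quotient of weighted averages. Since $\min F^2\leq\phi\leq\max F^2$ by \eqref{eq:phiBounds}, we get $|F^2-\phi|\leq 2C\operatorname{osc}F$ pointwise. Hence $|G|=|F^2-\phi|/F\leq Ce^{-\lambda t}$ by the lower bound on $F$.

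The main obstacle is the interpolation step. It needs the uniform higher estimates \eqref{eq:allest} and a uniform interpolation constant on $M_t$. I would obtain that constant from the uniform radial-graph description in Section~\ref{sec:regularity}, working in the fixed coordinates on $\mathbb S^n$ where the metrics are uniformly equivalent. The rates $\lambda_j$ then deteriorate with $j$, which is allowed by the statement.
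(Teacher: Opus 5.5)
Your proposal is correct and follows the paper's own route: improved pinching, interpolation against \eqref{eq:allest}, the contracted Codazzi equation, a diameter bound, and the bracket $F_-^2\leq\phi\leq F_+^2$; you differ only cosmetically, by integrating $\nabla F$ directly instead of comparing $F$ with $H/n$, and by using the chain rule for $\nabla^jG$ instead of interpolation. One small correction: \eqref{eq:radiusBounds} is an extrinsic bound, whereas integrating along curves in $M_t$ needs an intrinsic diameter bound, which follows from Bonnet--Myers because the Gauss equation gives $\operatorname{Ric}\geq n-1$ (as in the paper), or from the uniform radial-graph description.
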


\begin{proof}
Choose
\begin{equation*}
 \varepsilon_0
 =\frac12\min_{M_0}\frac{\kappa_1}{H}>0
\end{equation*}
and take $\gamma=n\varepsilon_0$ in
Lemma~\ref{lem:improvedPinch}. It follows from \eqref{eq:improve} and  $\sum_{i=1}^n\left(\kappa_i-\frac{H}{n}\right)=0$ that
\begin{equation*}
-\left(\frac1n-\varepsilon_0\right)He^{-\gamma t} \leq \kappa_i-\frac{H}{n} \leq (n-1)\left(\frac1n-\varepsilon_0\right)He^{-\gamma t}.
\end{equation*}
This together with \eqref{eq:two} gives
\begin{equation}\label{eq:tracefree}
 \sup_{M_t}|\mathring{\mathcal W}|
 \leq Ce^{-\gamma t}.
\end{equation}
Combining \eqref{eq:tracefree} with the higher order estimates
\eqref{eq:allest} and interpolation, we obtain
\begin{equation}\label{eq:tracefreeHigher}
 \sup_{M_t}|\nabla^j\mathring{\mathcal W}|
 \leq C_je^{-\gamma_jt}
 \qquad(j\geq 0)
\end{equation}
for some $C_j,\gamma_j>0$.

The contracted Codazzi equation gives 
\begin{equation*}\nabla_j(\mathring{\mathcal W})^j{}_i=\frac{n-1}{n}\nabla_i H.
\end{equation*}
Hence \eqref{eq:tracefreeHigher} implies
\begin{equation*}
 \sup_{M_t}|\nabla H|
 \leq Ce^{-\gamma_1t}.
\end{equation*}
By the Gauss equation and \eqref{eq:two},
\begin{equation*}
 \operatorname{Ric}(e_i,e_i)
 =\sum_{j\neq i}(1+\kappa_i\kappa_j)
 \geq (n-1)(1+C^{-2}).
\end{equation*}
The Bonnet--Myers theorem therefore gives a uniform upper bound for
$\operatorname{diam}(M_t)$. Consequently,
\begin{equation}\label{eq:HoscDecay}
 \operatorname{osc}_{M_t}H
 \leq
 \operatorname{diam}(M_t)\sup_{M_t}|\nabla H|
 \leq Ce^{-\gamma_1t}.
\end{equation}

By the homogeneity of $F$ and \eqref{eq:minmaxF}, $F\left(\frac{H}{n}\operatorname{Id}\right)=\frac{H}{n}$. Moreover, \eqref{eq:two} implies that both $\mathcal W$ and $\frac{H}{n}\operatorname{Id}$ remain in a fixed compact subset of $\Gamma_+$. Hence the mean value theorem gives
\begin{equation}\label{eq:Fumbilic}
 \left|F-\frac{H}{n}\right|
 =
 \left|
 F(\mathcal W)
 -
 F\left(\frac{H}{n}\operatorname{Id}\right)
 \right|
 \leq
 C\left|
 \mathcal W-\frac{H}{n}\operatorname{Id}
 \right|
 =
 C|\mathring{\mathcal W}|.
\end{equation}
Combining this with \eqref{eq:tracefree} and
\eqref{eq:HoscDecay}, we obtain
\begin{equation}\label{eq:FoscEstimate}
 \operatorname{osc}_{M_t}F
 \leq Ce^{-\lambda t}
\end{equation}
for some $\lambda>0$. This proves \eqref{eq:FoscDecay}.

It remains to estimate the normal speed $G$. Set
\begin{equation*}
 F_-(t)=\min_{M_t}F,
 \qquad
 F_+(t)=\max_{M_t}F.
\end{equation*}
By \eqref{eq:phiBounds},
\begin{equation*}
 F_-^2(t)\leq\phi(t)\leq F_+^2(t).
\end{equation*}
Using \eqref{eq:two} and \eqref{eq:FoscEstimate}, we find
\begin{equation}\label{eq:speeddecay}
 |G|
 =\frac{|\phi-F^2|}{F}
 \leq
 \frac{F_+^2-F_-^2}{F_-}
 \leq Ce^{-\lambda t}.
\end{equation}
Since $\phi$ is spatially constant, the identity $G=F-\phi F^{-1}$ together with \eqref{eq:two} and \eqref{eq:allest} gives uniform
bounds for all spatial derivatives of $G$. Interpolation with
\eqref{eq:speeddecay} yields
\begin{equation*}
 \sup_{M_t}|\nabla^jG|
 \leq C_je^{-\widetilde\gamma_jt}
 \qquad(j\geq0).
\end{equation*}
Combining this with \eqref{eq:tracefreeHigher}, and decreasing the
decay rates if necessary, proves \eqref{eq:decay}.
\end{proof}

\subsection{Convergence in fixed radial coordinates}

\begin{lemma}\label{lem:limit}
Along the flow \eqref{eq:mainflow}, the hypersurfaces $M_t$ converge
smoothly and exponentially to a geodesic sphere. More precisely, there
exist a point $p\in\mathbb S^{n+1}$ and a time $t_0>0$ such that, for
$t\geq t_0$, $M_t$ is a radial graph $r(\cdot,t)$ about $p$ and
\begin{equation}\label{eq:graphConvergence}
 \|r(\cdot,t)-r_\infty\|_{C^j(\mathbb S^n)}
 \leq C_je^{-\lambda_jt}
 \qquad(j\geq0),
\end{equation}
where $r_\infty$ parametrizes the limiting geodesic sphere and
$C_j,\lambda_j>0$ depend only on $M_0$, $n$, $k$, and $j$.
\end{lemma}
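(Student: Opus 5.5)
The plan is to exploit the exponential decay of the normal speed from Lemma~\ref{lem:decay}. Integrating it in time gives a Cauchy criterion in fixed radial coordinates about a single centre, and the decay of $\mathring{\mathcal W}$ identifies the limit as a geodesic sphere.

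\emph{Step 1: a fixed centre.} Since $|\partial_tX|=|G|\leq C_0e^{-\lambda_0t}$, each point of $M_t$ moves a distance at most $C_0\lambda_0^{-1}e^{-\lambda_0 t}$ after time $t$. Hence the Hausdorff distance between $\Omega_t$ and $\Omega_s$ is at most $Ce^{-\lambda_0\min(s,t)}$, and $\Omega_t$ converges in Hausdorff distance to a convex body $\Omega_\infty$. By Lemma~\ref{prop:two}, $\rho_-(\Omega_t)\geq r_0$. I will choose $t_0$ so large that $Ce^{-\lambda_0t_0}<r_0/4$. I then fix a centre $p$ with $B_{r_0}(p)\subset\Omega_{t_0}$. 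For all $t\geq t_0$ this gives $B_{r_0/2}(p)\subset\Omega_t$: a point of $B_{r_0/2}(p)$ lies at distance at least $r_0/2$ from $M_{t_0}$, while $M_t$ stays within $r_0/4$ of it. Applying \eqref{eq:supportBound} with $\rho=r_0/2$ gives $u\geq\sin(r_0/2)$. Thus every $M_t$ with $t\geq t_0$ is a radial graph $r(\cdot,t)$ about this one $p$, and $v=\sin r/u$ is uniformly bounded. This single centre avoids the interval-by-interval centres used in Section~\ref{sec:regularity}.

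\emph{Step 2: uniform graph estimates and time integration.} With $r$ separated from $0$ and $\pi$ and $v$ bounded, the curvature bounds \eqref{eq:two} and the derivative bounds \eqref{eq:allest} translate via \eqref{eq:radial} into $\|r(\cdot,t)\|_{C^j(\mathbb S^n)}\leq C_j$ for $t\geq t_0$. By \eqref{eq:radialFlow}, $\partial_tr=-vG$. Expanding $D^j(vG)$ in terms of $\nabla^iG$ ($i\leq j$) and derivatives of $r$, Lemma~\ref{lem:decay} gives
\begin{equation*}
 \|\partial_t r(\cdot,t)\|_{C^j(\mathbb S^n)}\leq C_je^{-\lambda_jt}.
\end{equation*}
Integrating in $t$, $r(\cdot,t)$ converges in every $C^j$ norm to a smooth function $r_\infty$, with
\begin{equation*}
 \|r(\cdot,t)-r_\infty\|_{C^j}\leq\int_t^\infty C_je^{-\lambda_js}\,ds\leq C_j\lambda_j^{-1}e^{-\lambda_jt},
\end{equation*}
which is \eqref{eq:graphConvergence}.

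\emph{Step 3: the limit is a geodesic sphere.} The limit hypersurface $M_\infty$ is smooth, and by smooth convergence together with \eqref{eq:tracefree} it satisfies $\mathring{\mathcal W}\equiv0$. It is also strictly convex, since \eqref{eq:two} passes to the limit. A closed connected totally umbilic hypersurface in $\mathbb S^{n+1}$ is a geodesic sphere. Alternatively, $F$ is constant on $M_\infty$ by \eqref{eq:FoscDecay}, so Koh's theorem applies. The preserved quantity passes to the limit, giving $f_{k-1}(R_\infty)=\mathcal A_{k-1}(\Omega_0)$. Lemma~\ref{lem:inclusion} then gives $R_\infty<\pi/2$. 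The sphere need not be centred at $p$; that is why the statement writes $r_\infty$ as the function parametrizing it.

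The main obstacle is Step 1: the dependence of the constants. The time $t_0$ and the centre $p$ must be chosen using only the uniform inradius bound and the decay rate of $G$, so that $C_j$ and $\lambda_j$ depend only on $M_0$, $n$, $k$, $j$. The Hausdorff control is deduced from the pointwise speed bound via distance monotonicity along normal trajectories, and the degeneration near $r=\pi$ is excluded by \eqref{eq:supportBound}.
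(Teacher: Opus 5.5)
Your proposal is correct and follows essentially the same route as the paper. You fix a single centre $p$ at a late time $t_0$ using the uniform inradius bound and the integrability of $\sup_{M_t}|G|$, integrate the exponential decay of $\|\partial_t r\|_{C^j}$ in fixed radial coordinates, and identify the limit via $\mathring{\mathcal W}\equiv0$. The differences are cosmetic: you phrase the centre-fixing as a distance bound on $M_t$ relative to $M_{t_0}$ rather than $\int_{t_0}^\infty\sup|G|<r_0/2$, and you obtain $R_\infty<\pi/2$ from the preserved quermassintegral rather than from $R=\operatorname{arccot}\kappa$ with $\kappa>0$.
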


\begin{proof}
Let $r_0>0$ be the uniform lower bound for the inradius given by
Lemma~\ref{prop:two}. By Lemma~\ref{lem:decay},
\begin{equation*}
 \int_t^\infty\sup_{M_\zeta}|G|\,d\zeta
 \leq Ce^{-\lambda t}\longrightarrow0
 \qquad\text{as }t\to\infty.
\end{equation*}
Choose $t_0>0$ so large that
\begin{equation*}
 \int_{t_0}^\infty\sup_{M_\zeta}|G|\,d\zeta
 <\frac{r_0}{2},
\end{equation*}
and take $B_{r_0}(p)\subset\Omega_{t_0}$. Since
$\partial_tX=-G\nu$, for every $t\geq t_0$,
\begin{align*}
 \operatorname{dist}_{\mathbb S^{n+1}}(p,M_t)
 &\geq
 \operatorname{dist}_{\mathbb S^{n+1}}(p,M_{t_0})
 -
 \sup_{x\in M}
 d_{\mathbb S^{n+1}}
 \bigl(X(x,t),X(x,t_0)\bigr)\\
 &\geq
 r_0-\int_{t_0}^t\sup_{M_\zeta}|G|\,d\zeta
 >\frac{r_0}{2}.
\end{align*}
Since $p\in\Omega_{t_0}$, continuity of the flow implies that
$p\in\Omega_t$ for all $t\geq t_0$. Hence by \eqref{eq:supportBound},
\begin{equation}\label{eq:lateSupport}
 B_{r_0/2}(p)\subset\Omega_t,
 \qquad
 u\geq\sin\frac{r_0}{2}
 \qquad(t\geq t_0).
\end{equation}
Thus $M_t$ can be represented as a radial graph $r(\cdot,t)$ about
the same point $p$ for all $t\geq t_0$. By \eqref{eq:lateSupport} and the estimates established in
Section~\ref{sec:regularity}, these radial graphs satisfy uniform
$C^j$ bounds for every $j\geq0$.

By \eqref{eq:radialFlow}, Lemma~\ref{lem:decay}, and the uniform
estimates above,
\begin{equation*}
 \|\partial_t r(\cdot,t)\|_{C^j(\mathbb S^n)}
 \leq C_je^{-\lambda_jt}
 \qquad(j\geq0).
\end{equation*}
Hence, for $t_2>t_1\geq t_0$,
\begin{equation*}
\begin{aligned}
 \|r(\cdot,t_2)-r(\cdot,t_1)\|_{C^j(\mathbb S^n)}
 &\leq
 \int_{t_1}^{t_2}
 \|\partial_t r(\cdot,t)\|_{C^j(\mathbb S^n)}\,dt\\
 &\leq C_je^{-\lambda_jt_1}.
\end{aligned}
\end{equation*}
Thus $r(\cdot,t)$ converges smoothly to a function $r_\infty$.
Letting $t_2\to\infty$ in the preceding estimate gives
\eqref{eq:graphConvergence}. By the two-sided curvature estimate
\eqref{eq:two}, the limiting hypersurface $M_\infty$ is strictly
convex.

By \eqref{eq:tracefree}, $M_\infty$ is totally umbilical. More
precisely,
\begin{equation*}
 \mathcal W_\infty=\kappa\operatorname{Id}.
\end{equation*}
Since $M_\infty$ is strictly convex, $\kappa>0$. The Codazzi equation
and $n\geq2$ imply that $\kappa$ is constant. Therefore $M_\infty$
is a geodesic sphere of radius $R=\operatorname{arccot}\kappa\in(0,\pi/2)$ with some center $p_\infty\in\mathbb S^{n+1}$.
\end{proof}

\subsection{Proof of the main theorems}\label{subsec:strictAF}

\begin{proof}[Proof of Theorem~\ref{thm:flow}]
By Lemma~\ref{prop:convexity}, strict convexity is preserved along the flow \eqref{eq:speed}. Lemma~\ref{prop:two} and the higher order estimates in Section~\ref{sec:regularity} imply long-time existence. By Lemma~\ref{lem:limit}, $M_t$ converges smoothly and exponentially to a geodesic sphere $\partial B_{R_\infty}$. Since $\mathcal A_{k-1}$ is preserved,
\begin{equation*}
 f_{k-1}(R_\infty)
 =
 \lim_{t\to\infty}\mathcal A_{k-1}(\Omega_t)
 =
 \mathcal A_{k-1}(\Omega_0).
\end{equation*}
The strict monotonicity of $f_{k-1}$ gives
\begin{equation*}
 R_\infty
 =
 f_{k-1}^{-1}\bigl(\mathcal A_{k-1}(\Omega_0)\bigr).
\end{equation*}
This completes the proof.
\end{proof}

\begin{proof}[Proof of Theorem~\ref{thm:AF}]
We divide the proof into two steps.

\textbf{Step 1. The strictly convex case.}
Assume first that $\Omega$ is strictly convex. Fix
$j\in\{0,\ldots,n-1\}$ and evolve $M=\partial\Omega$ by
\eqref{eq:mainflow} with $k=j$. By Theorem~\ref{thm:flow}, the
solution exists for all time and converges smoothly to a geodesic
sphere of radius $R_\infty\in(0,\pi/2)$. Since
$\mathcal A_{j-1}$ is preserved along the flow,
\begin{equation*}
 f_{j-1}(R_\infty)=\mathcal A_{j-1}(\Omega).
\end{equation*}
On the other hand, Proposition~\ref{prop:moment} shows that
$\mathcal A_j$ is nonincreasing. Hence
\begin{equation}\label{eq:adjacent}
 \mathcal A_j(\Omega)
 \geq
 f_j(R_\infty)
 =
 f_j\circ f_{j-1}^{-1}
 \bigl(\mathcal A_{j-1}(\Omega)\bigr).
\end{equation}

For $-1\leq i\leq n-1$, set
\begin{equation*}
 r_i=f_i^{-1}\bigl(\mathcal A_i(\Omega)\bigr).
\end{equation*}
Since the functions $f_i$ are strictly increasing,
\eqref{eq:adjacent} gives
\begin{equation}\label{eq:radiiMonotonicity}
 r_{-1}\leq r_0\leq\cdots\leq r_{n-1}.
\end{equation}
Therefore, for every $-1\leq\ell<k\leq n-1$,
\begin{equation*}
 \mathcal A_k(\Omega)
 =f_k(r_k)
 \geq f_k(r_\ell)
 =
 f_k\circ f_\ell^{-1}
 \bigl(\mathcal A_\ell(\Omega)\bigr).
\end{equation*}
This proves \eqref{eq:AF} for strictly convex domains.

Suppose now that equality holds in \eqref{eq:AF} for some
$-1\leq\ell<k\leq n-1$. Since $f_k$ is strictly increasing,
$r_\ell=r_k$. By \eqref{eq:radiiMonotonicity},
\begin{equation*}
 r_\ell=r_{\ell+1}=\cdots=r_k.
\end{equation*}
Hence equality holds in each adjacent inequality between $\ell$ and
$k$. Fix such an adjacent pair and write
\begin{equation*}
 \mathcal A_j(\Omega)
 =
 f_j\circ f_{j-1}^{-1}
 \bigl(\mathcal A_{j-1}(\Omega)\bigr).
\end{equation*}
Along the flow \eqref{eq:mainflow} with $k=j$,
$\mathcal A_{j-1}$ is preserved and $\mathcal A_j$ is nonincreasing.
The limiting sphere has radius $r_{j-1}=r_j$, and therefore
\begin{equation*}
 \lim_{t\to\infty}\mathcal A_j(\Omega_t)
 =
 f_j(r_{j-1})
 =
 f_j(r_j)
 =
 \mathcal A_j(\Omega).
\end{equation*}
Thus $\mathcal A_j(\Omega_t)$ is constant. Proposition~\ref{prop:moment}
then implies that $M=M_0$ is a geodesic sphere. Hence $\Omega$ is a
geodesic ball. Conversely, geodesic balls attain equality in
\eqref{eq:AF}.

\textbf{Step 2. The weakly convex case.}
Let now $\Omega$ be weakly convex and write $M=\partial\Omega$.
We evolve $M$ by the mean curvature flow
\begin{equation*}
 \partial_tX=-H\nu,
 \qquad
 X(M,0)=M.
\end{equation*}
By the short-time existence theory \cite{HuiskenPolden}, there is a
smooth solution on $[0,T)$ for some $T>0$. Weak convexity is
preserved, and the strong maximum principle implies that $M_t$ is
strictly convex for every $t>0$, see
\cite{BryanIvaki}*{Section~3.1}. The equator case is excluded since
$M$ is contained in an open hemisphere. After decreasing $T$ if
necessary, $M_t=\partial\Omega_t$ remains in the same open hemisphere
and converges smoothly to $M$ as $t\downarrow0$. Thus
\begin{equation*}
 \mathcal A_j(\Omega_t)\longrightarrow\mathcal A_j(\Omega)
 \qquad
 (-1\leq j\leq n-1).
\end{equation*}
Applying the inequality proved in \textbf{Step~1} to $\Omega_t$ and letting
$t\downarrow0$, we obtain
\begin{equation*}
 \mathcal A_k(\Omega)
 \geq
 f_k\circ f_\ell^{-1}
 \bigl(\mathcal A_\ell(\Omega)\bigr)
\end{equation*}
for every $-1\leq\ell<k\leq n-1$. This proves \eqref{eq:AF} for
weakly convex domains.

It remains to consider the equality case. The adjacent inequalities
also pass to the limit above. As in \textbf{Step~1}, equality in
\eqref{eq:AF} for a pair $\ell<k$ forces equality in each adjacent
comparison between $\ell$ and $k$. It is therefore enough to assume
that, for some $0\leq k\leq n-1$,
\begin{equation}\label{eq:weakAdjacentEquality}
 \mathcal A_k(\Omega)
 =
 f_k\circ f_{k-1}^{-1}
 \bigl(\mathcal A_{k-1}(\Omega)\bigr).
\end{equation}

Set
\begin{equation*}
 R=f_{k-1}^{-1}\bigl(\mathcal A_{k-1}(\Omega)\bigr),
 \qquad
 c=\cot R>0,
\end{equation*}
and let
\begin{equation*}
 M_+=\{x\in M:\mathcal W(x)>0\}.
\end{equation*}
The set $M_+$ is nonempty and open. Indeed, choose
$p_0\in\mathbb S^{n+1}$ and $R_0<\pi/2$ such that
\begin{equation*}
 \Omega\subset B_{R_0}(p_0),
 \qquad
 M\cap\partial B_{R_0}(p_0)\neq\varnothing.
\end{equation*}
At a contact point the second fundamental form comparison gives
\begin{equation*}
 \mathcal W\geq\cot R_0\operatorname{Id}>0.
\end{equation*}

We claim that $M_+$ is also closed. Let
$\eta\in C_c^\infty(M_+)$ and consider the normal variation
\begin{equation*}
 X_s(x)=\exp_{X(x)}\bigl(s\eta(x)\nu(x)\bigr).
\end{equation*}
Since $\operatorname{supp}\eta$ is compactly contained in $M_+$,
for sufficiently small $|s|$ the hypersurface $M_s$ remains embedded
in the same open hemisphere and weakly convex. It therefore bounds a
weakly convex domain $\Omega_s$.

Set
\begin{equation*}
 Q(s)
 =
 \mathcal A_k(\Omega_s)
 -
 f_k\circ f_{k-1}^{-1}
 \bigl(\mathcal A_{k-1}(\Omega_s)\bigr).
\end{equation*}
By \eqref{eq:AF} and \eqref{eq:weakAdjacentEquality},
\begin{equation*}
 Q(0)=0,
 \qquad
 Q(s)\geq0
\end{equation*}
for sufficiently small $|s|$. Hence $\left.\frac{d}{ds}\right|_{s=0}Q(s)=0$. Applying \eqref{eq:Avariation} together with the chain rule yields
\begin{align*}
 0=\left.\frac{d}{ds}\right|_{s=0}Q(s) 
 =&
 d_{k+1}\int_M\eta E_{k+1}\,d\mu
 -\frac{f_k'(R)}{f_{k-1}'(R)}
 d_k\int_M\eta E_k\,d\mu \\
 =& d_{k+1}\int_M \eta\bigl(E_{k+1}-cE_k\bigr)\,d\mu,
\end{align*}
where the last equality follows from \eqref{eq:ball}, which gives
\begin{equation*}
 \frac{f_k'(R)}{f_{k-1}'(R)}=\frac{d_{k+1}}{d_k}\cot R=\frac{d_{k+1}}{d_k}c.
\end{equation*}
Since $\eta\in C_c^\infty(M_+)$ is arbitrary and $E_k>0$ on $M_+$, we conclude that
\begin{equation}\label{eq:weakEuler}
 F=\frac{E_{k+1}}{E_k}=c
 \qquad\text{on }M_+.
\end{equation}

By the Newton--MacLaurin inequalities
\eqref{eq:NewtonMaclaurin} and \eqref{eq:weakEuler},
\begin{equation*}
 E_k\geq F^k=c^k
 \qquad\text{on }M_+,
\end{equation*}
where $E_0=1$ when $k=0$. Hence the same lower bound holds on
$\overline{M_+}$. In particular, $E_k>0$ in a neighborhood of
$\overline{M_+}$, so $F=E_{k+1}/E_k$ is smooth there.
The identity \eqref{eq:weakEuler} and its first two spatial
derivatives therefore extend to $\overline{M_+}$.

Suppose that $x_0\in\overline{M_+}\setminus M_+$. Choose an
orthonormal principal frame at $x_0$ such that
\begin{equation*}
 0=\kappa_1=\cdots=\kappa_\mu
 <\kappa_{\mu+1}\leq\cdots\leq\kappa_n.
\end{equation*}
Since $F(x_0)=c>0$, we have $\mu<n$. Applying
Lemma~\ref{lem:BCD} with $\varphi\equiv0$, together with
\eqref{eq:BCDfirst} and the Codazzi equation, gives
\begin{equation}\label{eq:weakNullGradient}
 \nabla_i h_{1j}
 =
 \nabla_1h_{ij}
 =
 0
 \qquad(j\leq\mu).
\end{equation}
Moreover, \eqref{eq:BCDsecond} gives
\begin{equation}\label{eq:weakNullHessian}
 \nabla_i\nabla_i h_{11}
 \geq
 2\sum_{j>\mu}
 \frac{(\nabla_i h_{1j})^2}{\kappa_j},
\end{equation}
where there is no summation over $i$.

Since the dual function $F_*$ in \eqref{eq:dualF} is concave, the
inverse-concavity inequality \cite{Andrews}*{Lemma~3.4}, applied to
$\mathcal W+\delta\operatorname{Id}$ and followed by
$\delta\downarrow0$, gives
\begin{equation}\label{eq:weakInverseConcavity}
 \ddot F^{ij,pq}\nabla_1h_{ij}\nabla_1h_{pq}
 +
 2\sum_i\sum_{j>\mu}
 \frac{\dot F^i}{\kappa_j}
 (\nabla_1h_{ij})^2
 \geq0.
\end{equation}
Here \eqref{eq:weakNullGradient} removes the terms corresponding to
the zero eigenvalues. Moreover, $\dot F^i\geq0$ by continuity from
$\Gamma_+$.

Multiplying \eqref{eq:weakNullHessian} by $\dot F^i$, summing over
$i$, and using the Codazzi equation and
\eqref{eq:weakInverseConcavity}, we obtain
\begin{equation}\label{eq:weakQ}
 \dot F^{ij}\nabla_i\nabla_jh_{11}
 +
 \ddot F^{ij,pq}\nabla_1h_{ij}\nabla_1h_{pq}
 \geq0.
\end{equation}
On the other hand, \eqref{eq:weakEuler} gives
\begin{equation*}
 \nabla_1\nabla_1F(x_0)=0.
\end{equation*}
Since $h_{11}=(h^2)_{11}=0$ at $x_0$, the Simons identity
\eqref{eq:Simons} yields
\begin{equation*}
\begin{aligned}
 0
 &=\nabla_1\nabla_1F\\
 &=\dot F^{ij}\nabla_i\nabla_jh_{11}
   +\ddot F^{ij,pq}\nabla_1h_{ij}\nabla_1h_{pq}
   +F\\
 &\geq c>0,
\end{aligned}
\end{equation*}
a contradiction. Thus $M_+$ is closed. Since $M$ is connected,
$M_+=M$. Hence $M$ is strictly convex. The equality case proved in
\textbf{Step~1} now implies that $\Omega$ is a geodesic ball.

This completes the proof.
\end{proof}

\appendix
\section{A curvature blow-up example for the locally constrained flow}
\label{sec:counterexample}

We construct a finite-time curvature blow-up example for the locally
constrained flow of Chen, Guan, Li, and Scheuer \cite{CGLS}.
The argument uses spherical polarity. A weakly convex polar
hypersurface has a zero principal curvature with negative initial
time derivative, and small inward parallel perturbations give
strictly convex initial data whose polar evolutions lose strict
convexity in finite time.

\begin{proposition}\label{prop:CGLSblowup}
Let $n\geq2$ and $1\leq k\leq n-1$. For any
$p\in\mathbb S^{n+1}$, there exists a smooth closed rotationally
symmetric strictly convex hypersurface $M_0\subset B_{\pi/2}(p)$
enclosing $p$ such that the flow
\begin{equation}\label{eq:appDirect}
 \partial_tX=c_{n,k}(\cos r-uF)\nu,
 \qquad
 F=\frac{E_{k+1}}{E_k},
 \qquad
 c_{n,k}=\frac{n-k}{k+1},
\end{equation}
has a finite maximal smooth existence time $T$ and satisfies
\begin{equation}
 \lim_{t\uparrow T}\max_{M_t}\kappa_{\max}=+\infty.
 \notag
\end{equation}
Here $r$ and $u$ are measured from $p$.
\end{proposition}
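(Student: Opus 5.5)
We pass to the polar description, following the sketch at the start of this appendix. If $M_t$ solves \eqref{eq:appDirect}, then by \eqref{eq:polarGauss} the polar hypersurface $M_t^*$ has $Y=\nu$, $N=X$. The normal velocity of the polar flow is $\partial_tY\cdot N=-\nu\cdot\partial_tX=c_{n,k}(uF-\cos r)$. We substitute \eqref{eq:polarSupport}, namely $\cos r^*=u$ and $u^*=\cos r$, together with $F=1/F^*$, where $F^*=E_{n-k}(\mathcal W^*)/E_{n-k-1}(\mathcal W^*)$. After removing the tangential part, the polar evolution is then
\begin{equation*}
 \partial_tY=c_{n,k}\Bigl(\frac{\cos r^*}{F^*}-u^*\Bigr)N,
\end{equation*}
measured from $p^*=-p$. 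This is a smooth fully nonlinear parabolic equation in $\mathcal W^*$, and it stays parabolic whenever $E_{n-k-1}(\mathcal W^*)>0$. This holds even if one principal curvature of $M^*$ vanishes, since $n-k-1\leq n-2$ and the remaining curvatures stay positive. Hence the polar solution remains smooth through the moment its least curvature $\kappa_1^*$ hits zero. Since $\kappa_{\max}=1/\kappa^*_{\min}$, that moment is exactly when $\kappa_{\max}\to\infty$ for the original flow.

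\textbf{Step 1: a weakly convex rotationally symmetric polar hypersurface.} We build $\Sigma$, a smooth rotationally symmetric hypersurface about the axis through $p^*$. It is weakly convex, has positive curvatures away from a single latitude circle $S$ (a parallel sphere), and at $S$ the profile curvature vanishes to second order while the rotational curvatures stay positive. Such a profile can be obtained by gluing, for example a profile curve with an inflection-free flat point. Then $E_{n-k-1}>0$ and $F^*$ is smooth on $\Sigma$, and at points of $S$ the evolution of $\kappa_1^*=h^*_{11}$ follows from \eqref{eq:Simons}-type computations. The leading term is $\dot G^{ij}\nabla_i\nabla_jh_{11}$, which we arrange to be negative. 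To do so we make the profile curvature have a nondegenerate minimum zero along the axial direction, but choose the derivative structure so that the zeroth-order reaction term dominates. The key computation is to show that
\begin{equation*}
 \partial_t\kappa_1^*\big|_{t=0,\,S}<0.
\end{equation*}
We expect this to reduce, using rotational symmetry, to an ODE-type expression in the profile curvature $\kappa_1^*$, its second arclength derivative, and terms coming from $\cos r^*$ and $u^*$. The reaction term contains $-u^*\kappa_1^{*2}$-type contributions and a term of the form $-c\cos r^*\,\dot{(1/F^*)}$. If necessary, we exploit the freedom to make $\partial_s^2\kappa_1^*$ small relative to a strictly negative zeroth-order term, which comes from the ambient curvature and the support-function weights. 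This sign computation is the main obstacle, and the choice of the profile near $S$ is what must be tuned.

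\textbf{Step 2: perturbation.} Let $\Sigma_\delta$ be the inward parallel hypersurface at distance $\delta$. Spherical parallel sets add curvature, so $\Sigma_\delta$ is strictly convex, has a minimal curvature of order $\delta$ at $S$, and converges smoothly to $\Sigma$. By continuous dependence of the smooth polar flow on initial data, on a fixed time interval $[0,t_1]$ the solution from $\Sigma_\delta$ stays $C^4$-close to the one from $\Sigma$. The latter satisfies $\kappa_1^*(S,t_1)\leq -ct_1<0$. Hence for small $\delta$ the curvature $\kappa_1^*$ of the perturbed flow also becomes negative before $t_1$, so there is a first time $T\leq t_1$ at which $\min\kappa_1^*=0$.

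\textbf{Step 3: conclusion.} We take $M_0=(\Sigma_\delta)^*$. This is smooth, strictly convex, and rotationally symmetric, and for $\Sigma$ chosen near a small sphere about $p^*$ it lies in $B_{\pi/2}(p)$ and encloses $p$. On $[0,T)$ polarity is a bijection between strictly convex solutions, so $M_t=(M_t^*)^*$ solves \eqref{eq:appDirect}. Since $\kappa_{\max}(M_t)=1/\kappa_{\min}^*(M_t^*)\to\infty$ as $t\uparrow T$, the smooth existence time of $M_t$ is exactly $T<\infty$, and the curvature blows up there.
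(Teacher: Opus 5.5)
Your overall architecture matches the paper's. You use the polar flow $\partial_tY=c_{n,k}(\cos r^*q-u^*)N$ with $q=1/F^*$, which stays smooth through the loss of polar convexity because $k\geq1$. You then take an inward parallel perturbation, use continuous dependence, and conclude via $\kappa_{\max}=1/\kappa^*_{\min}$. One slip: $1/F^*=E_{n-k-1}(\mathcal W^*)/E_{n-k}(\mathcal W^*)$, so the quantity that must stay positive is $E_{n-k}$, not $E_{n-k-1}$. Parabolicity at the degenerate point also requires $\partial q/\partial\kappa_i^*<0$ for every $i$ at $(0,a,\dots,a)$, which the paper checks explicitly. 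Steps 2 and 3 are essentially fine.

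The genuine gap is Step 1. This is the heart of the construction, and the mechanism you propose cannot work. Take a point of a weakly convex polar hypersurface where $\kappa_1^*=0$ is attained, so $\nabla h_{11}=0$ by Lemma~\ref{lem:BCD}. Use $\partial_t\kappa_1^*=-\nabla_1\nabla_1\eta-\eta$, $\nabla_1\nabla_1\cos r^*=-\cos r^*$ and $\nabla_1\nabla_1u^*=0$ there. Add the contracted Simons identity for the $(-1)$-homogeneous function $q$, whose curvature term is $\dot q^{ij}h_{ij}=-q$. One obtains
\[
\partial_t\kappa_1^*=c_{n,k}\bigl(q\cos r^*+u^*\bigr)+c_{n,k}\cos r^*\,\mathcal Q-2c_{n,k}\,\nabla_1\cos r^*\,\nabla_1 q,
\]
where
\[
\mathcal Q=-\dot q^{ij}\nabla_i\nabla_jh_{11}-\ddot q^{ij,pq}\nabla_1h_{ij}\nabla_1h_{pq}\geq0 .
\]
The inequality $\mathcal Q\geq0$ follows from \eqref{eq:BCDsecond} and the inverse-concavity of $-q$, since $q(A^{-1})=F(A)$ is concave.

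Consequences for your plan:
\begin{itemize}
\item The second-order term cannot be arranged to be negative.
\item The $u^*\kappa_1^{*2}$-type terms vanish at $\kappa_1^*=0$.
\item The zeroth-order part $c_{n,k}(q\cos r^*+u^*)$ is strictly positive, so there is no negative zeroth-order term for $\partial_s^2\kappa_1^*$ to be small against.
\end{itemize}

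The only possible source of a negative sign is the first-order cross term. Along the null direction, $\cos r^*$ and $q$ (the reciprocal size of the other curvatures) must both increase, strongly enough to beat $q\cos r^*+u^*$. This is a position-dependent condition on the global shape, not something tuned at the flat point.

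The paper meets it with an open ruled region. In the chart $\Pi$, the middle part of $\partial\{\chi(x_1)+|x'|^2\leq1\}$ consists of arcs of great circles through $\pm E_1$. There $\kappa_1^*\equiv0$, $\kappa_j^*=1/\sin s$, $\cos r^*=\sin s/\sqrt2$ and $u^*=1/\sqrt2$. Hence $\mathcal Q=0$, and the formula above reproduces
\[
\partial_t\kappa_1^*=\frac{c_{n,k}}{\sqrt2}\Bigl(\frac{3(k+1)}{k}\sin^2s-\frac{k+2}{k}\Bigr).
\]
This is negative at $x_1=2$ but positive near $x_1=0$, so where the flat point sits matters. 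Your isolated flat latitude could only be made to work as a small $C^4$ perturbation of such a configuration, which requires this computation anyway.
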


\begin{proof}
Set $p^*=-p$ and $\ell=n-k$. With the polar convention
$Y=\nu$ and $N=X$, we have $\mathcal W^*=\mathcal W^{-1}$.
Writing
\begin{equation}
 q=\frac{E_{\ell-1}(\mathcal W^*)}{E_\ell(\mathcal W^*)},
 \notag
\end{equation}
the polarity identities in Section~\ref{subsec:polarity} give
\begin{equation}
 F=q,\qquad \cos r=u^*,\qquad u=\cos r^*,
 \notag
\end{equation}
where $r^*$ and $u^*$ are measured from $p^*$. Thus, up to a
tangential reparametrization, the polar flow is
\begin{equation}\label{eq:appPolar}
 \partial_tY=\eta N,
 \qquad
 \eta=c_{n,k}(\cos r^*\,q-u^*).
\end{equation}
In radial coordinates about $p^*$, this becomes
\begin{equation}\label{eq:appScalar}
 \partial_t r^*
 =c_{n,k}\left(v^*\cos r^*\,q-\sin r^*\right),
 \qquad
 v^*=\sqrt{1+\frac{|Dr^*|^2}{\sin^2r^*}}.
\end{equation}

We first construct the weakly convex initial polar hypersurface.
Choose an orthonormal basis $\{E_1,\ldots,E_{n+1}\}$ of
$T_{p^*}\mathbb S^{n+1}=(p^*)^\perp\subset\mathbb R^{n+2}$.
Using this basis, identify the tangent space with
$\mathbb R^{n+1}=\mathbb R\times\mathbb R^n$ and the vector
$\xi=\sum_{i=1}^{n+1}x_iE_i$ with its coordinates, writing
\begin{equation}
 \xi=(x_1,x'),
 \qquad
 x'=(x_2,\ldots,x_{n+1})\in\mathbb R^n.
 \notag
\end{equation}
Let $\chi:\mathbb R\to[0,\infty)$ be a smooth even convex
function such that
\begin{equation}
 \chi=0\ \text{on }[-3,3],\qquad
 \chi''>0\ \text{for }|x_1|>3,\qquad
 \chi(x_1)\to+\infty\ \text{as }|x_1|\to\infty.
 \notag
\end{equation}
Define
\begin{equation}
 \Psi(\xi)=\chi(x_1)+|x'|^2-1,
 \qquad
 D=\{\xi\in\mathbb R^{n+1}:\Psi(\xi)\leq0\}.
 \notag
\end{equation}
Then $D$ is a compact convex body containing the origin in its
interior. Its boundary is smooth because
$D\Psi=(\chi'(x_1),2x')$ does not vanish on $\{\Psi=0\}$.
Geometrically, $\partial D$ consists of a cylindrical portion
$[-3,3]\times\mathbb S^{n-1}$ with smoothly attached strictly
convex ends.

Map $D$ into $B_{\pi/2}(p^*)$ by
\begin{equation}
 \Pi(\xi)=\frac{p^*+\xi}{\sqrt{1+|\xi|^2}},
 \qquad
 \Omega_0^*=\Pi(D),
 \qquad
 M_0^*=\Pi(\partial D).
 \notag
\end{equation}
Since $\Pi$ maps line segments to geodesic segments,
$\Omega_0^*$ is geodesically convex. Set
\begin{equation}
 L=\sqrt{1+|\xi|^2},
 \qquad
 Q=\sqrt{|D\Psi|^2+(\xi\cdot D\Psi)^2}.
 \notag
\end{equation}
For $V=(V_1,V')\in T_\xi\partial D$, a direct calculation gives
\begin{equation}
 h_0^*(d\Pi V,d\Pi V)
 =\frac{\chi''(x_1)V_1^2+2|V'|^2}{LQ}.
 \notag
\end{equation}
Thus $M_0^*$ is weakly convex and has exactly one zero principal
curvature on the image of
\begin{equation}\label{eq:appRuledRegion}
 |x_1|\leq3,\qquad |x'|=1,
\end{equation}
while all principal curvatures are positive outside this region.
Moreover,
\begin{equation}
 \cos r^*=\frac1L>0,
 \qquad
 u^*=\frac{x_1\chi'(x_1)+2|x'|^2}{Q}>0.
 \notag
\end{equation}

On \eqref{eq:appRuledRegion}, write
$\xi=x_1E_1+\omega$, where $x_1=\sqrt2\cot s$ and
$\omega\in\mathbb S^{n-1}\subset
\operatorname{span}\{E_2,\ldots,E_{n+1}\}$, with
$s\in(0,\pi)$ and $|\sqrt2\cot s|\leq3$.
The position vector and outward unit normal are
\begin{equation}
 Y(s,\omega)
 =\cos s\,E_1+\frac{\sin s}{\sqrt2}(p^*+\omega),
 \qquad
 N(s,\omega)=\frac{\omega-p^*}{\sqrt2}.
 \notag
\end{equation}
Since
\begin{equation}
 |Y_s|=1,\qquad Y_{ss}=-Y,\qquad N_s=0,\qquad
 d_\omega N=\frac1{\sin s}\,d_\omega Y,
 \notag
\end{equation}
the principal curvatures and support quantities on this region are
\begin{equation}
 \begin{gathered}
 \kappa_1^*=0,\qquad
 \kappa_2^*=\cdots=\kappa_n^*=a(s)=\frac1{\sin s},\\
 \cos r^*=\frac{\sin s}{\sqrt2},\qquad u^*=\frac1{\sqrt2}.
 \end{gathered}
 \notag
\end{equation}
At a curvature vector $(0,a,\ldots,a)$, we also have
\begin{equation}\label{eq:appQvalues}
 q=\frac{k+1}{ka},\qquad
 -\dot q^1=\frac{n}{k^2a^2},\qquad
 -\dot q^j=\frac{k+1}{(n-1)ka^2},\quad 2\leq j\leq n.
\end{equation}
Since $\ell\leq n-1$, $E_\ell>0$ on $M_0^*$.
The positivity of $\cos r^*$, \eqref{eq:appQvalues}, and
ellipticity on the strictly convex part therefore give, by
compactness, an open neighborhood of the initial two-jets on
which \eqref{eq:appScalar} is uniformly parabolic. In particular,
the equation remains smooth and parabolic when the smallest
principal curvature is slightly negative.

The preceding identities give the normal speed on the ruled region:
\begin{equation}
 \eta(s)=\frac{c_{n,k}}{\sqrt2}
 \left(\frac{k+1}{k}\sin^2s-1\right).
 \notag
\end{equation}
The curves $s\mapsto Y(s,\omega)$ are unit-speed geodesics.
Since $\kappa_1^*=0$, the curvature variation formula for
\eqref{eq:appPolar} yields
\begin{equation}
 \partial_t\kappa_1^*(s,0)
 =-\eta_{ss}-\eta
 =\frac{c_{n,k}}{\sqrt2}
 \left(\frac{3(k+1)}k\sin^2s-\frac{k+2}k\right).
 \notag
\end{equation}
Choose a point $o$ in the ruled region with $x_1=2$.
Then $\sin^2s=1/3$, and hence
\begin{equation}\label{eq:appNegative}
 \partial_t\kappa_1^*(o,0)
 =-\frac{n-k}{\sqrt2\,k(k+1)}
 =-\gamma_{n,k}<0.
\end{equation}

We now perturb $M_0^*$ inward. Let $Y_0$ and $N_0$ denote its
position vector and outward unit normal, and set
\begin{equation}
 Y_\varepsilon=\cos\varepsilon\,Y_0-\sin\varepsilon\,N_0,
 \qquad
 N_\varepsilon=\sin\varepsilon\,Y_0+\cos\varepsilon\,N_0.
 \notag
\end{equation}
For sufficiently small $\varepsilon>0$, the parallel hypersurface
$M_\varepsilon^*$ is smoothly embedded, and its principal
curvatures satisfy
\begin{equation}\label{eq:appParallel}
 \kappa_{i,\varepsilon}^*
 =\frac{\kappa_i^*+\tan\varepsilon}
        {1-\kappa_i^*\tan\varepsilon}.
\end{equation}
Taking $\varepsilon$ small enough that all denominators are
positive, we obtain strict convexity and
\begin{equation}
 \min_{M_\varepsilon^*}\kappa_{\min}^*=\tan\varepsilon.
 \notag
\end{equation}
These hypersurfaces converge smoothly to $M_0^*$ as
$\varepsilon\downarrow0$.

By the uniform parabolicity established above and smooth dependence
on the initial data, the solutions of \eqref{eq:appScalar} starting
from $M_\varepsilon^*$ exist on a common interval $[0,\tau_*]$
for all sufficiently small $\varepsilon\geq0$ and converge
smoothly to the solution starting from $M_0^*$ (see
\cites{HuiskenPolden,Lunardi}). Shortening $\tau_*$ if necessary,
we may assume that $\cos r^*>0$ and $u^*>0$ throughout.

Use smoothly chosen parametrizations on a common reference manifold,
with initial embeddings $Y_\varepsilon$, and let
$\lambda_\varepsilon(o,t)$ denote the principal curvature branch
at the point labeled by $o$ that vanishes at
$(\varepsilon,t)=(0,0)$. This eigenvalue is simple there and
remains simple for small $\varepsilon$ and $t$.
By \eqref{eq:appParallel},
$\lambda_\varepsilon(o,0)=\tan\varepsilon$.
For the unperturbed initial hypersurface, this curvature vanishes
identically on the ruled region, so its spatial gradient at $o$
is zero. Thus tangential reparametrization does not alter the
initial time derivative in \eqref{eq:appNegative}, and
$\partial_t\lambda_0(o,0)=-\gamma_{n,k}$.
By continuity, there exists $\tau\in(0,\tau_*)$ such that
\begin{equation}
 \partial_t\lambda_0(o,t)\leq-\frac34\gamma_{n,k},
 \qquad 0\leq t\leq\tau.
 \notag
\end{equation}
Smooth convergence of the solutions and simplicity of the branch
give $\lambda_\varepsilon(o,\cdot)\to\lambda_0(o,\cdot)$ in
$C^1([0,\tau])$. Consequently, there exists $\varepsilon_0>0$
such that
\begin{equation}
 \sup_{0\leq t\leq\tau}
 \left|\partial_t\lambda_\varepsilon(o,t)
       -\partial_t\lambda_0(o,t)\right|
 \leq\frac14\gamma_{n,k},
 \qquad 0\leq\varepsilon\leq\varepsilon_0.
 \notag
\end{equation}
It follows that, for $0\leq t\leq\tau$ and
$0\leq\varepsilon\leq\varepsilon_0$,
\begin{equation}\label{eq:appCurvatureDecay}
 \begin{aligned}
 \partial_t\lambda_\varepsilon(o,t)
 &\leq-\frac12\gamma_{n,k},\\
 \lambda_\varepsilon(o,t)
 &\leq\tan\varepsilon-\frac{\gamma_{n,k}}2t,
 \end{aligned}
\end{equation}
where the second inequality follows by integration.

Decrease $\varepsilon_0$ further so that
$3\tan\varepsilon_0/\gamma_{n,k}<\tau$, and fix
$0<\varepsilon\leq\varepsilon_0$.
Denote the evolving polar hypersurface by $M_{\varepsilon,t}^*$
and set
$m_\varepsilon(t)=\min_{M_{\varepsilon,t}^*}\kappa_{\min}^*$.
We have $m_\varepsilon(0)=\tan\varepsilon>0$, whereas
\eqref{eq:appCurvatureDecay} gives
\begin{equation}
 m_\varepsilon\left(\frac{2\tan\varepsilon}{\gamma_{n,k}}\right)
 \leq
 \lambda_\varepsilon\left(o,
       \frac{2\tan\varepsilon}{\gamma_{n,k}}\right)
 \leq0.
 \notag
\end{equation}
Thus the first time at which strict convexity is lost,
\begin{equation}
 T_\varepsilon
 :=\inf\{t\in(0,\tau]:m_\varepsilon(t)\leq0\},
 \notag
\end{equation}
satisfies
\begin{equation}\label{eq:appCrossing}
 0<T_\varepsilon
 \leq\frac{2\tan\varepsilon}{\gamma_{n,k}}
 =\frac{2\sqrt2\,k(k+1)}{n-k}\tan\varepsilon.
\end{equation}
By continuity, $m_\varepsilon(t)>0$ for $t<T_\varepsilon$
and $m_\varepsilon(T_\varepsilon)=0$.
Since \eqref{eq:appCrossing} gives $T_\varepsilon<\tau$, the
polar solution remains smooth through this time. Moreover,
\begin{equation}
 \lambda_\varepsilon\left(o,
       \frac{3\tan\varepsilon}{\gamma_{n,k}}\right)
 \leq-\frac12\tan\varepsilon<0,
 \notag
\end{equation}
so it becomes nonconvex within the common existence interval.

For $0\leq t<T_\varepsilon$, inverse polarity yields a smooth
strictly convex solution $M_t$ of \eqref{eq:appDirect}.
The positivity of $\cos r^*$ and $u^*$ ensures that the original
domains contain $p$ and are contained in $B_{\pi/2}(p)$.
Rotational symmetry is preserved by uniqueness.
Finally, $\mathcal W=(\mathcal W^*)^{-1}$ implies
\begin{equation}
 \max_{M_t}\kappa_{\max}
 =\frac1{m_\varepsilon(t)}
 \longrightarrow+\infty
 \qquad\text{as }t\uparrow T_\varepsilon.
 \notag
\end{equation}
The original solution therefore cannot be extended smoothly
through $T_\varepsilon$, and its maximal smooth existence time
is $T=T_\varepsilon<\infty$.
\end{proof}

The restriction $k\geq1$ is essential to this construction.
For $k=0$, the polar quotient is $q=E_{n-1}/E_n$, which becomes
singular when one principal curvature vanishes. The uniformly
parabolic neighborhood used above is then no longer available.

\section*{Acknowledgements}
This work was supported by the National Key Research and Development Program of China (2021YFA1001800), National Natural Science Foundation of China (No.~12531002) and the Fundamental Research Funds for the Central Universities.  The third author was also supported by the China Postdoctoral Science Foundation (Grant No.~2025M783146).

\end{document}